\documentclass[10pt]{amsart}
\usepackage{pre}
\title[Asymptotics of Convex Core Volumes]{On the Asymptotics of Convex Core Volumes of Once-Punctured Torus Groups}
\author{Hidetoshi Masai}
\address{Humanities and Sciences/Museum Careers, Musashino Art University
Bldg. 12, 1-736 Ogawa-cho, Kodaira-shi, Tokyo 187-8505}
\email{hmasai 'at' musabi.ac.jp}
\thanks{This work was partially supported by JSPS KAKENHI Grant Number 23K03085.}

\author{Ryo Matsuda}
\address{Ritsumeikan University, Biwako-Kusatsu Campus, 1-1-1 Noji-higashi, Kusatsu, Shiga 525-8577, Japan}
\email{r-mat 'at' fc.ritsumei.ac.jp}

\begin{document}
\begin{abstract}
	We study the asymptotic behavior of the convex core volume 
    for a sequence of quasi-Fuchsian manifolds $Q(\psi^{-n}X, \psi^n X)$ 
	associated with a pseudo-Anosov mapping class $\psi$ on a once-punctured torus. 
	We prove that the volume of the convex core differs from 
	$2n$ times the volume of the mapping torus of $\psi$ 
	by at most a uniformly bounded constant.
\end{abstract}

\maketitle
\tableofcontents
\section{Introduction}
Quasi-Fuchsian manifolds are fundamental objects that bridge two-dimensional and three-dimensional hyperbolic geometry.
According to Bers' simultaneous uniformization theorem, 
for any two points $X, Y$ in the Teichm\"uller space $\T(S)$ of an orientable surface $S$ of finite type,
there exists a 3-dimensional manifold $Q(X, Y)$, called a quasi-Fuchsian manifold.  

The deformation theory of $Q(X, Y)$ with the two-dimensional hyperbolic structures $X, Y$ as parameters has been a central topic in the study of Kleinian groups and hyperbolic 3-manifolds.

The mapping class group $\mcg(S)$ acts on $\T(S)$. 
Hence, for a pseudo-Anosov map $\psi: S \to S$, 
we can consider the sequences of points $\psi^{-n}X$ and $\psi^n X$ in the Teichm\"uller space. 
By the results of Thurston and others (see, e.g. McMullen \cite{McM96}), 
the corresponding sequence of quasi-Fuchsian manifolds $Q_n := Q(\psi^{-n}X, \psi^n X)$ 
is known to converge both geometrically and algebraically to the infinite cyclic covering of the mapping torus 
$M_\psi := S \times [0, 1] / (x, 1) \sim (\psi(x), 0)$. 

In this paper, we focus on the convex core volume of such quasi-Fuchsian manifolds $(Q_n)$, 
and investigate its asymptotic behavior. 
More specifically, we treat the case where $S$ is a once-punctured torus or a four-punctured sphere. 
Let $\core(Q_n)$ denote the convex core of $Q_n$. 
We prove that the volume of $\core(Q_n)$ 
is asymptotically comparable with $2n$ times the hyperbolic volume 
$\Vol(M_\psi)$ of the mapping torus $M_\psi$. 
More precisely, our main result is stated as follows.

\begin{thm}\label{thm:main}
Let $S$ be a once-punctured torus or a four-punctured sphere and let $\psi \in \mcg$ be a pseudo-Anosov mapping class. 
For any $X \in \T$, let $Q_n = Q(\psi^{-n}X, \psi^n X)$ be the sequence of quasi-Fuchsian manifolds. 
Then, there exists a constant $C > 0$ independent of $n$ such that
\begin{equation}
   | \Vol(\core(Q_n)) - 2n \Vol(M_\psi) | = O(1) \quad (\text{as\ } n \to \infty ). \label{eq:main_result}
\end{equation}
\end{thm}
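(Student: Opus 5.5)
The plan is to use the Minsky model for once-punctured torus (and four-punctured sphere) groups: the Farey tessellation, together with the Jørgensen/Minsky description of $Q(Y,Z)$ through the sequence of Farey triangles crossed by the geodesic from the vertex nearest $\mu(Y)$ to the vertex nearest $\mu(Z)$. Up to a uniformly bilipschitz map, $\core(Q_n)$ is a union of blocks, one per triangle in the cutting sequence. The pseudo-Anosov $\psi$ acts on the Farey graph as a hyperbolic element of $\mathrm{PSL}(2,\mathbb{Z})$. Its axis crosses a periodic sequence of triangles, and the periodic word has some length $p$ per fundamental domain, namely the layered (Floyd--Hatcher) ideal triangulation of $M_\psi$. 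The geodesic from $\mu(\psi^{-n}X)$ to $\mu(\psi^n X)$ fellow-travels this axis. Hence its cutting sequence agrees with the periodic sequence over $2n-O(1)$ periods, and only $O(1)$ triangles at each end differ. The boundedness of these ends uses that $\mu(\psi^{\pm n}X)=\psi^{\pm n}\mu(X)$, so each end is a $\psi^{\pm n}$-translate of a fixed configuration.

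Bilipschitz models alone only give volume up to multiplicative error, so the key step is to upgrade this to an additive error. I would do this by a geometric limit argument. Fix a fundamental block $B$ of the periodic layered structure, i.e.\ the part of the model between the tetrahedra corresponding to triangles $kp$ and $(k+1)p$ of the cutting sequence. Consider the block $B_{n,k}$ in $\core(Q_n)$ for $|k| \le n-R$. Translating by $\psi^{-k}$, the manifolds $(Q_n,\text{basepoint in }B_{n,k})$ are isometric to $Q(\psi^{-n-k}X,\psi^{n-k}X)$, and these converge geometrically to $\tilde M_\psi$, the infinite cyclic cover, as $\min(n\pm k)\to\infty$ (McMullen). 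Pleated or simplicial realizations of the separating curves, i.e.\ the Farey vertices, converge too. Therefore the volume of the region between the realized surfaces at levels $k$ and $k+1$ tends to $\Vol(M_\psi)$.

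To make the error summable rather than merely $o(1)$ per block, I need exponential convergence. The block at distance $d=\min(n-k,n+k)$ from the ends should have volume within $Ce^{-cd}$ of $\Vol(M_\psi)$. To prove this I would use either McMullen's exponential contraction of the skinning map / the quasi-isometric rigidity estimates (Brock--Bromberg drilling-type estimates), or the Akiyoshi--Sakuma--Wada--Yamashita description of once-punctured torus groups via Jørgensen's ideal triangulation. The latter is the route I would try first. In that description $\core(Q_n)$ is decomposed into ideal tetrahedra (minus two end pieces), whose shape parameters solve the edge equations. These form a linear recursion whose deviation from the periodic solution decays geometrically away from the ends, by hyperbolicity of the linearized monodromy. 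Summing $\sum_k Ce^{-c\min(n-k,n+k)}=O(1)$ gives $\sum_{\text{middle}}\Vol(B_{n,k}) = (2n-2R)\Vol(M_\psi)+O(1)$.

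Finally, the end pieces consist of $O(1)$ tetrahedra plus the regions between the outermost tetrahedra and $\partial\core(Q_n)$. These have uniformly bounded volume, because their images under $\psi^{\mp n}$ converge to fixed pieces of the geometric limits $Q(X,\cdot)$-type degenerate groups, or simply because each is a bounded union of model blocks. Adding everything gives $|\Vol(\core(Q_n))-2n\Vol(M_\psi)|=O(1)$. The main obstacle is the exponential decay in the middle step. The linear volume count and the bounded ends are routine; without the exponential rate one only gets $o(n)$.
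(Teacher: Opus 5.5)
\textbf{The gap.} Everything in your argument rests on the estimate $|\Vol(B_{n,k})-\Vol(M_\psi)|\le Ce^{-cd}$, and neither of your proposed routes establishes it.

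Route (a) does not apply here. The McMullen and Brock--Bromberg decay estimates (skinning contraction plus exponential decay of the quasiconformal strain into the core) rely on a compact convex core, i.e.\ bounded-below injectivity radius. Here the rank-one cusp runs through every block, and no version with uniform control inside the cusp is available. This is exactly the obstruction the paper is designed to avoid.

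Route (b) is only a heuristic as written. The edge equations and critical-point equations for the shapes form a \emph{nonlinear} recursion, not a linear one. To get geometric decay you would need three things, none of which you supply:
\begin{enumerate}
\item uniform a priori closeness of \emph{all} middle shapes to the periodic ones, not just on a fixed finite window as in Theorem~\ref{thm:convTri};
\item a proof that the equations define a nondegenerate recursion whose linearized monodromy at the periodic solution is hyperbolic;
\item a nonlinear stability lemma for long orbit segments near a hyperbolic periodic orbit.
\end{enumerate}

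Your treatment of the ends is also inaccurate. The tetrahedra are indexed by the Farey path between $[\pl_{-,n}]$ and $[\pl_{+,n}]$, not Minsky's shortest-curve vertices. That path is infinite whenever a pleating lamination is irrational, so ``$O(1)$ tetrahedra at each end'' is false. The paper bounds the tails instead with Guéritaud's estimate $\sum_{j\le i}\Vol(\Delta_j)\le C\phi_i^-$ (and its mirror), combined with uniform bounds on $\phi^-_{-n'l}(n)$ and $\phi^+_{n'l}(n)$ from Lemma~\ref{lem:boundary_intersection_bound}.

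\textbf{The missing idea.} No convergence rate is needed, so your claim that without one you only get $o(n)$ is wrong in this setting. Guéritaud characterizes the geometric triangulation of $\core(Q_n)$ as the maximizer of a concave volume functional over $W(Q_n)$, and that of $M_\psi$ as the maximizer over $W(\psi)$. This turns the problem into two one-sided comparisons.

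For the lower bound, the paper uses exponential growth of $\phi_i^\pm$ and $|\nabla\phi_i^\pm|$ toward the middle (Corollary~\ref{coro:exponential_growth_of_intersection_numbers} and Lemma~\ref{lem:gradient_exponential_growth}). This is a Farey-combinatorial and dynamical fact, not a rate of geometric convergence. It shows the $\phi$-constraints are inactive in the middle. So the periodic weights of $M_\psi$ on $2n-O(1)$ periods, cut off near the ends by sequences $f_\eta(\phi_i^\pm)$, form an admissible competitor. Maximality then gives $\Vol(\core(Q_n))\ge 2n\Vol(M_\psi)-O(1)$.

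For the upper bound, the true weights on each complete period block are compared with the periodic maximization problem, so each block contributes at most $\Vol(M_\psi)$. The tails are bounded as above.

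If route (b) could be completed it would prove more than the statement, namely exponential convergence of the tetrahedral shapes. The paper explicitly notes it does not obtain this. That is a harder theorem than the one at hand.
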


When $S$ is an orientable {\em closed} surface, 
the comparison result \eqref{eq:main_result} has already been proved 
by Kojima--McShane \cite{KM18} and Brock--Bromberg \cite{BB16}. 
Although Theorem~\ref{thm:main} may be viewed as a punctured analogue of the results of Kojima--McShane and Brock--Bromberg, our proof should not be regarded as an extension of the methods of \cite{KM18,BB16}.

The geometric inflexibility arguments used in  \cite{KM18, BB16}
rely on the compactness of the convex core.
When $S$ has a puncture and the convex core loses compactness, 
one standard approach is to truncate the portion corresponding to the cusp using a horoball, 
dividing the manifold into a thick part and a thin part to evaluate the geometry. 
Unfortunately, it is difficult to uniformly control the variation of the geometric structure inside the cusp neighborhood as the number of iterations $n$ increases, 
and no geometric inflexibility theorem sufficient for our purposes is presently available in the cusped setting.

Therefore, to overcome this difficulty, we focus on the once-punctured torus (the four-punctured sphere case follows by commensurability).
The primary reason for selecting the once-punctured torus is that it has a balance between generality and tractability:
while preserving the essential features of general non-compact surfaces, 
its combinatorial and topological structures are relatively simple,
permitting concrete analysis accompanied by some explicit combinatorial descriptions.
Historically, Minsky's foundational work on the once-punctured torus \cite{Min99} 
served as the starting point for the resolution of the ending lamination conjecture, which was completed by Brock, Canary, and Minsky \cite{BCM12}. 

In this paper, instead of extending geometric inflexibility to the cusped setting, our approach is based on Gu\'eritaud's canonical triangulations of quasi-Fuchsian once-punctured torus groups \cite{Gue06,Gue09}.
With the aid of the work of Keen--Series \cite{KeeSer04} on the pleating coordinates of quasi-Fuchsian once-punctured torus groups, we first observe that large portions of Gu\'eritaud's triangulations of the convex core of $Q_n$ are combinatorially identical to those of the mapping torus $M_\psi$ (Theorem \ref{thm:combinatorial_inflexibility}).
This {\em combinatorial inflexibility} plays the role of geometric inflexibility in the cusped setting.

A novel aspect of Gu\'eritaud's method is that the geometric realization of the convex core 
is uniquely characterized by a ``volume maximization principle'' 
within a parameter space of dihedral angles determined by the combinatorics of the Farey tessellation. 
We obtain upper and lower bounds for the convex core volume by evaluating 
how the geometric realization corresponding to approximately $2n$ copies 
of the fundamental domain of $M_\psi$ can be constructively embedded, and vice versa.
Our approach is independent of the compactness arguments used in \cite{BB16, KM18}, and
gives a new understanding of the asymptotic behavior of the volume of the convex core in the punctured torus case.

\section*{Acknowledgements}
The authors would like to express their sincere gratitude to  
Hirotaka Akiyoshi, 
Sadayoshi Kojima, 
Greg McShane, 
Makoto Sakuma, and
Yasushi Yamashita
for their invaluable comments, insightful discussions, and constant support throughout the preparation of this manuscript.

\section{Preliminaries} 

\subsection{Notations}

\renewcommand{\arraystretch}{1.3}

\begin{longtable}{r @{\quad : \quad} p{10cm}}
    $\To, \Tf$
    & Teichm\"uller spaces of $S_{1,1}$ and $S_{0,4}$ respectively. \\

    $S$
    & Either $S_{1,1}$ or $S_{0,4}$, unless otherwise stated. \\

    $\psi$
    & A pseudo-Anosov map $\psi: S \to S$ with stable and unstable laminations
      $\Lambda_{\pm}$. \\

    $[\mu]$
    & The projective class of a measured lamination $\mu$. \\

    $M_\psi$
    & The mapping torus defined as
      $S \times [0,1]/(x,1)\sim(\psi(x),0)$. \\

    \noalign{\smallskip}
    \hline
    \noalign{\smallskip}

    $\QF(S)$
    & The space of quasi-Fuchsian representations. \\

    $Q(X,Y)$
    & The quasi-Fuchsian manifold determined by $X,Y\in\T$. \\

    $Q_{n,m}$
    & The manifold $Q(\psi^n(X),\psi^m(X))$, allowing
      $n,m=\pm\infty$. \\

    $\operatorname{core}(Q)$
    & The convex core of the quasi-Fuchsian manifold $Q$. \\

    $\pl_{\pm}(Q)$
    & The pleating laminations
      (see Figure~\ref{fig:pleatinglami}). \\

    $Q_n$
    & The manifold
      $Q(\psi^{-n}(X),\psi^n(X))$. \\

    $\pl_{\pm,n}$
    & The pleating laminations of
      $Q_n=Q(\psi^{-n}(X),\psi^n(X))$. \\

    \noalign{\smallskip}
    \hline
    \noalign{\smallskip}

    $\ks$
    & The Keen--Series pleating coordinates, given by the map: \\

    \multicolumn{1}{r}{}
    & $\QF\setminus\Delta
       \longrightarrow
       (\pml\times\mathbb{R}_{>0})^2$. \\

    $F$
    & The map $F$ from the real line $\mathbb{R}$ to the Farey tree. \\

    $[\mu,\nu]$
    & The geodesic in the Farey tree connecting
      $F([\mu])$ and $F([\nu])$.
\end{longtable}

\subsection{Teichm\"uller space and the mapping class group}\label{sec.TMB}

Let $S = S_{g,n}$ be an oriented topological surface of finite type with genus $g$ and $n$ punctures. 
Throughout this paper, we are mainly concerned with the surfaces $S_{1,1}$ and $S_{0,4}$. 
A marked Riemann surface is a pair $(X, f)$, 
where $X$ is a Riemann surface and $f \colon S \to X$ is an orientation-preserving homeomorphism. 
The Teichm\"uller space $\T(S)$ is defined as the set of equivalence classes $[X, f]$, 
where two pairs $(X_1, f_1)$ and $(X_2, f_2)$ are equivalent 
if there exists a biholomorphism $h \colon X_1 \to X_2$ 
such that $h\circ f_1$ is isotopic to $f_2$ relative to the punctures. 
By abuse of notation, 
we often write the equivalence class $[X, f]$ simply as $X$ 
when the marking is understood from the context.

The mapping class group $\mcg(S) = \mathrm{Homeo}^+(S) / \mathrm{Homeo}_0(S)$ 
acts naturally on $\T(S)$ by changing the marking. 
For an element $[\psi] \in \mcg(S)$ represented 
by a homeomorphism $\psi \colon S \to S$, 
its action on $\T(S)$ is given by $[\psi] \cdot [X, f] = [X, f \circ \psi^{-1}]$.

\subsection{Measured laminations and the Thurston compactification}

A measured lamination on $S$ is defined as a geodesic lamination equipped with a transverse measure. 
We denote the space of all measured laminations on $S$ by $\ml(S)$. 
By projectivizing the space of measured laminations, 
we obtain the space of projective measured laminations, denoted by 
$\pml(S) = (\ml(S) \setminus \{0\}) / \mathbb{R}_{>0}$. 
For a measured lamination $\mu \in \ml(S)$, we denote its projective class by $[\mu] \in \pml(S)$.

Using the hyperbolic length functions, 
Thurston compactified the Teichm\"uller space $\T(S)$ into a closed ball, 
and its boundary $\partial \T(S)$ is canonically identified with $\pml(S)$. 
We denote the Thurston compactification by 
\[
    \overline{\T(S)}^{\mathrm{Th}} = \T(S) \cup \pml(S).
\]
The action of $\mcg(S)$ on $\T(S)$ extends continuously to homeomorphisms of $\overline{\T(S)}^{\mathrm{Th}}$.

For the once-punctured torus, 
$\pml(S_{1,1})$ is naturally identified 
with the extended real line $\hat{\mathbb{R}} = \mathbb{R} \cup \{\infty\} = \partial_\infty \mathbb{H}$, 
which can be identified with the ideal boundary of the Farey tessellation. 
Under this identification, 
the rational points $\hat{\mathbb{Q}}$ correspond precisely to simple closed curves, 
while the irrational points correspond to uniquely ergodic laminations.

An element $\psi \in \mcg(S)$ is called a \emph{pseudo-Anosov map} 
if there exist a real number $\dil(\psi) > 1$, called the dilatation or stretch factor, 
and a pair of transverse measured laminations $\Lambda_+, \Lambda_- \in \ml(S)$, 
called the stable and unstable laminations, respectively, such that 
\[
    \psi(\Lambda_+) = \dil(\psi) \cdot \Lambda_+ \quad \text{and} \quad \psi(\Lambda_-) = \dil(\psi)^{-1} \cdot \Lambda_-. 
\]
Because $\Lambda_\pm$ are uniquely ergodic, 
their projective classes $[\Lambda_+]$ and $[\Lambda_-]$ 
correspond to irrational numbers in $\partial_\infty \mathbb{H} = \hat{\mathbb{R}}$. 
Under the continuous extension of the action of $\psi$ on $\overline{\T(S)}^{\mathrm{Th}}$, 
the points $[\Lambda_+]$ and $[\Lambda_-]$ are attracting and repelling fixed points, respectively. 
Furthermore, the action of $\psi$ induces a hyperbolic translation on the Farey tree, 
and its translation axis is precisely 
the bi-infinite geodesic connecting the repelling and attracting fixed points, which we denote by $[\Lambda_-, \Lambda_+]$.

It is well known that the action of the mapping class group on $\To$ is equivalent to the standard action of $\mathrm{PSL}(2, \mathbb Z)$ on the upper half-plane. 

\begin{thm}[Equivariant identification of $\overline{\To}^{\mathrm{Th}}$ and $\mathbb H \cup \hat{\mathbb R}$] \label{thm:id_t_to_h}
    There exist 
    a homeomorphism $h \colon \overline{\To}^{\mathrm{Th}} \to \mathbb H \cup \hat {\mathbb R}$ 
    and an isomorphism $H \colon \mcg(S_{1,1}) \to \mathrm{PSL}(2, \mathbb Z)$ such that 
	$h \circ \psi = H(\psi) \circ h$ for all $\psi \in \mcg(S_{1,1})$. 
\end{thm}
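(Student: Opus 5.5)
The plan is to build the identification from the flat torus picture. Every hyperbolic structure on $S_{1,1}$ is determined by its conformal structure. Filling in the puncture gives a conformal structure on the closed torus $T^2 = \mathbb R^2/\mathbb Z^2$, and conversely the puncture can be placed at the image of the origin. Concretely, given a marking $f\colon S_{1,1}\to X$, we fill in the puncture to get a marked flat torus $\mathbb C/(\mathbb Z\omega_1+\mathbb Z\omega_2)$, where $\omega_1,\omega_2$ are the periods of the holomorphic $1$-form along the marked curves $f(a), f(b)$ corresponding to the standard basis of $H_1(S_{1,1})\cong\mathbb Z^2$. We set $h(X)=\omega_2/\omega_1\in\mathbb H$, or its conjugate depending on orientation conventions. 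Since the punctured torus has a unique conformal involution structure and the filling is canonical, this is the classical bijection $\To\cong\mathbb H$. It is well defined on isotopy classes relative to the puncture because isotopic markings induce the same map on $H_1$. It is a homeomorphism, for instance via Fenchel--Nielsen coordinates or directly via Teichm\"uller theory of tori.

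Next, for the isomorphism $H$ we use the standard fact that the action of $\mcg(S_{1,1})$ on $H_1(S_{1,1};\mathbb Z)$ gives an isomorphism $\mcg(S_{1,1})\cong\mathrm{SL}(2,\mathbb Z)$, by Dehn--Nielsen--Baer together with the fact that every automorphism of $F_2$ preserving the commutator's conjugacy class up to orientation is detected on homology. The Teichm\"uller action factors through $\mathrm{PSL}(2,\mathbb Z)$, since the elliptic involution $-I$ acts trivially on $\To$. With the convention $[\psi]\cdot[X,f]=[X,f\circ\psi^{-1}]$, a direct computation shows that the periods transform by the matrix of $\psi^{-1}$ or its transpose. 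We therefore define $H(\psi)$ as the corresponding Möbius transformation, precomposed with the appropriate transpose or inverse so that $h\circ\psi=H(\psi)\circ h$ holds on $\To$. Strictly, $H$ is an isomorphism onto $\mathrm{PSL}(2,\mathbb Z)$ only after quotienting by the hyperelliptic involution. Following the paper's convention we read $\mcg$ as acting effectively, i.e.\ modulo $\pm I$.

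The boundary extension is the step I expect to be the main obstacle, namely checking that $h$ extends to a homeomorphism from the Thurston boundary $\pml(S_{1,1})$ to $\hat{\mathbb R}$. A measured lamination on $S_{1,1}$ is determined by its homology class up to sign. A simple closed curve of slope $p/q$ corresponds to $\pm(q,p)$, and irrational laminations correspond to irrational slopes. This gives the homeomorphism $\pml\to\hat{\mathbb R}$, which is equivariant because both sides transform by the homology action. To prove continuity of the combined map at the boundary, I would first compare hyperbolic lengths on $X$ with flat lengths on the filled-in torus. For a point $\tau\in\mathbb H$, the flat length of the class $(q,p)$ is $|q+p\tau|/\sqrt{\operatorname{Im}\tau}$. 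Up to bounded multiplicative error on thick parts, and with logarithmic corrections controlled by the collar lemma, Thurston convergence $X_k\to[\mu]$ means that the length functions, projectively normalized, converge to the intersection $i(\cdot,\mu)$. In $\mathbb H$, the corresponding statement is that $\tau_k\to x\in\hat{\mathbb R}$ makes $|q+p\tau_k|/\sqrt{\operatorname{Im}\tau_k}$, suitably rescaled, converge projectively to $|q+px|$, which is precisely $i((q,p),\mu_x)$. Since both compactifications are closed discs and the map is a continuous bijection from a compact space to a Hausdorff space, it is a homeomorphism. Equivariance on the boundary then follows by continuity from equivariance in the interior.
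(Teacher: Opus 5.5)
The paper does not prove this statement; it quotes it as well known, so your argument has to stand on its own. Your interior identification via periods of the filled-in torus and your construction of $H$ are standard and fine. You are also right that the statement is literally false as written. $\mcg(S_{1,1})\cong\mathrm{SL}(2,\mathbb Z)$, whose center $\{\pm I\}$ (the elliptic involution) acts trivially on $\To$. So $H$ can only be a surjection onto $\mathrm{PSL}(2,\mathbb Z)$ with kernel $\{\pm I\}$; the two groups are not isomorphic, since $\mathrm{PSL}(2,\mathbb Z)$ has trivial center.

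The genuine gap is the boundary step. It is the real content of the theorem, and you only gesture at it.

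\emph{Why your comparison does not suffice.} The hyperbolic and flat length functions are not uniformly comparable over all curves in the thin part. Suppose $\ell_X(c)=\epsilon$ is small. Maskit's comparison with extremal length, together with $\mathrm{EL}_X(c)=\ell_{\mathrm{flat}}(c)^2$ at unit area, gives $\ell_{\mathrm{flat}}(c)\asymp\sqrt{\epsilon}$. A curve crossing $c$, however, pays about $2\log(1/\epsilon)$ per crossing hyperbolically but $\asymp\epsilon^{-1/2}$ in the flat metric. So the ratio $\ell_{\mathrm{hyp}}/\ell_{\mathrm{flat}}$ varies from curve to curve by a factor of order $\log(1/\epsilon)$, which is unbounded along degenerating sequences. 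A curve-dependent distortion of this size does not imply that the two normalized length functions have the same projective limit.

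\emph{How to close the gap.} Given $\tau_k\to x$, use compactness of $\overline{\To}^{\mathrm{Th}}$ to pass to a subsequence with $X_k\to[\nu]$. It suffices to show $i(\nu,\mu_x)=0$, which on $S_{1,1}$ forces $[\nu]=[\mu_x]$.
\begin{itemize}
\item If the systole stays bounded below, write $X_k=g_kY_k$ with $Y_k$ in a fixed compact set. By $\mcg$-invariance, the two length functions are then comparable up to one uniform multiplicative constant on all curves at once. Hence $i(\cdot,\nu)\le A r\, i(\cdot,\mu_x)$ for some $r<\infty$, and evaluating at $\mu_x$ gives $i(\nu,\mu_x)=0$.
\item If some curve $c_k$ has $\epsilon_k=\ell_{X_k}(c_k)\to 0$, you need an explicit collar estimate. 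For a dual curve $b_k$ of bounded twist it reads $\ell_{X_k}(\gamma)=2\,i(\gamma,c_k)\log(1/\epsilon_k)+\epsilon_k\, i(\gamma,b_k)+O(i(\gamma,c_k))$, and you need the analogous flat formula. Then check that in both metrics the projective limit is $\lim_k[c_k]$, using $i(c_k,b_k)=1$ in the case where the twist term dominates.
\end{itemize}

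\emph{A convention slip.} With your own length formula, $|q+px|=i\bigl((q,p),\pm(-x,1)\bigr)$, which is the lamination of slope $-1/x$, not slope $x$. So the plain slope map on $\pml$ is not the continuous extension of $h$; it differs from it by $x\mapsto -1/x$. Define $h$ on $\pml$ as the actual extension. Boundary equivariance then follows by continuity, exactly as in your last sentence.
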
 

\begin{coro}[North--South dynamics of pseudo-Anosov maps] \label{coro:NSdym}
Let $\psi \in \mcg(S)$ be a pseudo-Anosov mapping class, and let
$[\Lambda_+]$ and $[\Lambda_-]$ be its attracting and repelling fixed points
in $\partial \T(S) = \pml(S)$, respectively.
Then the action of $\psi$ on $\overline{\T(S)}^{\mathrm{Th}}$ has North--South dynamics:
for any neighborhoods $U_+$ of $[\Lambda_+]$ and $U_-$ of $[\Lambda_-]$,
there exists an integer $N \in \mathbb N$ such that
\[
    \psi^n\bigl(\overline{\T(S)}^{\mathrm{Th}} \setminus U_-\bigr) \subset U_+
    \quad \text{and} \quad
    \psi^{-n}\bigl(\overline{\T(S)}^{\mathrm{Th}} \setminus U_+\bigr) \subset U_-
\]
for all $n \geq N$.
\end{coro}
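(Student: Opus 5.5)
The plan is to reduce the statement to the dynamics of a hyperbolic element of $\mathrm{PSL}(2,\mathbb Z)$ acting on $\mathbb H\cup\hat{\mathbb R}$, using Theorem~\ref{thm:id_t_to_h}. Put $A=H(\psi)$. Since $\psi$ is pseudo-Anosov, $A$ is hyperbolic. Its fixed points in $\hat{\mathbb R}$ are the irrational points $h([\Lambda_+])$ and $h([\Lambda_-])$; $A$ preserves the geodesic between them and translates along it by some length $\ell>0$. The relation $h\circ\psi=A\circ h$ gives $h\circ\psi^n=A^n\circ h$ for all $n\in\mathbb Z$. So it suffices to prove North--South dynamics for $A$ on the closed disk $\overline{\mathbb H}=\mathbb H\cup\hat{\mathbb R}$ and pull back by the homeomorphism $h$. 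The neighborhoods $h(U_\pm)$ are neighborhoods of $h([\Lambda_\pm])$, and $h$ maps complements to complements, so the conclusion transports directly. For $S_{0,4}$ I would use the standard identification of $\overline{\Tf}^{\mathrm{Th}}$ with $\overline{\To}^{\mathrm{Th}}$, under which $\mcg(S_{0,4})$ acts through a finite-index/finite-kernel relation with $\mathrm{PSL}(2,\mathbb Z)$. Any power of $\psi$ that lies in the image is hyperbolic with the same fixed points, and the finitely many residues mod that power are handled by continuity.

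\textbf{Normal form.} Conjugate $A$ in $\mathrm{PSL}(2,\mathbb R)$ by a Möbius map $g$ sending $h([\Lambda_-])\mapsto 0$ and $h([\Lambda_+])\mapsto\infty$. This $g$ is a homeomorphism of $\overline{\mathbb H}$, and it becomes $B=gAg^{-1}\colon z\mapsto \lambda z$ with $\lambda=e^{\ell}>1$.

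\textbf{Contraction estimate.} Take a neighborhood $V_+$ of $\infty$ and a neighborhood $V_-$ of $0$ in $\overline{\mathbb H}$. Shrinking them is harmless, since the conclusion for smaller sets implies it for larger ones. So we may assume $V_+=\{|z|>R\}\cap\overline{\mathbb H}$ (together with $\infty$) and $V_-=\{|z|<r\}\cap\overline{\mathbb H}$. Then the complement of $V_-$ is $\{|z|\ge r\}\cup\{\infty\}$. It follows that $B^n$ maps this complement into $\{|z|\ge\lambda^n r\}\cup\{\infty\}$, which lies in $V_+$ once $\lambda^n r>R$. Symmetrically, $B^{-n}$ maps $\{|z|\le R\}$ into $\{|z|\le\lambda^{-n}R\}$, which lies in $V_-$ for the same $n$. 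Choose $N$ with $\lambda^N>R/r$; then both inclusions hold for all $n\ge N$. Conjugating back by $g$ and then $h$ gives the corollary.

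\textbf{Main obstacle.} There is no real difficulty in the argument itself. The only care needed is uniformity on the boundary circle, and this is automatic because $\lambda$-scaling acts uniformly on the closed disk in the chosen coordinates. The other subtle point is the $S_{0,4}$ case, where I would rely on the commensurability identification stated in the introduction rather than a separate argument.
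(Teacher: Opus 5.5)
Your proposal is correct and follows the same route as the paper, which records this as an immediate consequence of the equivariant identification in Theorem~\ref{thm:id_t_to_h}: a pseudo-Anosov class becomes a hyperbolic element of $\mathrm{PSL}(2,\mathbb Z)$, and its North--South dynamics on $\mathbb H\cup\hat{\mathbb R}$ is transported back by $h$. Your conjugation to $z\mapsto\lambda z$ supplies the details the paper omits, and your remark on $S_{0,4}$ (where the action on $\overline{\Tf}^{\mathrm{Th}}\cong\mathbb H\cup\hat{\mathbb R}$ also factors through $\mathrm{PSL}(2,\mathbb Z)$) covers a case the paper leaves implicit.
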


Theorem~\ref{thm:id_t_to_h} and Corollary~\ref{coro:NSdym} will be utilized to 
estimate the approximate locations of the pleating laminations 
for the sequence of quasi-Fuchsian groups obtained by iterating a pseudo-Anosov map $\psi$.

\subsection{3-manifolds: mapping tori and quasi-Fuchsian groups}

By Thurston's hyperbolization theorem, the mapping torus of a pseudo-Anosov map $\psi$, defined as
\[
    M_\psi := S \times [0, 1] / (x, 1) \sim (\psi(x), 0),
\]
admits a complete hyperbolic metric of finite volume.

Next, we denote the deformation space of quasi-Fuchsian groups by $\QF(S)$. 
The Bers simultaneous uniformization theorem provides 
a natural biholomorphic parameterization $\QF(S) \cong \T(S) \times \T(S)$. 
Strictly speaking, 
the first component requires the reversed orientation for the parameterization to be biholomorphic, 
but we omit this distinction for simplicity. 
Specifically, for any ordered pair of points $X, Y \in \T(S)$, 
there exists a unique quasi-Fuchsian group $\Gamma$ 
such that the quotient 3-manifold $Q(X, Y) := \mathbb{H}^3 / \Gamma$ is homeomorphic to $S \times \mathbb{R}$, 
and the conformal structures on the components of the conformal boundary
corresponding to $S\times\{-\infty\}$ and $S\times\{+\infty\}$
are $X$ and $Y$, respectively. 
Within this space, the diagonal subset $\Delta := \{ Q(X, X) \mid X \in \T(S) \}$ corresponds to the Fuchsian groups.

\begin{figure}[h]
  \centering
  \includegraphics[width=0.85\linewidth]{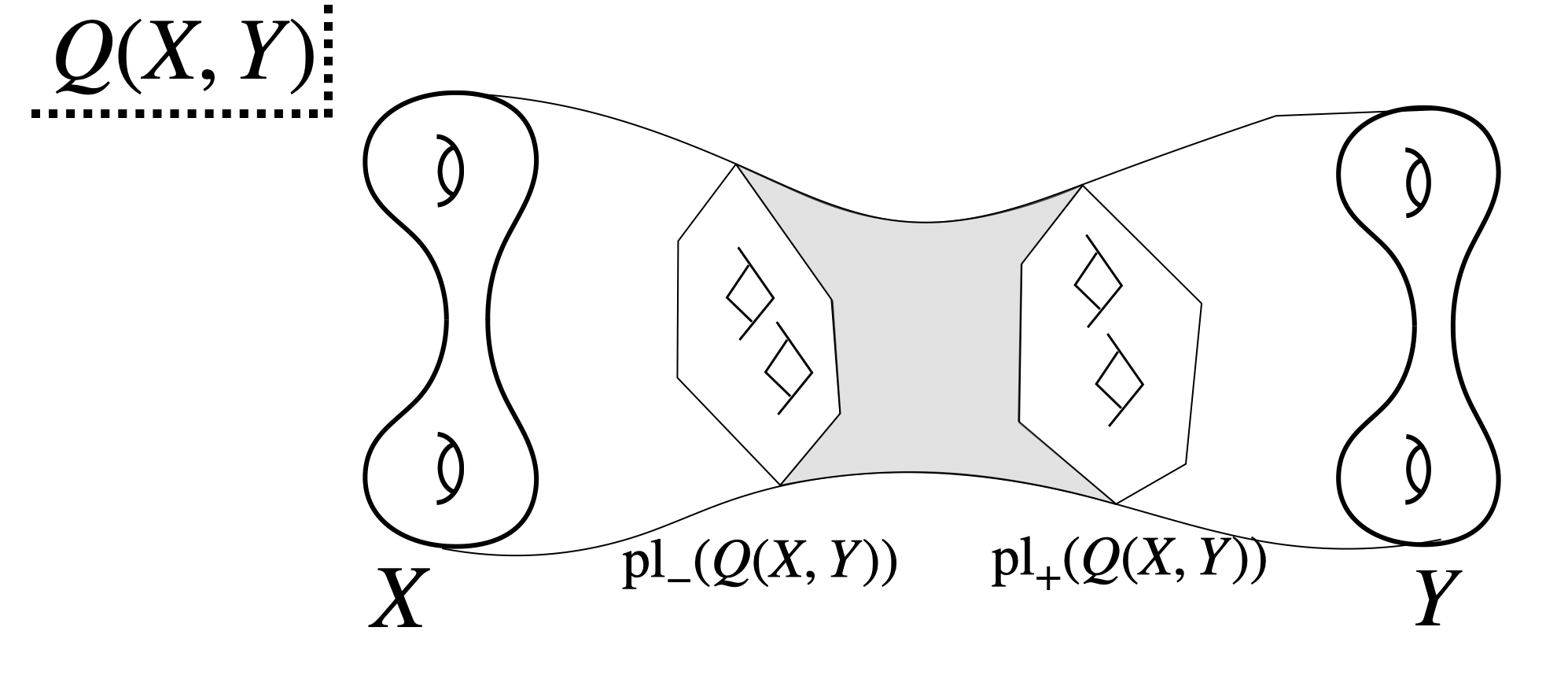}
  \caption{Configuration of the pleating laminations and conformal structures at infinity for a quasi-Fuchsian manifold $Q(X, Y)$.}
  \label{fig:pleatinglami}
\end{figure}

The \emph{convex core} of $Q:=Q(X, Y)$, denoted by $\core(Q)$, is the minimal closed convex submanifold containing all closed geodesics of $Q$. 
Since the quasi-Fuchsian manifold $Q$ is open, 
$Q$ has infinite hyperbolic volume. 
Therefore, it is common to consider the volume of its convex core, 
denoted by $\Vol(\core(Q))$, as a canonical finite-volume substitute. 

The boundary of $\core(Q)$ consists of two connected components, which are pleated surfaces facing the $X$-end and the $Y$-end, respectively. 
A pleated surface is totally geodesic on the complementary regions of a geodesic lamination
and is pleated along the lamination. 
The pleating angle along the leaves defines a transverse measure. 
Consequently, these pleating loci naturally correspond to measured laminations on $S$.
We call them the \emph{pleating laminations}, and denote them by $\pl_-(Q)$ and $\pl_+(Q)$ for the $X$-end and the $Y$-end, respectively. See Figure \ref{fig:pleatinglami}. 
The projective classes $[\pl_-(Q)]$ and $[\pl_+(Q)]$ in $\pml(S)$ 
serve as important geometric invariants that encode the pleating data of the convex core boundary.

In this paper, we study the asymptotic behavior of the convex core volume. 
More precisely, we investigate the sequence of quasi-Fuchsian manifolds obtained by iterating 
the pseudo-Anosov map $\psi$:
\[
    Q_{n,m} := Q(\psi^n(X),\psi^m(X)) \quad (n,m\in\mathbb Z).
\]
For simplicity, we write $Q_n := Q_{-n,n}$ for $n\geq 0$.

By the resolution of the ending lamination conjecture, a hyperbolic
3-manifold with finitely generated fundamental group is determined, up to
isometry, by its end invariants.  In the present setting, these invariants
are either conformal structures at geometrically finite ends or ending
laminations at geometrically infinite ends.  For punctured torus groups, this
principle was established in the form used here by Minsky's classification
theorem \cite{Min99}.  For the manifolds arising from the iterates of the
pseudo-Anosov map $\psi$, the limiting end invariants are given by the stable
and unstable laminations of $\psi$.

We also use the symbols $Q_{n,\infty}$, $Q_{-\infty,m}$, and 
$Q_{-\infty,\infty}$ to denote the corresponding singly or doubly degenerate 
manifolds, where the symbols $\psi^\infty X$ and $\psi^{-\infty}X$ are 
understood as the end invariants $[\Lambda_+]$ and $[\Lambda_-]$, respectively.

When one of the indices is infinite, such as $Q_{0, \infty}$ or $Q_{-\infty, 0}$, the notation represents a singly degenerate 3-manifold determined by the corresponding end invariants, for instance, $X$ and $[\Lambda_+]$. 
Depending on the context, we may also denote these manifolds as $Q(X, [\Lambda_+])$ or $Q([\Lambda_-], X)$ to explicitly emphasize their end invariants.
In the singly degenerate case, 
the corresponding uniformizing Kleinian group is a so-called b-group, 
meaning it possesses a simply connected invariant domain of discontinuity. 
Consequently, the convex hull of the limit set has a well-defined boundary component facing this domain, 
which guarantees that the pleating lamination on that side---such as $\pl_-(Q_{0, \infty})$---is rigorously defined.

Let $\Gamma$ be a finitely generated group. 
Consider a sequence of representations $\{\rho_n \colon \Gamma \to \mathrm{PSL}(2, \mathbb{C})\}$. 
This sequence is said to \emph{converge algebraically} 
to a representation $\rho_\infty$ if $\rho_n(\gamma)$ 
converges to $\rho_\infty(\gamma)$ in $\mathrm{PSL}(2, \mathbb{C})$ 
for every element $\gamma \in \Gamma$.

A sequence of 3-manifolds with basepoint $(M_n, \omega_n)$ \emph{converges geometrically} to 
$(M_\infty, \omega_\infty)$ if, roughly speaking, larger and larger balls around the basepoint $\omega_n$
embed almost isometrically into $M_n$.
More precisely, let $B_R(\omega_\infty)$ denote the metric ball of radius $R$ 
centered at the basepoint $\omega_\infty$ in $M_\infty$.
Then, for any $R > 0$ and $\epsilon > 0$, 
there must exist an integer $N > 0$ such that for all $n > N$, 
there are $(1+\epsilon)$-bi-Lipschitz embeddings $\varphi_n \colon B_R(\omega_\infty) \to M_n$ 
sending $\omega_\infty$ to $\omega_n$. 

Finally, a sequence of representations $\rho_n$ \emph{converges strongly} 
to $\rho_\infty$ if it converges algebraically to $\rho_\infty$ and, 
after choosing suitable basepoints, the corresponding quotient manifolds 
\[
    M_n = \mathbb{H}^3/\rho_n(\Gamma)
\]
converge geometrically to 
\[
    M_\infty = \mathbb{H}^3/\rho_\infty(\Gamma).
\]
For more details, see, e.g., \cite{McM96,BB16}.

\subsection{The Farey tessellation and the Farey tree}

Let us now focus on the once-punctured torus, and let $S = S_{1,1}$. 
After fixing a standard marking of $S$, the set of free homotopy classes of essential simple closed curves on $S$ is identified with the extended rational numbers $\hat{\mathbb{Q}} = \mathbb{Q} \cup \{\infty\}$. 
We represent elements of $\hat{\mathbb{Q}}$ as fractions $p/q$ in lowest terms, with the convention that $\infty = 1/0$. 
The geometric intersection number of two curves corresponding to $p/q$ and $r/s$ is given by $i(p/q, r/s) = |ps - qr|$.

The {\em Farey tessellation} is an ideal triangulation of the upper half-plane model of the hyperbolic plane $\mathbb{H}$. Its ideal vertices are exactly the elements of $\hat{\mathbb{Q}} \subset \partial_\infty \mathbb{H} = \mathbb{R} \cup \{\infty\}$. 
Two vertices $p/q$ and $r/s$ are joined by a geodesic edge in $\mathbb{H}$ 
if and only if their geometric intersection number on $S$ is equal to $1$. 

The graph dual to the Farey tessellation is an infinite trivalent tree, 
known as the {\em Farey tree}. 
The vertices of the Farey tree correspond to the ideal triangles of the Farey tessellation, 
and the edges correspond to the edges of the tessellation.

\subsection{Keen--Series coordinates}

To state the parameterization theorem of $\T(S)$ established by Keen and Series \cite{KeeSer04}, 
we first clarify the definition of pleating invariants. 
For a marked quasi-Fuchsian group $Q \in \QF(S) \setminus \Delta$, 
the pleating laminations $\pl_{\pm}(Q)$ on the boundary 
of its convex core carry natural transverse measures called {\em pleating measures}. 
Let $\mathrm{Len}(\pl_{\pm}(Q))$ be the {\em lamination length} with respect to 
the pleating measures (see \cite{KeeSer04} for the precise definition).
The \emph{pleating invariant} of each boundary component is the pair
\[
    \left([\pl_{\pm}(Q)], \mathrm{Len}(\pl_{\pm}(Q))\right)
    \in \pml(S) \times \mathbb{R}_{>0}.
\]
Since scaling the measure results in a proportional scaling of its length, 
this projective class is canonically identified with an element of $\pml(S) \times \mathbb{R}_{>0}$. 

\begin{thm}[Keen--Series, {\cite[Theorem 1]{KeeSer04}}]\label{thm:ks}
    Let $S$ denote a once-punctured torus. 
    Then, the map $\ks \colon \QF(S) \setminus \Delta \to (\pml(S) \times \mathbb R_{>0})^2$ defined by
    \[
        \ks(Q) :=
        \left(
            \left([\pl_-(Q)], \mathrm{Len}(\pl_-(Q))\right),
            \left([\pl_+(Q)], \mathrm{Len}(\pl_+(Q))\right)
        \right)
    \]
    is a continuous bijection.
\end{thm}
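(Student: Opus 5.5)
The plan is to treat the map $\ks$ one lamination pair at a time. We first work on the dense set of \emph{rational} pleating data, where both pleating laminations are simple closed curves $p/q \neq r/s \in \hat{\mathbb Q}$. Continuity and density then extend the conclusions to arbitrary pleating loci.

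\emph{Continuity.} Continuity of the projective pleating classes and of the lengths follows from the continuity of the bending measure of $\partial\core(Q)$ on $\QF(S)\setminus\Delta$, in the sense of Keen--Series and Kerckhoff--Series. If $Q_k\to Q$, the boundary pleated surfaces converge, after passing to subsequences, to pleated surfaces realizing $\partial\core(Q)$. Their bending measures converge weak-$*$, so both $[\pl_\pm]$ and $\mathrm{Len}(\pl_\pm)$ converge. The condition $Q\notin\Delta$ is exactly what prevents the measures from degenerating to $0$, so the projective classes are defined.

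\emph{Injectivity via pleating varieties.} Fix $[\pl_-]=p/q$ and $[\pl_+]=r/s$. If these are the pleating curves, the corresponding elements $\gamma_{p/q},\gamma_{r/s}$ have real trace, since they are hyperbolic with zero rotation. Hence $Q$ lies in the real locus $\{\operatorname{tr}\gamma_{p/q},\operatorname{tr}\gamma_{r/s}\in\mathbb R\}$ of the $2$-complex-dimensional space $\QF(S)$. One checks, using the Fricke trace polynomials and the holomorphy of traces, that the component of this locus on which $p/q$ and $r/s$ are actually the pleating loci is a smooth real $2$-dimensional \emph{pleating plane}. On this plane, the lengths $\ell_{p/q},\ell_{r/s}$ serve as coordinates.

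To prove that they are coordinates, we show the map $(\ell_{p/q},\ell_{r/s})$ is a local diffeomorphism. Its Jacobian is nondegenerate because of the real-analytic relation between complex length and bending: the derivative of the complex length of one pleating curve in the direction of bending along the other is given by a cosine formula of Wolpert type and has a definite sign. The map is also proper, because it extends to the Fuchsian locus and to the boundary of $\QF$. A proper local diffeomorphism onto a simply connected target is a diffeomorphism, which gives injectivity on each rational pleating plane. General pleating loci are handled by approximating them with rational ones, together with a uniqueness argument: a pleating variety for irrational data is the limit of rational planes, so two groups with equal invariants can be separated by nearby rational invariants, contradicting injectivity there.

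\emph{Surjectivity.} The image is shown to be open and closed. Openness comes from the local diffeomorphism above, combined with invariance of domain on the whole $4$-dimensional space. Closedness comes from properness: a sequence with convergent invariants (distinct laminations, positive lengths) stays in a compact subset of $\QF$. Compactness in turn follows from bounding the lengths of the two pleating curves and applying Bers-type length bounds in Teichm\"uller space.

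\emph{Main obstacle.} I expect the main obstacle to be the nondegeneracy of the Jacobian together with properness on the pleating planes. In particular, one must rule out a pleating plane that leaves $\QF$ through a cusp or a degenerate boundary point while the lengths stay bounded. For this I would first try explicit trace identities for the once-punctured torus, namely the Markov-type relation $\operatorname{tr}a\operatorname{tr}b\operatorname{tr}ab=\operatorname{tr}^2a+\operatorname{tr}^2b+\operatorname{tr}^2ab$, to control the pleating planes directly.
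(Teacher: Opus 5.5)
The paper gives no proof of this statement. It is quoted from Keen--Series, and the paper later uses only the continuity of $\ks$ (in Lemma~\ref{lem:pl-lam-control} and Lemma~\ref{lem:independence_of_left_boundary}).

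Measured against what is actually true, your proposal has a genuine gap: the surjectivity you try to prove is false, so $\ks$ is not onto $(\pml(S)\times\mathbb R_{>0})^2$. What Keen--Series establish is that the pleating invariants determine the group and depend continuously on it. In other words, $\ks$ is a continuous injection, and hence, by invariance of domain, a homeomorphism onto an open subset. That is all the paper needs. There are two obstructions to surjectivity.

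\textbf{The diagonal is never attained.} No point with $[\pl_-]=[\pl_+]$ lies in the image. If both boundary components of $\core(Q)$ were bent along the same lamination, their flat pieces would be bounded by the same geodesics and lie in the same planes. The two components would then coincide, forcing $Q\in\Delta$. Your parenthetical ``distinct laminations'' already concedes this.

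\textbf{Off the diagonal the lengths are constrained.} Take $\{[\pl_-],[\pl_+]\}=\{0/1,1/0\}$. Put $x=\operatorname{tr}A$ and $y=\operatorname{tr}B$, which are real and (after choosing lifts) $>2$ on your pleating plane, and put $z=\operatorname{tr}AB$. The relation $x^2+y^2+z^2=xyz$ gives
\[
    z=\tfrac12\bigl(xy\pm\sqrt{x^2y^2-4x^2-4y^2}\,\bigr).
\]
This is real, so every trace is real and $Q$ is Fuchsian, unless $x^2y^2<4(x^2+y^2)$, i.e.\ unless $\sinh(\ell_{0/1}/2)\sinh(\ell_{1/0}/2)<1$.

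So the length map on this pleating plane has nonempty open image that is a proper subset of $\mathbb R^2_{>0}$. It is not proper into $\mathbb R^2_{>0}$, and your closedness step fails at the boundary of the image. Along $\sinh(\ell_{0/1}/2)\sinh(\ell_{1/0}/2)=1$, the groups stay in a compact part of $\QF$ but converge to Fuchsian groups on Kerckhoff's line of minima, where $z=xy/2$ and the two curves meet orthogonally. That is, they converge into the excluded set $\Delta$, which is exactly the degeneration you point to in your continuity paragraph. Your ``proper local diffeomorphism onto a simply connected target'' step must therefore use the true image region as target, and you would first have to identify that region.

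\textbf{The irrational case.} Independently of the above, your passage from rational to irrational pleating data does not give injectivity, because injectivity does not survive limits. For example, $f(t)=t$ for $t\le 0$ and $f(t)=-\sqrt2\,t$ for $t\ge 0$ is continuous and injective on the dense set $\mathbb Q$, yet $f(-\sqrt2)=f(1)$. Suppose $G_1\neq G_2$ have the same irrational invariants and you want to separate them by nearby rational ones. You would need the images of small disjoint neighbourhoods of $G_1$ and $G_2$ to share an open set, i.e.\ you would need $\ks$ to be open at $G_1$ and $G_2$. Via invariance of domain, that requires local injectivity there, which is what you are trying to prove. The irrational case needs a direct argument for local injectivity of $\ks$ at irrational points, for instance showing that $\ell_\mu,\ell_\nu$ remain local coordinates on the pleating variety of an irrational pair $(\mu,\nu)$. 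A density argument will not do it.
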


This result states that a pair of pleating invariants 
uniquely determines a marked quasi-Fuchsian punctured-torus group, 
which is analogous to the parameterization by their ending invariants.

\begin{rmk} \label{rmk:KS}
The Keen--Series coordinates naturally extend to the boundary by allowing
the length coordinate to vanish. In this case, the corresponding pleating
lamination is replaced by the ending lamination of the degenerate end.
In particular, the point
\[
    (([\lambda_-],a),([\lambda_+],0)),\qquad a>0,
\]
corresponds to a singly degenerate manifold with ending invariant
$[\lambda_+]$.
\end{rmk}

\subsection{Gu\'eritaud's results} \label{subsec.gueritaud}

We next recall another feature specific to the once-punctured torus, 
concerning ideal triangulations of quasi-Fuchsian manifolds and mapping tori.

We identify $\hat{\mathbb Q}$ with the set of free homotopy classes of simple closed curves 
on the once-punctured torus. Under this identification, 
the pleating laminations of $Q(X, Y)$ can be expressed 
using pairs of real numbers $(\alpha_+, \beta_+)$ and $(\alpha_-, \beta_-)$ as follows.
We represent the projective class of $\pl_\pm(Q(X,Y))$ by the point
$\beta_\pm/\alpha_\pm\in \hat{\mathbb R}$. 
Then we define
\[
    i(\cdot,\pl_\pm(Q(X,Y)))\colon \hat{\mathbb Q}\to\mathbb R_{\ge 0}
\]
by
\[
    i\left(\frac{\xi}{\eta},\pl_\pm(Q(X,Y))\right)
    :=
    \left|
    \det 
    \begin{pmatrix}
        \beta_\pm & \eta \\
        \alpha_\pm & \xi
    \end{pmatrix}
    \right|.
\]
Here, $\xi / \eta \in \hat{\mathbb Q}$ is represented by coprime integers. 

Let $\mathcal E=\{e_i\}_{i\in I}$ be the ordered sequence of edges of the Farey
tessellation crossed by the geodesic
\[
    [\pl_-(Q(X,Y)),\pl_+(Q(X,Y))]
\]
from $[\pl_-(Q(X,Y))]$ to $[\pl_+(Q(X,Y))]$, where $I$ is an interval in $\mathbb Z$.
Here $I$ may be finite, semi-infinite, or bi-infinite, depending on whether the
endpoints of the geodesic are rational or irrational.

For each edge $e_i$, let $q^i_{-} := \xi^i_{-}/\eta^i_{-} \in \hat{\mathbb Q}$ be the vertex of the Farey triangle containing $e_i$ that lies in the connected component separated by $e_i$ containing $\beta_- / \alpha_-$. The fraction $q^i_{+} := \xi^i_{+}/\eta^i_{+} \in \hat{\mathbb Q}$ is defined similarly. Furthermore, we set:
    \begin{equation} \label{def: phi_seq}
        \phi^\pm_{i} := 
        \left| \det 
        \begin{pmatrix}
            \beta_\pm & \eta^i_{\pm} \\
            \alpha_\pm & \xi^i_{\pm}
        \end{pmatrix}
        \right|.
    \end{equation}

According to Gu\'eritaud's construction, the ideal triangulation of the convex core is determined in two distinct stages. 
First, the sequence of edges $\{e_i\}$ crossed by the geodesic in the Farey tessellation completely dictates the \emph{combinatorial structure} of the triangulation, yielding a sequence of abstract topological ideal tetrahedra denoted by $\{ \Delta_i \}$. 
Second, the \emph{geometric realization} of these tetrahedra in hyperbolic space is uniquely determined by a variational principle. 
More precisely, the hyperbolic shape of each tetrahedron $\Delta_i$ is determined by its dihedral angles
\[
    w_{i-1}-w_i,\qquad -w_{i-1},\qquad w_i,
\]
where the sequence $(w_i)$ is the unique maximizer of the volume functional over a specific parameter space constrained by the intersection numbers $\phi_i^\pm$. 
Here and below, we write $\nabla x_i:=x_{i-1}-x_{i}$.
An index $i$ is called a hinge if the two consecutive Farey triangles adjacent 
to $e_i$ lie on opposite sides of the geodesic; otherwise it is called a non-hinge.

This two-step characterization is summarized in the following theorem:

\begin{thm}[Gu\'eritaud, {\cite[Equation (11), Theorem 0.1]{Gue09}}] \label{thm:Gue_geomreal_qf}
    For the ideal triangulation of the convex core of a quasi-Fuchsian manifold $Q:=Q(X, Y)$:
    \begin{itemize}
        \item Its combinatorics are determined by the projective classes $[\pl_\pm(Q)]$ of the pleating laminations and the Farey tessellation.
        \item Its geometric realization is uniquely given by the point that maximizes the volume function over the set
            \[
                W(Q(X, Y)) := \{ 
                (w_i) \mid 
                \text{the following conditions hold} \}.
            \]
        Here, the conditions are given by:
            \[
                \begin{cases}
                    0 < w_i < \min\{ \phi_i^-,\phi_i^+, \pi \} & \text{for all $i$}; \\
                    \nabla \phi^-_i < \nabla w_i < \nabla \phi^+_i & \text{for all $i$}; \\
                    | w_{i + 1} - w_{i-1} | < w_i & \text{if $i$ is a hinge}; \\
                    w_{i + 1} + w_{i-1} < 2w_i & \text{if $i$ is a non-hinge}.
                \end{cases}
            \]
    \end{itemize}
\end{thm}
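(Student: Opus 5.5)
This statement is Gu\'eritaud's theorem, quoted from \cite{Gue09}. The plan below reconstructs the proof along the lines of the angle-structure method of \cite{Gue06,Gue09}.

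\textbf{Combinatorics.} First fix the combinatorial part. The geodesic $[\pl_-(Q),\pl_+(Q)]$ in the Farey tree crosses the edges $\{e_i\}_{i\in I}$ in order. Each crossing is a diagonal exchange on an ideal triangulation of the once-punctured torus: the triangulation dual to one Farey triangle is replaced by the one dual to the next. Stacking one ideal tetrahedron $\Delta_i$ for each exchange gives a layered triangulation of $S\times\mathbb R$ minus the cusp, truncated at the two ends by the pleated surfaces whose bending loci are the boundary edges of the layering. Because this triangulation is built only from the edge sequence, it depends only on $[\pl_\pm(Q)]$, which is the first bullet. The hinge/non-hinge distinction records whether consecutive exchanges turn left-right or continue in the same direction. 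This in turn determines which edges of $\Delta_{i-1},\Delta_i,\Delta_{i+1}$ are glued around each interior edge.

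\textbf{Angle structures.} Next, parametrize angle structures on this layered complex with the prescribed boundary behaviour. Assigning to $\Delta_i$ the dihedral angles $w_{i-1}-w_i$, $-w_{i-1}$, $w_i$ (up to the normalisation making each triple sum to $\pi$) automatically satisfies the vertex-sum condition. The edge-sum condition at each interior edge then becomes linear in the $w_i$; the hinge and non-hinge inequalities are exactly positivity of all dihedral angles. At the two ends, the requirement that the exterior dihedral angles equal the bending of the convex core boundary forces the comparison with $\phi^\pm_i$. The bending of $\pl_\pm$ across the relevant edge is the intersection number $i(\cdot,\pl_\pm)$, by the measure-theoretic description of the pleating lamination. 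This produces $0<w_i<\min\{\phi^-_i,\phi^+_i,\pi\}$ and $\nabla\phi^-_i<\nabla w_i<\nabla\phi^+_i$. So $W(Q(X,Y))$ is a nonempty open convex polytope, and I would check nonemptiness by an explicit interpolation between $\phi^-$ and $\phi^+$.

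\textbf{Maximizing volume.} The volume $\mathcal V=\sum_i$ (sum of Lobachevsky functions of the three angles of $\Delta_i$) is strictly concave on $W$. When $I$ is finite it extends continuously to $\overline W$, so a maximizer exists.

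\textbf{Main obstacle.} I expect the main obstacle to be showing that the maximizer lies in the interior of $W$, not on its boundary. I would argue by contradiction, as Gu\'eritaud does. If some tetrahedron is flattened, a first-order perturbation of the $w_i$ near that index strictly increases $\mathcal V$, because the derivative of the Lobachevsky function is infinite at $0$. The hinge/non-hinge inequalities are designed so that such a perturbation is always available, and this case analysis is the delicate part.

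\textbf{From critical point to $Q(X,Y)$.} At an interior critical point, the Lagrange (Schl\"afli) equations are the complex-length holonomy equations: the imaginary parts hold by construction and the real parts by criticality. Hence the shapes glue to a complete hyperbolic structure on the layered region, whose boundary is a pair of pleated surfaces with bending laminations of class $[\pl_\pm]$. These surfaces are locally convex, so the region is a convex core. Its developing map gives a quasi-Fuchsian group with pleating data $[\pl_\pm]$. The lengths match those of $Q$, because the $\phi^\pm_i$ fix the bending weights and the boundary geometry then fixes the length coordinate. By Theorem~\ref{thm:ks}, this group is $Q(X,Y)$. Uniqueness of the maximizer follows from strict concavity, which is the second bullet.
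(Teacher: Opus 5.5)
The paper gives no proof of this statement; it quotes it from \cite{Gue09}. Your outline follows Gu\'eritaud's architecture: a layered triangulation read off the Farey path, angle structures constrained by the bending, a concave volume functional, an interior maximizer, and Schl\"afli equations supplying the real parts of the holonomy. However, your last step does not work.

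The set $W(Q(X,Y))$ is built from $\phi_i^\pm=i(\cdot,\pl_\pm(Q))$, i.e.\ from the bending \emph{measures} themselves. Nothing in $W$ records the length coordinates $\mathrm{Len}(\pl_\pm(Q))$ of Theorem~\ref{thm:ks}. The manifold you assemble from the maximizer has bending measures $\pl_\pm(Q)$, but the length of $\pl_\pm(Q)$ on its convex core boundary is an output of the construction. Asserting that it equals the length in $Q$ presupposes that the constructed manifold is $Q$, which is what is to be proved. So Keen--Series cannot close the argument.

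What is missing is rigidity of bending data: a quasi-Fuchsian punctured-torus group is determined by its pair of measured bending laminations (Bonahon--Otal for rational laminations, Series for punctured-torus groups in general). Alternatively, you need a global argument showing every $Q$ is reached: continuity and injectivity of the map from bending data to groups, invariance of domain, and properness. You could also run the argument backwards from an independently known geometric triangulation of $\core(Q)$ (J\o rgensen, Akiyoshi--Sakuma--Wada--Yamashita). Its angles give an interior critical point of $\mathcal V$ on $W$, hence the maximizer by concavity. That, too, is an outside input your sketch lacks.

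Two further steps need repair. First, whenever a pleating lamination is irrational (the generic case), $I$ is infinite and $W$ is infinite-dimensional. Your existence argument covers only finite $I$. In general you need tail estimates of the kind in Proposition~\ref{prop:Gue_phi}(5) to make $\mathcal V$ finite and to extract a maximizer. You also need an argument that the infinite stack of tetrahedra accumulates onto pleated surfaces with bending $\pl_\pm(Q)$.

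Second, $\partial W$ is not only the locus of flattened tetrahedra. It also contains faces where a $\phi$-inequality ($w_i=\phi^\pm_i$ or $\nabla w_i=\nabla\phi^\pm_i$) becomes an equality. At a point of such a face where the hinge/non-hinge inequalities and $w_j<\pi$ stay strict, all dihedral angles are positive by your own description. There the infinite derivative of the Lobachevsky function at $0$ gives nothing. You must either show these faces lie in the flat locus or treat them separately.

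Finally, the triple $w_{i-1}-w_i,\,-w_{i-1},\,w_i$ sums to $0$ and always has a negative entry, so no normalization makes it the dihedral angles of $\Delta_i$. It is the triple of pleating angles of an intermediate pleated punctured torus. The angles of $\Delta_i$ are affine in $w_{i-1},w_i,w_{i+1}$, which is why the hinge/non-hinge inequalities involve three consecutive indices. Relatedly, you impose the $\phi$-inequalities at every index but only motivate them at the two ends. They come from comparing the pleating of every intermediate punctured torus with the bending of the two boundary components.
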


Having established the relationship between the tetrahedra $( \Delta_i )$ and the intersection numbers $(\phi_i^\pm)$ via the parameter space, the sequence $(\phi_i^\pm)$ provides the following combinatorial and geometric controls over the volume:

\begin{prop}[Gu\'eritaud, {\cite[Lemmas 3.2(i), 3.2(ii),3.2(vii), and 4.1]{Gue09}}] \label{prop:Gue_phi}
    For the sequences $(\phi^\pm_{i})$ and the ideal tetrahedra $( \Delta_i )$ of the convex core, the following hold:
    \begin{enumerate}
        \item The sequence $(\phi ^-_{i})$ is strictly increasing, while $(\phi ^+_{i})$ is strictly decreasing as $i\to+\infty$.
        \item For all \(i\), one has
        \[
            \begin{cases}
                |\phi^\pm_{i+1}-\phi^\pm_{i-1}|=\phi^\pm_i,
                & \text{if \(i\) is a hinge,}\\[0.5ex]
                \phi^\pm_{i-1}+\phi^\pm_{i+1}=2\phi^\pm_i,
                & \text{if \(i\) is a non-hinge.}
            \end{cases}
        \]

        \item For all $i$, one has
        $1 < \phi^-_{i}/\phi^-_{i-1} < 2\ \text{and}\ 1 < \phi^+_{i-1}/\phi^+_i < 2.$
        \item $\displaystyle \lim_{i \to \infty} \phi^+_i = \lim_{i \to - \infty} \phi^-_i = 0$ and $\displaystyle\lim_{i \to \infty} \phi^-_i = \lim_{i \to -\infty} \phi^+_i = \infty$.
        \item There exists a universal constant $C > 0$ (independent of $X, Y \in \T(S)$) such that for any $i \in \mathbb Z$, 
            \[
                \sum_{j \leq i} \Vol(\Delta_j) \leq C \phi^-_i, \quad \sum_{j \geq i} \Vol(\Delta_j) \leq C \phi^+_i.
            \]
    \end{enumerate}
\end{prop}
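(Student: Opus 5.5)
The five items are quoted from Gu\'eritaud, but I would re-derive them from the linear-algebraic meaning of $\phi^\pm_i$. Write every vertex $\xi/\eta\in\hat{\mathbb Q}$ as a primitive vector $v=(\xi,\eta)$. Consider the linear functionals $\ell_\pm(v)=\beta_\pm\xi-\alpha_\pm\eta$, so that $\phi^\pm_i=|\ell_\pm(v^i_\pm)|$.

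\textbf{Step 1: sign consistency.} The key preliminary observation is a sign statement. All Farey vertices lying in one component of $\hat{\mathbb R}\setminus\{\beta_\pm/\alpha_\pm\}$ can be represented by vectors on which $\ell_\pm$ has a constant sign. Moreover, the three vertices of any Farey triangle satisfy $v_c=v_a+v_b$ for suitable signs. Passing from $e_{i-1}$ to $e_i$ through a common Farey triangle, the new vertex is the Farey sum of two older vertices, and all three lie on the same side of $\beta_\pm/\alpha_\pm$. Hence $|\ell_\pm|$ is additive on this triple.

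\textbf{Step 2: items (2) and (1).} There are two cases for the new vertex.
\begin{itemize}
\item If $i$ is a non-hinge, the geodesic turns the same way twice. The triples give $v_{i+1}=v_i+(v_i-v_{i-1})$, which yields $\phi_{i-1}+\phi_{i+1}=2\phi_i$.
\item If $i$ is a hinge, the turn changes direction. Then $v_{i+1}=v_i\pm v_{i-1}$, which yields $|\phi_{i+1}-\phi_{i-1}|=\phi_i$.
\end{itemize}
This proves (2). Strict monotonicity (1) then follows by induction along the geodesic. The vertex $q^i_-$ moves monotonically away from $\beta_-/\alpha_-$ in the Farey tree, and an added summand on the same side strictly increases $|\ell_-|$. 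The statement for $\phi^+$ is symmetric, with the direction reversed.

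\textbf{Step 3: item (3).} This follows formally from (1) and (2). The lower bound $1<\phi^-_i/\phi^-_{i-1}$ is just strict increase. For the upper bound, in the non-hinge case $\phi^-_{i+1}=2\phi^-_i-\phi^-_{i-1}<2\phi^-_i$. In the hinge case $\phi^-_{i+1}=\phi^-_i+\phi^-_{i-1}<2\phi^-_i$, using monotonicity to pick the correct sign. The same computation applies to $\phi^+$.

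\textbf{Step 4: item (4).} As $i\to-\infty$, the vertices $q^i_-$ are successive Farey (continued-fraction) approximants of $\beta_-/\alpha_-$. Here $|\ell_-(v)|=|\eta|\,|\beta_-/\alpha_- - \xi/\eta|\cdot|\alpha_-|$, up to normalization. The standard estimate $|x-p/q|<1/q^2$ for convergents, together with $q\to\infty$, gives $\phi^-_i\to0$; if the endpoint is rational, $I$ is simply bounded on that side. As $i\to+\infty$, repeated Farey sums force $|v|\to\infty$ while $\ell_-$ keeps a fixed sign and grows, so $\phi^-_i\to\infty$. By (2), growth at least linear in $i$ suffices.

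\textbf{Step 5: item (5), the main obstacle.} The tetrahedron $\Delta_j$ has dihedral angles built from $w_{j-1},w_j,\nabla w_j$, with $0<w_j<\phi^-_j$ and $\nabla w_j<\nabla\phi^-_j$ in the appropriate direction. I would use the Lobachevsky function bound $\Vol(\Delta_j)\le C\,\theta_j(1+|\log\theta_j|)$, where $\theta_j$ is the smallest angle, or more crudely $\Vol(\Delta_j)\le C'\max(w_{j-1},w_j)^{1/2}$-type bounds. The hard part is summing over $j\le i$. In long non-hinge runs $\phi^-$ decreases only linearly, not geometrically, so a term-by-term bound by $\phi^-_j$ does not sum to $C\phi^-_i$.

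My first attempt would be to group indices into maximal non-hinge runs. Within a run, bound the total volume by a telescoping sum of the angle differences $\nabla w_j$: these are positive, bounded by $\nabla\phi^-_j$, and the Lobachevsky function is concave on the relevant range. That gives a bound of the form $C\cdot(\phi^-$ at the end of the run$)$. Across runs, the hinge relation $\phi_{i+1}=\phi_i+\phi_{i-1}$ forces geometric growth, so summing over runs is a convergent geometric series dominated by $C\phi^-_i$. The statement for $\phi^+$ follows by the symmetric argument.
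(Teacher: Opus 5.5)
The paper does not prove this proposition; it quotes \cite{Gue09}, so your argument has to stand on its own. Items (1)--(4) are elementary Farey bookkeeping once $\phi^\pm_i$ is normalized correctly (see the second paragraph). Item (5), however, is not established by what you describe. The claim that fails is that telescoping plus concavity of the Lobachevsky function bounds the volume of a non-hinge run by $C\phi^-$ at its end.

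First, you need the true angles of a non-hinge tetrahedron. The triple quoted in Subsection~\ref{subsec.gueritaud} sums to $0$, so it is not the angle triple of $\Delta_j$. Up to the factor-$2$ normalization, the angles are $\pi-w_j$, $\tfrac{w_{j-1}+w_{j+1}}{2}$ and $x_j:=w_j-\tfrac{w_{j-1}+w_{j+1}}{2}$. What telescopes over a run $a\le j\le b$ is $2\sum_j x_j=(w_a-w_{a-1})-(w_{b+1}-w_b)$. This is controlled by the \emph{one-sided} constraint $w_a-w_{a-1}<\phi^-_a-\phi^-_{a-1}$ together with $w_{b+1}>0$; the $\nabla w_j$ need not be positive.

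Second, and decisively, the volume is not linear in $x_j$. One only has $\Vol(\Delta_j)\le x_j\bigl(1+\log(w_j/x_j)\bigr)$. Your bound $C\theta_j(1+|\log\theta_j|)$ combined with Jensen over a run of length $m$ gives $X(1+\log(m/X))$ with $X=\sum_j x_j<\phi^-_b$, that is, at best $\phi^-_b(1+\log(m/\phi^-_b))$. Run lengths are unbounded (large partial quotients), and $\phi^-_b\to 0$ towards the $\pl_-$ end, so no uniform $C$ comes out.

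The missing idea is a weighted telescoping that absorbs the logarithm:
\begin{itemize}
\item Use $x_j\log(w_j/x_j)\le x_j\log(w_j/\epsilon_j)+\epsilon_j/e$ with $\epsilon_j=2^{-(b-j)}\phi^-_b$. This reduces matters to bounding $\sum_j(b-j)x_j$.
\item Summation by parts gives $2\sum_j(b-j)x_j=(b-a)(w_a-w_{a-1})-(w_b-w_a)<(b-a)P+\phi^-_a=\phi^-_b$. This uses that along a run with pivot $p$, $\phi^-$ grows \emph{exactly linearly} with step $P=i(p,\pl_-)$, and $P$ is also the bound on the slope of $w$.
\item At hinges, use $\Vol(\Delta_h)\le w_h\log 2<\phi^-_h\log 2$.
\item Between consecutive hinges $h<h'$ one has $\phi^-_{h'}\ge\phi^-_h+\phi^-_{h-1}>\tfrac32\phi^-_h$, so the run and hinge contributions sum geometrically.
\end{itemize}
This gives (5), with a constant of the shape recorded in the remark following the proposition.

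Separately, Steps 1--2 never say which vertex $v^i_-$ is, and this matters. Your identities ($v_{i+1}=2v_i-v_{i-1}$ at a non-hinge, $v_{i+1}=v_i+v_{i-1}$ at a hinge, for $\ell_-$) hold when $v^i_-$ is the third vertex of the Farey triangle on the $\pl_+$-side of $e_i$. Equivalently, $\phi^-_i$ is the sum of $i(\cdot,\pl_-)$ over the two endpoints of $e_i$. Then $\phi^-_i-\phi^-_{i-1}=i(p_i,\pl_-)$ with $p_i=e_{i-1}\cap e_i$. The pivot persists at a non-hinge, and at a hinge it is replaced by the new vertex, whose value is $\phi^-_{i-1}$. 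This yields (1)--(3), and also the formula used in Lemma~\ref{lem:gradient_exponential_growth}.

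With the vertex on the $\pl_-$-side, as the displayed definition in Subsection~\ref{subsec.gueritaud} literally reads, (1) is false. Take $[\pl_-]$ close to $0$ from the right and $[\pl_+]>1$. The path crosses $\{0,1\}$ and then $\{1,\infty\}$, whose near-side vertices are $\tfrac12$ and $0$, so $\phi^-$ drops from about $1$ to about $0$. You should state the far-side convention explicitly.

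Finally, for (4), the estimate $|x-p/q|<1/q^2$ covers only convergents, not the intermediate fractions that also occur as vertices. It is cleaner to use that an irrational endpoint forces infinitely many hinges, and that $\phi^-$ is multiplied by at least $\tfrac32$ between consecutive hinges.
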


\begin{rmk}
    Note that the properties of $(\phi_i^\pm)$ stated in 
    Proposition~\ref{prop:Gue_phi}~(1) and~(3) are independent of the geometry 
    of the convex core and depend only on the combinatorics of the Farey tessellation.
    On the other hand, the estimate in Proposition~\ref{prop:Gue_phi}~(5) 
    follows from the concavity of the Lobachevsky volume function.
    Indeed, the proof of \cite[Lemma 4.1]{Gue09} shows that one can choose
    \[
        C = 3(2e^{-1}+1+\log 2)+3\log 2.
    \]
\end{rmk}

The ideal triangulation of the mapping torus $M_\psi$ is obtained in a similar manner:

\begin{thm}[Gu\'eritaud, {\cite[Lemma 6.2, Main Theorem]{Gue06}}]\label{thm:Gue_geomreal_map_torus}
    For the ideal triangulation of the mapping torus:
    \begin{itemize}
        \item Its combinatorics are determined by the stable and unstable laminations $[\Lambda_\pm]$ and the Farey tessellation.
        \item Its geometric realization is uniquely given by the unique critical point of the volume function over the set
            \[
                W(\psi) := \{ 
                (w_i) \in \mathbb R^l \mid 
                \text{the following conditions hold} \}.
            \]
        Here and below, the indices are taken modulo $l$. 
        The conditions are given by:
            \[
                \begin{cases}
                    0 < w_i < \pi & \text{for all $i$}; \\
                    | w_{i + 1} - w_{i-1} | < w_i & \text{if $i$ is a hinge}; \\
                    w_{i + 1} + w_{i-1} < 2w_i & \text{otherwise}.
                \end{cases}
            \]
    \end{itemize}
\end{thm}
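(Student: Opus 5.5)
The statement has two parts, and I would treat the combinatorial part first and then the geometric part.

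\emph{Combinatorics.} By Theorem~\ref{thm:id_t_to_h}, $\psi$ acts on the Farey tree as a hyperbolic translation with axis $[\Lambda_-,\Lambda_+]$. Hence the sequence $\mathcal E=\{e_i\}_{i\in\mathbb Z}$ of Farey edges crossed by the geodesic from $[\Lambda_-]$ to $[\Lambda_+]$ satisfies $\psi(e_i)=e_{i+l}$ for a fixed period $l\ge 1$. Each pair of consecutive Farey triangles is related by a diagonal exchange on an ideal triangulation of $S$. I would realize each exchange as one ideal tetrahedron $\Delta_i$ glued onto the previous layer; this is the layered (Floyd--Hatcher) construction. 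Stacking all of these gives an ideal triangulation of $S\times\mathbb R$. Quotienting by the $\mathbb Z$-action of $\psi$ gives an ideal triangulation of $M_\psi$ with $l$ tetrahedra. The hinge/non-hinge type of each index depends only on whether the geodesic turns left or right at that triangle, and it is $l$-periodic. This proves the first bullet.

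\emph{Angle structures.} Next I would parametrize angle structures on this triangulation, i.e.\ positive dihedral angles with angle sum $\pi$ in each tetrahedron and total angle $2\pi$ around each edge. Following Gu\'eritaud, the edge equations in a layered triangulation are solved by a single real parameter per edge $e_i$. I would take this parameter to be $w_i$, an exterior/pleating-type angle at the edge created at step $i$, chosen so that $\Delta_i$ has its three dihedral angle pairs given by affine expressions in $w_{i-1},w_i,w_{i+1}$. Equivalently, up to reindexing, they take the form $w_{i-1}-w_i,\,-w_{i-1},\,w_i$ recorded in Theorem~\ref{thm:Gue_geomreal_qf}. I would then verify by a local computation at the hinge versus non-hinge configuration that positivity of all angles (with angle sum $\pi$) is equivalent to the three conditions $0<w_i<\pi$, $|w_{i+1}-w_{i-1}|<w_i$ at hinges, and $w_{i+1}+w_{i-1}<2w_i$ at non-hinges. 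Periodicity forces $w_{i+l}=w_i$. Consequently the space of angle structures is exactly the open convex polytope $W(\psi)\subset\mathbb R^l$, and it must be shown to be nonempty, for instance by exhibiting a small concave-type periodic sequence.

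\emph{Volume maximization.} The volume functional $\mathcal V(w)=\sum_i \Vol(\Delta_i(w))$ is a sum of Lobachevsky functions of the angles. It is strictly concave on $W(\psi)$ and extends continuously to the closure $\overline{W(\psi)}$. By the Casson--Rivin principle, a critical point in the interior satisfies the completeness/holonomy (shape-gluing) equations and hence gives the complete hyperbolic structure. Mostow rigidity and strict concavity then give uniqueness of this critical point. So the remaining task is existence. The maximum of $\mathcal V$ on the compact set $\overline{W(\psi)}$ exists, and I must show it is not attained on $\partial W(\psi)$.

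\emph{Main obstacle.} Excluding boundary maxima is where I expect the difficulty. On the boundary some tetrahedra become flat, with an angle equal to $0$ or $\pi$. My plan is to argue as Gu\'eritaud does. If the maximizer has a degenerate tetrahedron, examine the maximal run of consecutive indices where the defining inequalities become equalities. Then construct an explicit first-order deformation that perturbs the $w_j$ along that run, for example pushing the offending $w_j$ inward while using the hinge or non-hinge relations to propagate the change. The derivative of the Lobachevsky function, $-\log|2\sin\theta|$, blows up as $\theta\to0$ or $\pi$. So such a deformation increases the volume at infinite rate on flat tetrahedra while costing only finitely much elsewhere, contradicting maximality.

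The delicate cases are (a) runs that wrap all the way around the period, handled by using $l$-periodicity to get a global deformation, and (b) configurations where every tetrahedron is flat, excluded because the volume there is $0$ while $W(\psi)$ contains points of positive volume. Once boundary maxima are excluded, the maximizer is the unique interior critical point, which yields the second bullet.
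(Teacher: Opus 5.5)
\textbf{Where the proposal breaks: excluding a boundary maximizer.}

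The paper does not prove this statement; it quotes it from Gu\'eritaud \cite{Gue06}. Your outline reproduces the architecture of his argument: the layered triangulation, one parameter $w_i$ per diagonal exchange solving the edge equations, and strict concavity together with Casson--Rivin. Those parts are fine.

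The gap is in the step you flag as the main obstacle. The mechanism you propose for excluding a boundary maximizer fails in exactly the case that has to be excluded.

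Your infinite-rate argument only works for a tetrahedron that degenerates with exactly one zero angle, $(0,\theta,\pi-\theta)$ with $0<\theta<\pi$. For a tetrahedron with angles $(0,0,\pi)$ it does not apply. The blow-ups of $-\log|2\sin\theta|$ at the two zero angles are cancelled by the blow-up at the angle $\pi$, which is decreasing. Writing $\Lambda$ for the Lobachevsky function and taking $a,b>0$, moving to angles $(ta,tb,\pi-t(a+b))$ gives
\[
\Lambda(ta)+\Lambda(tb)-\Lambda\bigl(t(a+b)\bigr)
= t\Bigl(a\log\frac{a+b}{a}+b\log\frac{a+b}{b}\Bigr)+o(t).
\]
So every one-sided directional derivative there is finite, at most $(a+b)\log 2$.

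Such points lie on $\partial W(\psi)$ with no one-zero-angle tetrahedron available to exploit. Take a hinge $i$ and the point with $w_i=0$, $w_{i-1}=w_{i+1}\in(0,\pi)$, and all other inequalities strict. In Gu\'eritaud's parametrization this makes $\Delta_i$ of type $(0,0,\pi)$ and every other tetrahedron non-degenerate. Pushing $w_i$ inward there gains only a bounded rate. That gain must be compared with finite log-sine contributions from $\Delta_{i\pm1}$, whose sign is not determined locally. So ``infinite gain versus finite cost'' gives no contradiction. Your case (b), where all tetrahedra are flat, does not cover this mixed situation.

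\textbf{Why this is a missing idea, not a missing detail.}

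For general ideal triangulations admitting angle structures, the volume maximum can genuinely lie on the boundary with only $(0,0,\pi)$ tetrahedra degenerate. This is what happens for non-geometric triangulations. It is also why an angle structure yields hyperbolicity of the manifold but not geometricity of the triangulation.

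The infinite-derivative lemma only reduces the problem to $(0,0,\pi)$ degenerations. You then still need an argument that uses the specific hinge/non-hinge combinatorics of the layered triangulation to show such tetrahedra cannot occur at the maximizer. One route would be to identify which equality cases of the $w$-inequalities produce $(0,0,\pi)$ tetrahedra. You would then combine this with the first-order optimality conditions at the maximizer, namely the signs of the partial derivatives $\partial\mathcal V/\partial w_j$, which are logarithms of ratios of products of sines.

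That argument is the real content of Gu\'eritaud's Main Theorem, and your proposal does not supply it.
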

    
\begin{rmk}
    Note that the definition of $w_i$ differs by a factor of $2$ between the paper~\cite{Gue09} on quasi-Fuchsian manifolds and the paper~\cite{Gue06} on mapping tori. 
    One should notice that when considering mapping tori, there is no need to consider intersection numbers given as $\phi_i$.
    The combinatorial structure, such as whether an index $i$ corresponds to a hinge or a non-hinge, is completely determined by $\psi$. Based on this information alone, periodicity arises naturally.
\end{rmk}

\begin{prop}
    Let $l$ be the combinatorial translation length of the pseudo-Anosov map $\psi$ acting on the Farey tree. 
    In other words, $l$ is the number of edges of the Farey tree in a fundamental domain of the axis $[\Lambda_-, \Lambda_+]$ of $\psi$.
    Then, the combinatorial structure of the ideal triangulation of $M_\psi$ is periodic with period $l$.
    The elements $q^i_{\pm}(\infty)$ determined by the sequence of Farey triangles connecting $\Lambda_{\pm}$ satisfy the following:
        \[
            \psi^{\pm1}(q^i_{\pm}(\infty)) = q^{i\pm l}_{\pm}(\infty), \quad \psi^{\mp1}(q^i_{\pm}(\infty)) = q^{i\mp l}_{\pm}(\infty).
        \]
\end{prop}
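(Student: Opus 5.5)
This is a purely combinatorial statement about the Farey tree and the axis of $\psi$, so I would prove it from Theorem~\ref{thm:id_t_to_h} alone. Under the equivariant identification, $H(\psi)\in\mathrm{PSL}(2,\mathbb Z)$ is a hyperbolic element. It preserves the Farey tessellation, since it preserves $\hat{\mathbb Q}$ and the relation $i(p/q,r/s)=|ps-qr|=1$. It fixes $[\Lambda_-]$ and $[\Lambda_+]$, with the first repelling and the second attracting by Corollary~\ref{coro:NSdym}. Hence it maps the geodesic $[\Lambda_-,\Lambda_+]$ to itself, preserves its orientation, and permutes the Farey edges crossed by that geodesic. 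Let $\mathcal E=\{e_i\}_{i\in\mathbb Z}$ be these edges, indexed in order from $[\Lambda_-]$ to $[\Lambda_+]$. An orientation-preserving automorphism of the linearly ordered set $\mathcal E\cong\mathbb Z$ is a translation $e_i\mapsto e_{i+l}$ for some integer $l$. We have $l>0$ because $\psi$ moves points toward $[\Lambda_+]$. This $l$ is exactly the number of Farey tree edges in a fundamental domain of the axis, i.e.\ the combinatorial translation length.

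Next I would transport the remaining data along $\psi$. The Farey triangles between consecutive edges $e_i,e_{i+1}$ are mapped to those between $e_{i+l},e_{i+l+1}$. The hinge/non-hinge type of $i$ is defined by whether the two triangles adjacent to $e_i$ lie on the same or opposite sides of the oriented geodesic. Since $\psi$ preserves the geodesic and its orientation, and acts as an orientation-preserving isometry of $\mathbb H$, it preserves left/right sides, so the type of $i$ equals the type of $i+l$. Theorem~\ref{thm:Gue_geomreal_map_torus} says the combinatorics of the layered triangulation of $M_\psi$ are determined by this sequence of edges and types. Therefore the triangulation is $l$-periodic, and it descends to $l$ tetrahedra in $M_\psi$.

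For the vertex identities, recall how $q^i_\pm(\infty)$ is defined. It is the vertex of the Farey triangle containing $e_i$ on the side of $e_i$ toward $[\Lambda_\pm]$; that is, it is the third vertex of the triangle adjacent to $e_i$ in the direction of $[\Lambda_\pm]$. The map $\psi$ sends $e_i$ to $e_{i+l}$, fixes $[\Lambda_\pm]$, and so sends the complementary component of $e_i$ containing $[\Lambda_\pm]$ to the corresponding component of $e_{i+l}$. It follows that $\psi(q^i_\pm(\infty))=q^{i+l}_\pm(\infty)$, and applying $\psi^{-1}$ gives $\psi^{-1}(q^i_\pm(\infty))=q^{i-l}_\pm(\infty)$. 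The statement as written pairs the signs with $\pm$. I would read it as the single identity $\psi^{k}(q^i_\pm(\infty))=q^{i+kl}_\pm(\infty)$ for $k=\pm1$, which is what the argument proves for both choices of sign.

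The only delicate point is the orientation and indexing conventions. One must check that $\psi$, rather than $\psi^{-1}$, translates toward $[\Lambda_+]$, and that the action on $\hat{\mathbb R}$ (through the marking change $f\circ\psi^{-1}$) has $[\Lambda_+]$ as its attracting point. I would settle both by invoking Corollary~\ref{coro:NSdym} directly instead of computing matrices.
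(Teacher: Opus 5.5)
Your argument is correct. The paper does not prove this proposition: it appears without proof right after Theorem~\ref{thm:Gue_geomreal_map_torus}, as part of the background taken from Gu\'eritaud~\cite{Gue06}. So there is no competing route to compare against. Deriving it from Theorem~\ref{thm:id_t_to_h} and Corollary~\ref{coro:NSdym} is the standard justification, and it supplies what the paper leaves implicit.

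Your argument is also robust to the wording of the hinge definition in Subsection~\ref{subsec.gueritaud}, which is loose. Under any reading, the type of $i$ depends only on where Farey data sit relative to the oriented geodesic, and that is exactly what $\psi$ preserves.

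Three points are worth stating explicitly.

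\textbf{Why $l>0$.} An element of $\mathrm{PSL}(2,\mathbb Z)$ with distinct attracting and repelling fixed points on $\hat{\mathbb R}$ cannot be elliptic, since those have finite order. It cannot be parabolic either, since those have a single boundary fixed point. Hence $H(\psi)$ is hyperbolic and translates its axis by a positive amount. This moves each crossing point, and therefore each edge $e_i$, strictly toward $[\Lambda_+]$, so $l\ge 1$.

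\textbf{How to phrase periodicity.} Periodicity of the triangulation is cleanest as a statement about the deck transformation, not about the hinge word. The deck transformation of $\widetilde{M_\psi}$ induced by $\psi$ carries the layer indexed by a Farey triangle $t$ to the layer indexed by $\psi(t)$. It therefore carries $\Delta_i$ to $\Delta_{i+l}$. This gives periodicity of the actual gluings, not only of the hinge pattern. It also shows that $l$ is a period but not necessarily the minimal one; for example, when $\psi$ is a proper power the minimal period is smaller.

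\textbf{Sign conventions.} Your reading $\psi^{k}(q^i_\pm(\infty))=q^{i+kl}_\pm(\infty)$ for $k=\pm1$ is exactly what the paired signs in the statement encode. It matches how the identities are used later, namely $q^{j+l}_-=\psi(q^j_-)$ and $q^{j-l}_+=\psi^{-1}(q^j_+)$ in Lemma~\ref{lem:boundary_intersection_bound}.
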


\section{Asymptotics of the convex core volume versus the volume of the mapping torus}
Throughout this section, we work exclusively with the once-punctured torus. 
Hence we omit $S$ from the notation and write
$\T$, $\mcg$, $\pml$, $\ml$, and $\QF$ for the corresponding objects
associated with $S_{1,1}$.

We fix a pseudo-Anosov map $\psi \in \mcg$.
Let $Q_n := Q(\psi^{-n} X, \psi^n X)$ and $\pl_{\pm, n} := \pl_\pm(Q_n)$
with stable and unstable laminations $\Lambda_\pm$.
We also denote the infinite cyclic covering of $M_\psi$ by $\widetilde{M_\psi}$.

\subsection*{Strategy of the proof}
To bound the volume of the convex core from below, 
it suffices to show that the space $W(Q_n)$ in Theorem \ref{thm:Gue_geomreal_qf} contains 
the geometric realization of approximately $2n$ copies of the fundamental domain of $\widetilde{M_\psi}$. 
Since the geometric realization of the convex core of $Q_n$ is given by the weight sequence maximizing the volume function in $W(Q_n)$,
evaluating the volume functional at such a sub-configuration yields a lower bound for the total volume of the convex core.

To obtain an upper bound for the volume of the convex core, 
we reverse the roles of $Q_n$ and $M_\psi$ in the preceding argument. 
With a slight modification, 
it suffices to show that if we divide the ``main portion'' 
(the part occupying the vast majority of the volume) 
of the convex core of $Q_n$ into approximately $2n$ blocks, 
each block can be compared with an admissible period block for $M_\psi$.

\subsection{Combinatorial inflexibility}

When evaluating the lower bound of the convex core volume, 
we must first show that the combinatorial data---namely, 
the sequences of Farey triangles crossed by 
the geodesics $[ \pl_{-, n}, \pl_{+, n} ]$ and $[\Lambda_-, \Lambda_+]$ coincide 
over approximately $2n$ periods.

\begin{lem}\label{lem:pl-lam-control}
    Let $U_\pm$ be neighborhoods of $[\Lambda_\pm]$ in $\pml$. Then there exist open sets $V_\pm \subset \T$ satisfying the following conditions:
    \begin{itemize}
        \item For any $X \in V_-$ and $Y \in V_+$, we have $[\pl_{\pm} (Q(X, Y))] \in U_\pm$.
        \item The closures $\mathrm{cl}(V_\pm)$ in $\overline{\T}^{\mathrm{Th}}$ are neighborhoods of $[\Lambda_\pm]$ in $\pml$, respectively.
    \end{itemize}
\end{lem}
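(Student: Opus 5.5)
We will prove this by contradiction, combining the continuity of the Keen--Series coordinates with the continuity of the extended pleating data at the degenerate limit. By symmetry we treat only $U_+$, and we may shrink $U_\pm$ so that they are disjoint. The main tool is the continuity of pleating laminations under strong convergence: if $Q(X_k,Y_k)\to Q_\infty$ strongly, with $Q_\infty$ doubly degenerate with ending laminations $[\Lambda_-],[\Lambda_+]$, then $[\pl_\pm(Q(X_k,Y_k))]\to[\Lambda_\pm]$ in $\pml$.

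\textbf{Choice of the sets $V_\pm$.} Via Theorem~\ref{thm:id_t_to_h}, identify $\overline{\T}^{\mathrm{Th}}$ with $\mathbb H\cup\hat{\mathbb R}$. Take $V_\pm$ to be open horoball-type or half-disc neighborhoods $D_\pm(r)\cap\mathbb H$ of the irrational points $[\Lambda_\pm]$, of small Euclidean radius $r$. Their closures then contain an interval of $\hat{\mathbb R}$ around $[\Lambda_\pm]$, so the second condition holds automatically. It remains to show that for $r$ small enough the first condition holds.

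\textbf{The contradiction argument.} Suppose no $r$ works. Then there are $X_k\to[\Lambda_-]$ and $Y_k\to[\Lambda_+]$ in the Thurston compactification such that, say, $[\pl_+(Q(X_k,Y_k))]\notin U_+$.

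\emph{Step 1: convergence of the groups.} We claim that $Q(X_k,Y_k)$ converges, after normalization and passing to a subsequence, to the doubly degenerate group with end invariants $([\Lambda_-],[\Lambda_+])$, namely $\widetilde{M_\psi}$. This is the key step and the main obstacle. For punctured torus groups it follows from Minsky's pivot theorem and the model from \cite{Min99}: the Farey geodesic $[X_k,Y_k]$ fellow-travels $[\Lambda_-,\Lambda_+]$ on longer and longer segments, which gives uniform length bounds for the curves near the axis. Hence the sequence is algebraically precompact, and the ending lamination theorem identifies the limit. Strong convergence, rather than only algebraic convergence, holds because the limit has no parabolic elements other than the cusp, so Anderson--Canary type results apply. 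Alternatively, the first step can be organized via McMullen \cite{McM96} when $X_k,Y_k$ lie on $\psi$-orbits. That suffices for the application, but the general statement needs the Minsky machinery.

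\emph{Step 2: convergence of the pleating laminations.} By strong convergence, the convex core boundaries converge in the sense that bending laminations converge in $\pml$ to ending laminations. For punctured tori this can also be seen directly. Let $\gamma\in\hat{\mathbb Q}$ be any simple closed curve with $i(\gamma,\Lambda_+)$ bounded away from zero relative to its length. Its geodesic length in $Q(X_k,Y_k)$ stays bounded, whereas curves realized on the convex core boundary near $[\Lambda_+]$ must have lengths tending to infinity. Using Bonahon's length estimate $\ell(\gamma)\le \ell_{\partial\core}(\gamma)$ together with the bounded length of pleating curves, every limit point of $[\pl_+(Q(X_k,Y_k))]$ must be an ending lamination of the limit, hence equal to $[\Lambda_+]$.

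\emph{Conclusion.} Step 2 contradicts $[\pl_+(Q(X_k,Y_k))]\notin U_+$, which proves the claim for $U_+$; the case of $U_-$ is identical.

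\emph{Alternative route.} One could instead use Remark~\ref{rmk:KS} to argue through continuity of $\ks^{-1}$ at the boundary point $(([\Lambda_-],0),([\Lambda_+],0))$. We avoid this because it requires properness of $\ks$, which is not stated in Theorem~\ref{thm:ks}.
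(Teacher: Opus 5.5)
Your reduction is the same as the paper's. Both proofs derive the lemma from the sequential statement that $X_k\to[\Lambda_-]$ and $Y_k\to[\Lambda_+]$ in $\overline{\T}^{\mathrm{Th}}$ force $[\pl_\pm(Q(X_k,Y_k))]\to[\Lambda_\pm]$, which yields a product neighborhood of $([\Lambda_-],[\Lambda_+])$. Your packaging is cleaner than the paper's detour through projections of $\ks^{-1}(\widetilde U)$ and $\psi^{\pm N}$: you use explicit Euclidean half-discs and a contradiction argument. Drop ``horoball-type'', though: a horoball's closure meets $\hat{\mathbb R}$ only at its base point, so it violates the second condition. You are also right that McMullen's theorem only concerns $\psi$-orbits, while the lemma quantifies over arbitrary $X\in V_-$, $Y\in V_+$. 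The paper just cites McMullen together with Keen--Series for the sequential statement. Your Step~1 (double limit theorem, Minsky's classification, strong convergence from the absence of accidental parabolics) is a reasonable way to supply the convergence of the groups.

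The genuine gap is in Step~2, which is exactly the part the paper outsources to Keen--Series continuity. There are two problems.

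\emph{Which end.} Suppose every limit point of $[\pl_+(Q(X_k,Y_k))]$ is unrealizable in the limit. It still does not follow that it equals $[\Lambda_+]$. The limit $\widetilde{M_\psi}$ is doubly degenerate, so both $[\Lambda_-]$ and $[\Lambda_+]$ are unrealizable there. No length comparison distinguishes the two ends, and shrinking $U_\pm$ to be disjoint does not help.

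\emph{Unrealizability.} The length argument does not even establish that the limit is unrealizable. What is uniformly bounded is the length of the bending \emph{measure}, not of the pleating curves; those converge projectively to an irrational lamination, so their lengths blow up. You have no control on the mass of the bending measure in $\ml$, so rescaling to get a nonzero limit can destroy the bound. Bounded length alone does not force a limit lamination to be unrealizable.

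The missing ingredient is geometric. You need to show that $\partial_+\core(Q(X_k,Y_k))$ leaves every compact set, and that it does so through the $+$ end. The reason is that it lies on the far side of a compact core carried by finitely many fixed closed geodesics, and this core persists under strong convergence. Then the Thurston--Bonahon principle applies: pleated surfaces exiting an end have pleating loci accumulating on that end's ending lamination, here $[\Lambda_+]$ by unique ergodicity. Alternatively, invoke the Keen--Series boundary continuity as the paper does. With that step supplied, the rest of your argument goes through.
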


\begin{proof}
    Define
    \[
        \widetilde U_\pm := U_\pm \times \mathbb R_{>0},
        \qquad
        \widetilde U := \widetilde U_- \times \widetilde U_+
        \subset
        (\pml\times \mathbb R_{>0})^2.
    \]

    Since the map $\ks$ given in Theorem \ref{thm:ks} is continuous, the preimage $\ks^{-1} ( \widetilde U ) \subset \QF \setminus \Delta$ is an open set. 
    Let $\mathrm{pr}_\pm \colon \T \times \T \to \T$ be the projections defined by 
        \[
            \mathrm{pr}_\pm (X_-, X_+) := X_\pm.
        \]
    Using the Bers simultaneous uniformization map $\beta \colon \QF \to \T \times \T$, we set
        \[
            V'_\pm := \mathrm{pr}_\pm ( \beta ( \ks^{-1} ( \widetilde U ) ) ) \subset \T.
        \]
%
    We claim that there exists an integer $N \in \mathbb N$ such that the sets $V_\pm := \psi^{\pm N} (V'_{\pm})$
    satisfy $V_- \times V_+ \subset \beta ( \ks^{-1} ( \widetilde U ) )$. 
    
    By Corollary \ref{coro:NSdym}, 
    the sequence of sets $V^{n}_\pm := \psi^{\pm n} (V'_{\pm})$ 
    converges to $[\Lambda_\pm]$ in $\overline{\T}^{\mathrm{Th}}$. 
    Also, by Remark~\ref{rmk:KS}, 
    we have
    \[
        ([\Lambda_-],[\Lambda_+])
        \in
        \overline{\beta(\ks^{-1}(\widetilde U))}
    \]
    in the Thurston compactification.
    Consequently, the product $V^{n}_- \times V^{n}_+$ converges to $( [\Lambda_-], [\Lambda_+] )$ 
    with respect to the product topology. 

    For the once-punctured torus, the Bers boundary is naturally described
    in terms of ending laminations, as in Minsky's model for punctured torus
    groups \cite{Min99}. 
    Under this identification, the convergence results for pseudo-Anosov
    iterations toward the infinite cyclic cover of the mapping torus
    \cite[Theorem 3.11]{McM96}, together with the continuity of the Keen--Series pleating
    coordinates \cite[Theorem 15, Proposition 14]{KeeSer04}, imply that the pleating invariants extend
    continuously to the corresponding degenerate limits.
    In particular, if
    \[
        X_m \to [\Lambda_-],
        \qquad
        Y_m \to [\Lambda_+]
    \]
    in the Thurston compactification, then
    \[
        [\pl_-(Q(X_m,Y_m))] \to [\Lambda_-],
        \qquad
        [\pl_+(Q(X_m,Y_m))] \to [\Lambda_+].
    \]
    Therefore, after replacing $n$ by a sufficiently large integer if necessary,
    we may assume that
    \[
        V_-^n \times V_+^n
        \subset
        \beta\bigl(\ks^{-1}(\widetilde U)\bigr).
    \]
    Indeed, this follows from the product topology on
    $\overline{\T}^{\mathrm{Th}}\times\overline{\T}^{\mathrm{Th}}$:
    the set $\beta(\ks^{-1}(\widetilde U))$ contains the intersection with
    $\T\times\T$ of a product neighborhood of
    $([\Lambda_-],[\Lambda_+])$.
    Thus, setting
    \[
        V_\pm := V_\pm^n
    \]
    for such a sufficiently large $n$, we obtain
    \[
        X\in V_-,\quad Y\in V_+
        \quad\Longrightarrow\quad
        [\pl_\pm(Q(X,Y))]\in U_\pm.
    \]
    Moreover, by the North--South dynamics of $\psi$ on
    $\overline{\T}^{\mathrm{Th}}$, the closures of $V_\pm$ in
    $\overline{\T}^{\mathrm{Th}}$ are neighborhoods of $[\Lambda_\pm]$,
    respectively. 
\end{proof}

We now show 
that the combinatorial data of Gu\'eritaud's triangulation of the convex core of $Q_n$ and 
that of the mapping torus $M_\psi$ coincide over approximately $2n$ periods of $\psi$.
In other words, the combinatorial structures of the ideal triangulation of $Q_n$ 
and the mapping torus $M_\psi$ coincide in the ``deep'' part of the convex core.
Imitating the terminology of geometric inflexibility, 
we refer to such an agreement as \emph{combinatorial inflexibility}.

\begin{figure}[h]
  \centering
  \includegraphics[width=0.55\linewidth]{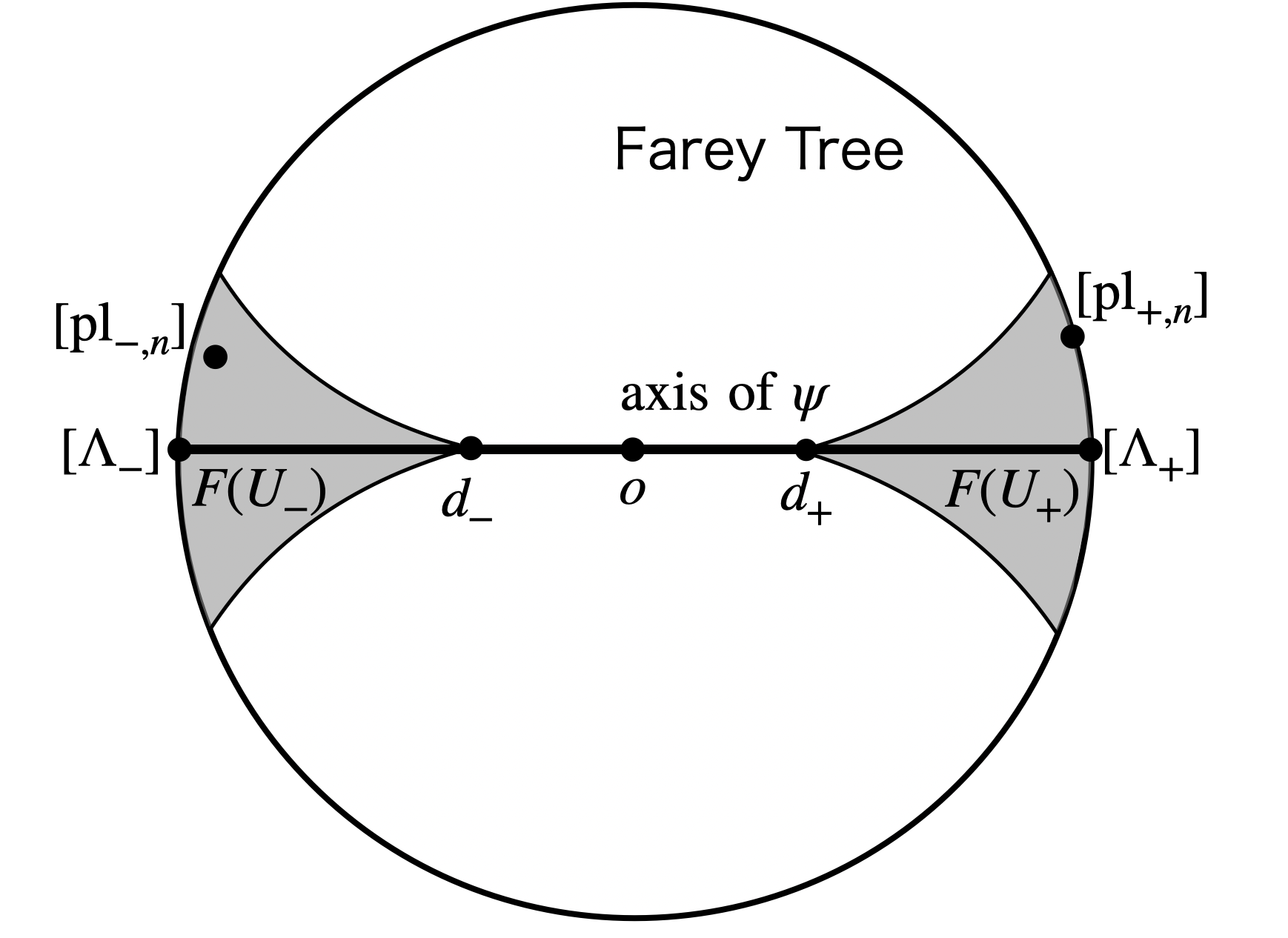}
  \caption{Configuration of the axis of $\psi$, the domains $F(U_\pm)$, 
  and the projective measured laminations $[\pl_{\pm,n}]$ and $[\Lambda_\pm]$.}
  \label{fig:cmbgeominflex}
\end{figure}

\begin{thm}[Combinatorial inflexibility]\label{thm:combinatorial_inflexibility}
    Fix a vertex $o$ on the axis $[\Lambda_-,\Lambda_+]$ of $\psi$ in the Farey tree.
    For sufficiently large $n$, if we increment $n$ by $1$, the length of the
    connected component of
    \[
        [\pl_{-,n},\pl_{+,n}] \cap [\Lambda_-,\Lambda_+]
    \]
    containing $o$ increases by at least two fundamental domains of the action of
    $\psi$ on its axis.
\end{thm}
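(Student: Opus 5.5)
Our plan is to reduce the statement to Lemma~\ref{lem:pl-lam-control} together with the North--South dynamics (Corollary~\ref{coro:NSdym}) and the equivariance of Farey combinatorics under $\psi$ (Theorem~\ref{thm:id_t_to_h}).

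\textbf{Step 1: choose the neighbourhoods.} Since $[\Lambda_\pm]$ are irrational, the axis $[\Lambda_-,\Lambda_+]$ is a bi-infinite geodesic in the Farey tree. For each edge $e$ of the tree there are two complementary half-trees, and their ideal boundaries are intervals of $\hat{\mathbb R}$. If $[\mu]$ lies in the boundary interval $I$ of the half-tree not containing $o$, then the geodesic from $o$ (or from anything on the other side) towards $F([\mu])$ crosses $e$. Fix an edge $e_0^+$ of the axis on the $\Lambda_+$ side of $o$, and let $U_+$ be the corresponding boundary interval, an open neighbourhood of $[\Lambda_+]$. Choose $e_0^-$ and $U_-$ symmetrically. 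Lemma~\ref{lem:pl-lam-control} then gives open sets $V_\pm\subset\T$ such that $X'\in V_-$ and $Y'\in V_+$ imply $[\pl_\pm(Q(X',Y'))]\in U_\pm$. The closures of $V_\pm$ are neighbourhoods of $[\Lambda_\pm]$.

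\textbf{Step 2: use equivariance.} By Corollary~\ref{coro:NSdym} there is $N$ with $\psi^{n}X\in V_+$ and $\psi^{-n}X\in V_-$ for all $n\ge N$. The key extra input is that the map $Q(X',Y')\mapsto Q(\psi^kX',\psi^kY')$ is a change of marking. It therefore sends pleating laminations to their $\psi^k$-images, so
\[
[\pl_\pm(Q(\psi^kX',\psi^kY'))]=\psi^k[\pl_\pm(Q(X',Y'))].
\]
We would like to apply this with $k$ of either sign, so that both ends are pushed outward simultaneously. This is where an asymmetric version is needed. We would instead consider $Q(\psi^{-n}X,\psi^{n}Y)$ for $Y$ in a region and use an enlarged family of neighbourhoods, $U_\pm^{(k)}:=\psi^{\pm k}U_\pm$, for $k\ge 0$. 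These form nested neighbourhoods whose defining edges are $\psi^{\pm k}e_0^\pm$, and these edges lie $k$ fundamental domains further along the axis. The goal is to show that for $n\ge N+k$ we have $[\pl_{\pm,n}]\in U^{(k)}_\pm$ with $k\approx n-N$. Once this holds, the geodesic $[\pl_{-,n},\pl_{+,n}]$ crosses both $\psi^{-k}e_0^-$ and $\psi^{k}e_0^+$. Since the tree is uniquely geodesic, it must then contain the whole axis segment between these two edges, and that segment has length $2k$ fundamental domains plus a constant. Replacing $n$ by $n+1$ increases $k$ by one, which gives two additional fundamental domains, one at each end.

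\textbf{Step 3: the main obstacle.} The difficulty is that pleating laminations depend on both ends jointly. Moving only one end by $\psi$ is not a symmetry, so pushing $[\pl_{+,n}]$ by $\psi$ requires control that is uniform in the other end. The approach we would try first uses the singly degenerate limits and the continuity of the extended Keen--Series coordinates (Remark~\ref{rmk:KS}). For the $+$ end, we would write $Q_n=\psi^{-n}\cdot Q(\psi^{-2n}X,X)$. We would then show that $[\pl_+(Q(Z,X))]$ lies in a fixed compact subset of $\hat{\mathbb R}\setminus\{[\Lambda_-]\}$ for all $Z$ near $[\Lambda_-]$. By continuity this reduces to showing that $\pl_+(Q([\Lambda_-],X))\neq[\Lambda_-]$, which holds because a pleating lamination of the convex core boundary facing the conformal end cannot equal the ending lamination. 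Given this, North--South dynamics moves the compact set by $\psi^{n}$ deep into $U_+$. This happens at the linear rate of one fundamental domain per iterate, because $\psi$ translates the axis by exactly $l$ edges, and any point whose projection to the axis is bounded has its projection translated by $nl$. The argument for the $-$ end is symmetric. Taken together, the two ends yield the increment of at least two fundamental domains.
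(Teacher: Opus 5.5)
Your proposal is correct. Step~1 matches the paper, but after that you take a different route.

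\textbf{The paper's route.} The paper keeps the two ends together. The implication in Lemma~\ref{lem:pl-lam-control} holds for \emph{every} pair in $V_-\times V_+$, which is exactly the uniformity in the opposite end that you flag as the obstacle in Step~3. The paper combines this with equivariance and the nesting of $V_\pm$ under $\psi^{\pm1}$ to get $[\pl_\pm(Q_{n+1})]\in\psi^{\pm1}(U_\pm)$. Your Step~2 goal in fact follows directly this way: $[\pl_+(Q_n)]=\psi^{k}[\pl_+(Q(\psi^{-(n+k)}X,\psi^{n-k}X))]\in\psi^{k}(U_+)$ whenever $n-k\ge N$, and symmetrically for $\pl_-$, so $k=n-N$ works.

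\textbf{Your route.} Step~3 instead fixes one end at $X$ via $[\pl_{+,n}]=\psi^{n}[\pl_+(Q(\psi^{-2n}X,X))]$. It then passes to the singly degenerate limit $Q([\Lambda_-],X)$ and uses non-realizability of ending laminations to get $[\pl_+(Q([\Lambda_-],X))]\neq[\Lambda_-]$. Finally it uses that $\psi$ shifts nearest-point projections onto the axis by exactly $l$ edges.

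\textbf{What each buys.} Your route gives an explicit statement: the intersection contains $2nl-O(1)$ consecutive axis edges, which is exactly what Definition~\ref{def: scope_of_inflexibility} needs. It rests only on the singly degenerate convergence along an orbit, which the paper uses anyway in Lemmas~\ref{lem:independence_of_left_boundary} and~\ref{lem:boundary_intersection_bound}. It does not need the joint convergence to the doubly degenerate limit behind Lemma~\ref{lem:pl-lam-control}, so your Step~1 appeal to that lemma can be dropped. The paper's route is softer, in that it never has to identify the limiting pleating lamination.

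\textbf{Small corrections.}
First, with the paper's convention $\psi^kQ(X',Y')=Q(\psi^kX',\psi^kY')$ one has $Q_n=\psi^{n}\cdot Q(\psi^{-2n}X,X)$, not $\psi^{-n}\cdot Q(\psi^{-2n}X,X)$. Your later use of $\psi^n$ is the right one.

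Second, you only need the limit along $Z=\psi^{-2n}X$, which is what the cited strong convergence results supply. ``For all $Z$ near $[\Lambda_-]$'' is a stronger continuity claim that you do not need.

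Third, the projections of your compact set are bounded only on the $\Lambda_-$ side, which is all you use.

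Your reason for $[\pl_+(Q([\Lambda_-],X))]\neq[\Lambda_-]$ is sound: $\Lambda_-$ is minimal and filling, so equal projective classes would force equal supports. Alternatively, the positivity $i(\bar{\Lambda}_-,\pl_+(Q_{-\infty,0}))>0$ established in Lemma~\ref{lem:independence_of_left_boundary} gives it immediately.
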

\begin{proof}

    Let us first recall the map $F$ from $\pml \cong \hat{\mathbb R}$ 
    to the union of the Farey tree and its space of ideal ends.
    For a rational point $p/q \in \hat{\mathbb Q}$, 
    which corresponds to a simple closed curve, 
    $F$ sends it to the corresponding vertex of the Farey tree. 
    For an irrational point $x \in \hat{\mathbb R} \setminus \hat{\mathbb Q}$, 
    which corresponds to a uniquely ergodic lamination, 
    $F$ sends it to the unique ideal end in the Farey tree determined 
    by the infinite sequence of Farey triangles shrinking toward $x$.
    Note that the map $F$ is not continuous but respects continued fractions. 
 
    Under this map, 
    for a sufficiently small open interval around an irrational point, 
    the image under $F$ is contained in a semi-infinite branch of the tree.
    Therefore, we can choose neighborhoods $U_\pm$ of $[\Lambda_\pm]$ in $\pml$ such that  
        \[
            F(U_\pm) \subset \text{a connected component of the Farey tree with a single edge removed}.
        \]
    Let $d_\pm$ denote the endpoints of the edge bounding $F(U_\pm)$ (see Figure \ref{fig:cmbgeominflex}). 
    Notice that $d_\pm$ lie on the translation axis of $\psi$ in the Farey tree. 
    Then, by Lemma \ref{lem:pl-lam-control}, there exist open sets $V_\pm \subset \To$ such that 
        \[
            X \in V_- \text{ and } Y \in V_+ \implies [\pl_{\pm} (Q(X, Y)) ] \in U_\pm. 
        \]
    Because $\mathrm{cl} (V_\pm)$ are neighborhoods of $[\Lambda_\pm]$, there exists an integer $N \in \mathbb N$ such that 
        \[
            \psi^{\pm n} (X) \in V_\pm \quad ( \forall n \geq N ).
        \]
    By the construction of $V_\pm$, we obtain 
        \[
            [\pl_{\pm, n} ] \in U_\pm.
        \]

    Assume $n > N$. 
    The geodesic $[\Lambda_-, \Lambda_+]$ intersects $F(U_\pm)$. 
    Because $[\Lambda_-, \Lambda_+]$ is a geodesic in a tree, the segment $[d_-, d_+]$ is contained in $[\Lambda_-, \Lambda_+]$:
        \[
            [ d_-, d_+]\subset [\Lambda_-, \Lambda_+].
        \]
    Similarly, the condition $[\pl_{\pm, n} ] \in U_\pm$ guarantees that 
    the geodesic connecting $[\pl_{-, n}]$ and $[\pl_{+, n}]$ contains the segment $[d_-, d_+]$:
        \[
            [ d_-, d_+]\subset[\pl_{-, n}, \pl_{+, n}].
        \]
    Consequently, the segment $[d_-, d_+]$ is contained in the intersection of the two geodesics.
        \[
            [ d_-, d_+]\subset[\pl_{-, n}, \pl_{+, n}] \cap [\Lambda_-, \Lambda_+].
        \]

    Next, consider the quasi-Fuchsian manifold $Q_{-n + 1, n + 1}$. The end invariants of the manifold satisfy 
        \[
            \psi^{\pm (n + 1)} (X) \in \psi^{\pm 1}(V_\pm).
        \]
    By the equivariance $\psi(\pl_\pm (Q(X, Y))) = \pl_\pm (Q(\psi(X), \psi(Y)))$, we obtain 
        \[
            [\pl_{\pm}(Q_{-n + 1, n + 1})] \in F(\psi(U_{\pm})) = \psi(F( U_{\pm} ) ).
        \]
    The set $\psi(F(U_\pm))$ is a region bounded by $\psi(d_\pm)$ and contains $\Lambda_\pm$ at infinity. 
    Since geodesics in a tree are unique, the segment $[d_-,d_+]$ is contained in $[\Lambda_-,\Lambda_+]$:  
        \[
            [ \psi(d_-), \psi(d_+)] \subset [\pl_{-}(Q_{-n + 1, n + 1}), \pl_{+}(Q_{-n + 1, n + 1})].
        \]
    Applying $\psi^{-1}$ instead of $\psi$ yields a symmetric inclusion.
        \begin{gather*}
             [\pl_{\pm}(Q_{-n - 1, n - 1})] \in F ( \psi^{-1}(U_{\pm} ) ) = \psi^{-1}( F( U_{\pm} ) ), \\
             [\psi^{-1}(d_-), \psi^{-1}(d_+)] \subset [\pl_{-}(Q_{-n - 1, n - 1}), \pl_{+}(Q_{-n - 1, n - 1})].
        \end{gather*}

    By Corollary \ref{coro:NSdym}, the nesting properties hold:
        \[
            \psi^{-1}(V_-) \subset V_- \subset \psi(V_-), \quad \psi(V_+) \subset V_+ \subset \psi^{-1}(V_+).
        \]
    Combining the nesting properties with the relations $\psi^{-(n+1)}(X) \in \psi^{-1}(V_-)$ and $\psi^{n+1}(X) \in \psi(V_+)$ yields 
        \[
            [ \pl_{\pm}(Q_{n+1}) ] \in \psi^{\pm 1} (F( U_{\pm} )).
        \]
    Consequently, the geodesic $[ \pl_-(Q_{n+1}), \pl_+(Q_{n+1})]$ connects two points contained in $\psi^{-1}(F( U_{-} ))$ and $\psi(F( U_{+} ))$, respectively.
    Therefore, the geodesic must pass through $\psi^{-1}(d_-)$ and $\psi(d_+)$, which implies the following inclusion.
        \[
            [\psi^{-1}(d_-), \psi(d_+)] \subset [ \pl_-(Q_{n+1}), \pl_+(Q_{n+1})]. 
        \]

    Finally, because $d_\pm$ lie on the translation axis of $\psi$, 
    the segment $[ \psi^{-1}(d_-), d_- ]$ is a fundamental domain for the action of $\psi$ on the specified axis (and similarly for $[d_+, \psi (d_+)]$). 
    Furthermore, the inclusion $ [\psi^{-1}(d_-), \psi (d_+)]\subset[\Lambda_-, \Lambda_+]$ yields 
        \begin{align*}
            [ \pl_-(Q_{n+1}), \pl_+(Q_{n+1})] \cap [\Lambda_-, \Lambda_+] & \supset  [\psi^{-1}(d_-), \psi(d_+)] \\
            & =  [\psi^{-1}(d_-), d_-] \cup [ d_-, d_+ ] \cup [ d_+ , \psi(d_+)]. 
        \end{align*}
    The preceding inclusion demonstrates that the intersection length increases by at least two fundamental domains of $\psi$ upon incrementing $n$. 
    
    By choosing $U_-$ and $U_+$ sufficiently small, we may assume that the finite
    segment $[d_-,d_+]$ contains the fixed base vertex $o$.
    Hence the connected component of
    \[
        [\pl_{-,n},\pl_{+,n}]\cap[\Lambda_-,\Lambda_+]
    \]
    containing $o$ contains $[d_-,d_+]$.

\end{proof}

\subsection{Estimates of intersection numbers}
To prove Theorem \ref{thm:main}, we use Gu\'eritaud's results \cite{Gue06,Gue09}.
First, we prepare several estimates for the intersection numbers $\phi_i^\pm$ introduced in Subsection \ref{subsec.gueritaud}.

\begin{lem}\label{lem:intersectionnum-control}
    Let $\gamma$ be an essential simple closed curve. 
    For any $\varepsilon > 0$, there exist neighborhoods $U_\pm$ of $[\Lambda_\pm]$ in $\pml$ such that for any projective class $[\mu_\pm] \in U_\pm$ and any representative measured lamination $\mu_\pm$ of the class $[\mu_\pm]$, we have
        \[
            \left| \log \frac{i( \gamma, \psi \mu_\pm )} {i( \gamma, \mu_\pm )} \mp \log \dil(\psi) \right| < \varepsilon.
        \]
\end{lem}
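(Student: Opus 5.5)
The plan is to reduce the statement to continuity of a single scale-invariant function on $\pml$ and to evaluate that function at the fixed points $[\Lambda_\pm]$. For a measured lamination $\mu$ with $i(\gamma,\mu)\neq 0$, set
\[
f_\gamma(\mu) := \frac{i(\gamma,\psi\mu)}{i(\gamma,\mu)}.
\]
Geometric intersection is homogeneous of degree one in each argument, and $\psi$ acts linearly on $\ml$, so $\psi(c\mu)=c\,\psi\mu$. Hence $f_\gamma(c\mu)=f_\gamma(\mu)$ for all $c>0$, and $f_\gamma$ descends to a function on the subset of $\pml$ where $i(\gamma,\cdot)\neq 0$. This already handles the clause ``for any representative $\mu_\pm$'' in the statement.

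Next I check that $f_\gamma$ is defined and continuous near $[\Lambda_\pm]$. The only projective class $[\mu]$ with $i(\gamma,\mu)=0$ is $[\gamma]$ itself, which is a rational point of $\hat{\mathbb R}$. The classes $[\Lambda_\pm]$ are irrational, so $i(\gamma,\Lambda_\pm)>0$.

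Continuity can be verified concretely in the coordinates of Subsection~\ref{subsec.gueritaud}. Write $\gamma=\xi/\eta$ and represent $\mu$ by $(\alpha,\beta)$. Then
\[
i(\gamma,\mu)=\left|\det\begin{pmatrix}\beta&\eta\\ \alpha&\xi\end{pmatrix}\right|,
\]
which is the absolute value of a linear form. By Theorem~\ref{thm:id_t_to_h}, $\psi$ acts on $(\alpha,\beta)$ through the integral matrix $H(\psi)$, so $i(\gamma,\psi\mu)$ is also the absolute value of a linear form. Thus $f_\gamma$ is a quotient of continuous functions, with nonvanishing denominator on a neighborhood of $[\Lambda_\pm]$ (for example, the complement of $[\gamma]$), and is therefore continuous there. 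Alternatively, one can simply cite continuity of $i$ on $\ml\times\ml$ and of the $\mcg$-action.

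The value at the fixed points follows from $\psi\Lambda_\pm=\dil(\psi)^{\pm1}\Lambda_\pm$:
\[
f_\gamma(\Lambda_\pm)=\frac{\dil(\psi)^{\pm1}\, i(\gamma,\Lambda_\pm)}{i(\gamma,\Lambda_\pm)}=\dil(\psi)^{\pm1}.
\]
Consequently, $\log f_\gamma$ is continuous near $[\Lambda_\pm]$ and takes the value $\pm\log\dil(\psi)$ there.

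Given $\varepsilon>0$, take
\[
U_\pm := \{[\mu] : |\log f_\gamma([\mu]) \mp \log\dil(\psi)|<\varepsilon\},
\]
intersected with the complement of $[\gamma]$. These are open neighborhoods of $[\Lambda_\pm]$, and the inequality in the statement holds on them by construction.

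The only point needing care is that the denominator does not vanish, and this is handled by the irrationality of $[\Lambda_\pm]$. I expect no serious obstacle; the argument is a continuity argument throughout. If the authors intended the sign convention with $\psi\Lambda_+=\dil(\psi)^{-1}\Lambda_+$ instead, only the assignment of $\pm$ would change, not the argument.
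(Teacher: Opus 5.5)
Your proposal is correct and is essentially the paper's own proof. Both arguments use the same ingredients: the scale-invariance of the ratio, $i(\gamma,\Lambda_\pm)>0$ by irrationality, continuity of the intersection number and of the $\psi$-action near $[\Lambda_\pm]$, the evaluation of the ratio at $\Lambda_\pm$ as $\dil(\psi)^{\pm1}$, and then taking $U_\pm$ as a preimage of an open interval. Your explicit linear-form check of continuity is a harmless concrete supplement to the paper's direct appeal to continuity of $i(\cdot,\cdot)$.
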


\begin{proof}
    
    Since $\Lambda_\pm$ are irrational, $i ( \gamma, \Lambda_\pm ) > 0$ for any essential simple closed curve $\gamma$. 
    Therefore, by the continuity and homogeneity of the intersection number, the function
        \[
            f ( [ \mu_\pm ] ) := \log \frac{i( \gamma, \psi \mu_\pm )} {i( \gamma, \mu_\pm )}
        \]
    is a well-defined continuous map on a neighborhood of $[\Lambda_\pm]$ in $\pml$, independent of the choice of representative $\mu_\pm \in [\mu_\pm]$. 
    Furthermore, since $f ( [ \Lambda_\pm ] ) = \pm \log \dil(\psi)$, the conclusion follows directly from the continuity of $f$.
\end{proof}

\begin{prop} \label{prop: growth_in_intersectionnum}
    Let $\gamma$ be an essential simple closed curve. 
    For any $\varepsilon > 0$, there exist neighborhoods $V_\pm \subset \overline{\T}^{\mathrm{Th}}$ of $[\Lambda_\pm]$ such that for any $X \in V_-$ and $Y \in V_+$, we have
        \begin{align*}
             & \left| \log \frac{i( \gamma, \pl_-(Q(X,  Y)) )} {i( \gamma, \pl_-(Q(\psi^{-1}X, Y)))} +
             \log \dil(\psi) \right| < \varepsilon,
             \ \text{and} \\
             & \left| \log \frac{i( \gamma, \pl_+(Q(X, \psi Y) ) )} {i( \gamma, \pl_+(Q(X, Y)))} -
             \log \dil(\psi) \right| < \varepsilon. 
        \end{align*}
\end{prop}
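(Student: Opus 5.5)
We reduce the statement to Lemma~\ref{lem:intersectionnum-control} by using equivariance of the pleating laminations under $\psi$. The key identity is
\[
\pl_\pm(Q(\psi X,\psi Y)) = \psi\,\pl_\pm(Q(X,Y))
\]
as measured laminations. Pleating measures are given by bending angles, so they are preserved by the marking change and there is no rescaling.

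\textbf{First inequality.} Apply the identity with $X$ replaced by $\psi^{-1}X$ and $Y$ replaced by $\psi^{-1}Y$. This gives
\[
\pl_-(Q(X,Y)) = \psi\,\pl_-(Q(\psi^{-1}X,\psi^{-1}Y)).
\]
The denominator, however, is $\pl_-(Q(\psi^{-1}X,Y))$, not $\pl_-(Q(\psi^{-1}X,\psi^{-1}Y))$. We therefore write the ratio as the product of two factors,
\[
\frac{i(\gamma,\pl_-(Q(X,Y)))}{i(\gamma,\psi\,\pl_-(Q(\psi^{-1}X,\psi^{-1}Y)))}
\cdot
\frac{i(\gamma,\psi\mu)}{i(\gamma,\mu)}\cdot
\frac{i(\gamma,\pl_-(Q(\psi^{-1}X,\psi^{-1}Y)))}{i(\gamma,\pl_-(Q(\psi^{-1}X,Y)))},
\]
with $\mu=\pl_-(Q(\psi^{-1}X,\psi^{-1}Y))$. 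The first factor equals $1$ by equivariance. The middle factor is handled by Lemma~\ref{lem:intersectionnum-control}. Choose neighborhoods $U_\pm$ for $\varepsilon/2$. Then use Lemma~\ref{lem:pl-lam-control}, together with the fact that $\psi^{\pm1}$ maps small neighborhoods of $[\Lambda_\pm]$ into themselves (Corollary~\ref{coro:NSdym}). Shrinking $V_\pm$ so that $\psi^{-1}V_\pm$ and $V_\pm$ both satisfy the hypotheses, we get $[\mu]\in U_-$, and the middle factor has logarithm within $\varepsilon/2$ of $-\log\dil(\psi)$. Here the sign comes from applying the lemma to $\mu_-$.

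The last factor compares the $-$ pleating lamination, as a genuine measured lamination, of two quasi-Fuchsian manifolds with the same $-$ end $\psi^{-1}X$ and $+$ ends $\psi^{-1}Y$ and $Y$. Both $+$ ends lie near $[\Lambda_+]$. We need this factor to have logarithm within $\varepsilon/2$ of $0$.

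\textbf{Main obstacle.} The difficulty is this last factor: we need continuity of the full measured lamination $\pl_-$, including its length or measure, not just its projective class, as the $+$ end tends to $[\Lambda_+]$ while the $-$ end stays in a bounded region. I would argue via Remark~\ref{rmk:KS} and the continuity of the extended Keen--Series coordinates cited in the proof of Lemma~\ref{lem:pl-lam-control} (\cite[Theorem 15, Proposition 14]{KeeSer04}, \cite[Theorem 3.11]{McM96}). As $Y\to[\Lambda_+]$ with $X$ fixed, $Q(X,Y)$ converges to the singly degenerate $Q(X,[\Lambda_+])$, and $\pl_-$ converges as a measured lamination, with positive length. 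Hence $i(\gamma,\pl_-)$ converges to a positive limit.

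Uniformity is the subtle point, since $X$ also varies in $V_-$, which is unbounded. To handle it, I would first prove the statement for $X$ in a compact set. Then I would extend by using the $\psi$-equivariance in the $-$ coordinate, or the fact that the $-$ pleating data is governed near $[\Lambda_-]$ in the same way. If this uniformity proves too hard, I would restrict to the only case needed later, namely $X=\psi^{-n}X_0$ and $Y=\psi^{n}X_0$. There equivariance reduces the ratio to comparing $Q_{-n,n}$ with $Q_{-n-1,n-1}$ or $Q_{-n,n+1}$, and the limit $Q(\,[\Lambda_-],[\Lambda_+]\,)$-type convergence of \cite{McM96} controls it.

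\textbf{Second inequality.} This is symmetric: use $\pl_+(Q(X,\psi Y))$ together with $\psi\,\pl_+(Q(\psi^{-1}X,Y))$, and apply Lemma~\ref{lem:intersectionnum-control} with the $+$ sign.
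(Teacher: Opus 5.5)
Your route is essentially the paper's, with the steps reordered. Both use the same three ingredients:
\begin{itemize}
\item the equivariance $\pl_\pm(Q(\psi X,\psi Y))=\psi\,\pl_\pm(Q(X,Y))$;
\item the measured-lamination convergence $\pl_-(Q(X,Y))\to\pl_-(Q(X,[\Lambda_+]))$ as $Y\to[\Lambda_+]$, taken from \cite{McM96,KeeSer04};
\item Lemma~\ref{lem:intersectionnum-control}.
\end{itemize}
The paper first lets $Y\to[\Lambda_+]$ and then applies equivariance to the singly degenerate groups. There $\psi[\Lambda_+]=[\Lambda_+]$ gives $\pl_-(Q(\psi^{-1}X,[\Lambda_+]))=\psi^{-1}\pl_-(Q(X,[\Lambda_+]))$ exactly. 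You apply equivariance at the quasi-Fuchsian level instead. As a result your dilatation factor is controlled uniformly: it only needs $[\mu]\in U_-$, which Lemma~\ref{lem:pl-lam-control} supplies for all admissible $X,Y$. The whole error is then isolated in a single far-end-independence ratio. That bookkeeping is sound and slightly cleaner. One small correction: the limit $i(\gamma,\pl_-(Q(\psi^{-1}X,[\Lambda_+])))$ is positive because its projective class lies near $[\Lambda_-]$, and so differs from $[\gamma]$ once $U_-$ avoids $[\gamma]$. It is not positive because of the Keen--Series length.

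The step you flag, uniformity over $X\in V_-$, is a genuine gap, and the paper does not close it either. The paper chooses $V_+$ from the convergence \eqref{eq: take nbd} at a fixed $X$, so its $V_+$ also depends on $X$.

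Your proposed repair does not work as stated, for two reasons.
\begin{itemize}
\item Pulling back by $\psi^{k}$ replaces $\gamma$ by $\psi^{k}\gamma$. A compact-set estimate for the fixed curve $\gamma$ therefore does not transfer.
\item For compact $K$, the union $\bigcup_k\psi^{-k}K$ only fills a bounded neighbourhood of the Teichm\"uller axis of $\psi$, not a neighbourhood of $[\Lambda_-]$ in $\overline{\T}^{\mathrm{Th}}$.
\end{itemize}
Your fallback also aims at the wrong limit. The doubly degenerate $Q([\Lambda_-],[\Lambda_+])$ has no convex-core boundary, so it cannot control $\pl_-$.

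What does work is the mechanism of Lemma~\ref{lem:independence_of_left_boundary}, for $X,Y$ within bounded distance of the axis. This covers $X=\psi^{-n}X_0$, $Y=\psi^nX_0$, which is the only case used in Proposition~\ref{prop: growth_in_num}. The steps are:
\begin{itemize}
\item Write $\psi^{-1}X=\psi^{-j}X'$ with $X'$ in a compact set.
\item Pull back by $\psi^{j}$. Your third factor becomes $i(\psi^{j}\gamma,\pl_-(Q(X',\psi^{j-1}Y)))/i(\psi^{j}\gamma,\pl_-(Q(X',\psi^{j}Y)))$.
\item Normalize so that $c_j\psi^{j}\gamma\to\bar{\Lambda}_+$.
\item Use that both pleating laminations converge to $\pl_-(Q(X',[\Lambda_+]))$. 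This must hold uniformly in $X'$, which needs joint continuity of the extended coordinates.
\item Use $i(\bar{\Lambda}_+,\pl_-(Q(X',[\Lambda_+])))>0$. This is proved as in that lemma and is bounded below by compactness of the set of $X'$.
\end{itemize}
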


\begin{proof}
    Fix $\varepsilon>0$ and apply Lemma~\ref{lem:intersectionnum-control}
    with $\varepsilon/2$ in place of $\varepsilon$. 
    This gives neighborhoods $U_\pm$ of $[\Lambda_\pm]$ in $\pml$.
    Next, let $V'_\pm$ be the neighborhoods obtained from Lemma \ref{lem:pl-lam-control}, 
    and set $V_\pm := \psi^{\pm 1} (V'_\pm)$. 
    Note that $\psi(V_-) \subset V'_-$. 
    
    As in the proof of Lemma~\ref{lem:pl-lam-control}, we use the boundary
    continuity of pleating laminations for punctured-torus groups. 
    By the results of McMullen~\cite[Theorem 3.11]{McM96} and
    Keen--Series~\cite[Theorem 15, Proposition 14]{KeeSer04}, 
    as $Y \to [\Lambda_+]$ in $\overline{\T}^{\mathrm{Th}}$, we have
        \begin{equation}\label{eq: take nbd}
            \pl_-(Q(\psi^{-1}X, Y)) \to \pl_-(Q(\psi^{-1}X, [\Lambda_+])) 
            \quad \text{and} \quad 
            \pl_-(Q(X, Y)) \to \pl_-(Q(X, [\Lambda_+])).
        \end{equation}
    
    Next, since $\Lambda_+$ is an irrational lamination, it cannot be parallel to the simple closed curve $\gamma$. Because of this transversality between $\gamma$ and $\Lambda_+$, we see that
        \[
             \log \frac{i( \gamma, \pl_-(Q(X, Y)) )} {i( \gamma, \pl_-(Q(\psi^{-1}X, Y)))} \to 
             \log \frac{i( \gamma, \pl_-(Q(X, [\Lambda_+])) )} {i( \gamma, \pl_-( Q(\psi^{-1} X, [ \Lambda_+ ])) )} \quad ( \text{as } Y \to  [ \Lambda_+ ] ).
        \]
    Since both numerator and denominator scale in the same way, 
    the ratio is well-defined on projective classes. 
    
    Furthermore, by equivariance, we have
        \begin{align*}
             \log \frac{i( \gamma, \pl_-(Q(X, [\Lambda_+])) )} 
             {i( \gamma, \pl_-( Q(\psi^{-1} X, [ \Lambda_+ ])) )}  
             &= 
             \log \frac{i( \gamma, \pl_-(Q(X, [\Lambda_+])) )} 
             {i( \gamma, \psi^{-1} \pl_-( Q(X, \psi [ \Lambda_+ ])) )} \\ 
             &= \log \frac{i( \gamma, \pl_-(Q(X, [\Lambda_+])) )} 
             {i( \gamma, \psi^{-1} \pl_-( Q(X, [ \Lambda_+ ])) )}, 
        \end{align*}
    where the last equality follows from the invariance $\psi[\Lambda_+] = [\Lambda_+]$.
    
    By Lemma \ref{lem:intersectionnum-control} applied to $\pl_-(Q(X, [\Lambda_+]))$, we obtain
        \[
            \left| \log \frac{i( \gamma, \pl_-(Q(X, [\Lambda_+])) )} 
             {i( \gamma, \psi^{-1} \pl_-( Q(X, [ \Lambda_+ ])) )} + \log \dil(\psi) \right| < \varepsilon /2.
        \]
    Finally, by the triangle inequality, we conclude
        \begin{align*}
            & \left| \log \frac{i( \gamma, \pl_-(Q(X, Y)) )} {i( \gamma, \pl_-(Q(\psi^{-1}X, Y)))} 
            + \log \dil(\psi)
            \right| \\
            & \leq 
            \left| \log \frac{i( \gamma, \pl_-(Q(X, Y)) )} {i( \gamma, \pl_-(Q(\psi^{-1}X, Y)))} 
            - 
            \log \frac{i( \gamma, \pl_-(Q(X, [\Lambda_+])) )} 
            {i( \gamma, \pl_-( Q(\psi^{-1} X, [ \Lambda_+ ])) )} 
            \right| \\
            & \quad + 
            \left| 
            \log \frac{i( \gamma, \pl_-(Q(X, [\Lambda_+])) )} 
            {i( \gamma, \pl_-( Q(\psi^{-1} X, [ \Lambda_+ ])) )} 
            + \log \dil(\psi)
            \right| \\
            &< \varepsilon,
        \end{align*}
    where $V_+$ is chosen sufficiently small so that, by the convergence established in \eqref{eq: take nbd}, 
    the first term in the sum is bounded by $\varepsilon/2$.
\end{proof}

\begin{lem} \label{lem:independence_of_left_boundary}
    Let $\gamma$ be an essential simple closed curve. Then, 
    \[
        \lim_{n \to \infty} \frac{i(\gamma, \pl_+(Q_{-n-1, n+1}))}{i(\gamma, \pl_+(Q_{-n, n+1}))} = 1.
    \]
\end{lem}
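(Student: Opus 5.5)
Write $Q_{-n-1,n+1} = Q(\psi^{-n-1}X, \psi^{n+1}X)$ and $Q_{-n,n+1} = Q(\psi^{-n}X, \psi^{n+1}X)$. The two manifolds differ only in the conformal structure on the negative end, and we compare the pleating lamination on the \emph{positive} side. The plan is to show that, as $n\to\infty$, both $[\pl_+(Q_{-n-1,n+1})]$ and $[\pl_+(Q_{-n,n+1})]$ become close to a common limit in $\pml$. Moreover this should hold with normalizations (the pleating measures, or lengths) that also converge, so that the ratio of intersection numbers with the fixed curve $\gamma$ tends to $1$.

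\textbf{Reduction by equivariance.} Apply $\psi^{-(n+1)}$ to both manifolds; intersection numbers with $\gamma$ become intersection numbers with $\psi^{-(n+1)}\gamma$, which is awkward. Instead we work directly in the Keen--Series coordinates. Set $Y_n := \psi^{n+1}X \to [\Lambda_+]$, and note that both $\psi^{-n}X$ and $\psi^{-n-1}X$ tend to $[\Lambda_-]$ in $\overline{\T}^{\mathrm{Th}}$. By the boundary continuity of pleating invariants used in the proofs of Lemma~\ref{lem:pl-lam-control} and Proposition~\ref{prop: growth_in_intersectionnum} (McMullen \cite[Theorem 3.11]{McM96}, Keen--Series \cite[Theorem 15, Proposition 14]{KeeSer04}), letting the negative end go to $[\Lambda_-]$ while keeping $Y$ fixed gives
\[
\pl_+(Q(X',Y)) \to \pl_+(Q([\Lambda_-],Y)) \qquad (X' \to [\Lambda_-]).
\]
This convergence holds as measured laminations, not just projectively. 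This is the same mechanism as \eqref{eq: take nbd}, with the roles of the ends swapped.

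\textbf{Main step: uniformity in the positive end.} The convergence above is for fixed $Y$, but here $Y=Y_n$ moves too. We therefore pull back by $\psi^{-(n+1)}$ after all, using the equivariance $\psi^{-(n+1)}\pl_+(Q(A,B)) = \pl_+(Q(\psi^{-(n+1)}A,\psi^{-(n+1)}B))$. The two manifolds become $Q(\psi^{-2n-2}X, X)$ and $Q(\psi^{-2n-1}X, X)$, and
\[
i(\gamma,\pl_+(Q_{a,n+1})) = i\bigl(\psi^{-(n+1)}\gamma,\ \pl_+(Q(\psi^{a-n-1}X, X))\bigr).
\]
Both $\psi^{-2n-2}X$ and $\psi^{-2n-1}X$ tend to $[\Lambda_-]$, so both pleating laminations $\mu_n, \mu_n'$ converge, as measured laminations, to the same $\mu_\infty := \pl_+(Q([\Lambda_-],X))$.

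Next normalize $\psi^{-(n+1)}\gamma$. Since $\dil(\psi)^{-(n+1)}\psi^{-(n+1)}\gamma \to c\,\Lambda_-$ in $\ml$ with $c>0$, and numerator and denominator carry the same factor, the ratio equals
\[
\frac{i(\dil^{-(n+1)}\psi^{-(n+1)}\gamma,\ \mu_n)}{i(\dil^{-(n+1)}\psi^{-(n+1)}\gamma,\ \mu_n')}.
\]
By joint continuity of $i$ on $\ml\times\ml$, this tends to $i(c\Lambda_-,\mu_\infty)/i(c\Lambda_-,\mu_\infty)=1$, provided $i(\Lambda_-,\mu_\infty)\neq 0$.

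\textbf{Main obstacle.} The remaining issue is to verify $i(\Lambda_-,\pl_+(Q([\Lambda_-],X)))>0$ and to justify convergence of the pleating measures, not only of projective classes. For the first, $\Lambda_-$ is the ending lamination of the degenerate end, and $\pl_+$ is a measured lamination realized on the convex core boundary of a b-group. Since $\Lambda_-$ is filling and uniquely ergodic, zero intersection would force $[\pl_+]=[\Lambda_-]$, which is impossible because $\Lambda_-$ is not realizable in that manifold. For the second, the Keen--Series length coordinate tends to a positive limit (Remark~\ref{rmk:KS}: the coordinate $a>0$ on the nondegenerate side), which pins down the scale of the measures.
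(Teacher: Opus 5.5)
Your proof is correct, and its skeleton is the paper's. You pull back by $\psi^{-(n+1)}$ to fix the positive end at $X$. You then get convergence in $\ml$ of $\pl_+(Q_{-2n-1,0})$ and $\pl_+(Q_{-2n-2,0})$ to the common limit $\pl_+(Q_{-\infty,0})$ from McMullen and Keen--Series. Next you rescale $\psi^{-(n+1)}\gamma$; your factor $\dil(\psi)^{-(n+1)}$ is an explicit choice of the paper's $c_n$. Finally you conclude by joint continuity of $i$.

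The genuine difference is how you show $i(\Lambda_-,\pl_+(Q_{-\infty,0}))>0$.

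The paper argues dynamically. If the intersection vanished, pushing forward by $\psi^k$ would give $i(\bar\Lambda_-,\pl_+(Q_{-\infty,k}))=0$ for all $k$. As the positive end degenerates, $[\pl_+(Q_{-\infty,k})]\to[\Lambda_+]$, which yields $i(\bar\Lambda_-,\bar\Lambda_+)=0$, a contradiction.

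You argue differently. Zero intersection with the filling, uniquely ergodic $\Lambda_-$ forces $[\pl_+(Q_{-\infty,0})]=[\Lambda_-]$; on $S_{1,1}$ this is immediate from the determinant formula for $i$. That is impossible, because the bending lamination is realized on the convex core boundary, while the ending lamination of the degenerate end is not realizable.

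Your route is shorter and avoids a second limiting process, whose standard proof itself goes through unrealizability. The price is that you must quote the Thurston--Bonahon unrealizability of ending laminations. The paper's route stays within the continuity statements it already uses.

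One smaller point: your remark that the Keen--Series length coordinate pins down the scale implicitly uses continuity of lengths of realizable laminations under strong convergence. The paper instead takes convergence in $\ml$ directly from Keen--Series. This is the same black box, not a gap.
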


\begin{rmk}
    In other words, for sufficiently large $n$, varying the left boundary condition has asymptotically negligible effect on the pleating lamination of the right boundary.
\end{rmk}

\begin{proof}
    To fix the right boundary condition, we consider the pullback by the mapping class $\psi^{-(n+1)}$.
    Pulling back the two quasi-Fuchsian manifolds $Q_{-n, n+1}$ and $Q_{-n-1, n+1}$ via $\psi^{-(n+1)}$, we obtain
    \begin{align*}
        \psi^{-(n+1)}(Q_{-n, n+1}) &= Q_{-2n-1, 0}, \\
        \psi^{-(n+1)}(Q_{-n-1, n+1}) &= Q_{-2n-2, 0}.
    \end{align*}
    Note that the right boundary component is now fixed to be $X$. 
    
    As $n \to \infty$, both $\psi^{-2n-1}(X)$ and $\psi^{-2n-2}(X)$ converge to the unstable foliation $[\Lambda_-]$ in the Thurston boundary.
    Therefore, the corresponding sequence of Kleinian groups converges strongly to a limit group $\Gamma_\infty$ (\cite[Theorem 3.12]{McM96}).
    
    The limit group $\Gamma_\infty$ is a singly degenerate b-group, where one end is $X$ and the other is degenerate with ending lamination $[\Lambda_-]$.
    Since the conformal boundary component corresponding to $X$ lies in the domain of discontinuity, 
    the $X$-side boundary of the convex core of the corresponding quotient manifold $M_\infty = \mathbb{H}^3 / \Gamma_\infty$ 
    admits a pleating lamination $\pl_+(Q_{-\infty, 0} ) \in \ml$.
    
    By the continuity of the Keen--Series map, when a sequence in $\QF$ converges to a boundary point, the pleating lamination on the non-degenerate side also converges (\cite[Theorem 15, Proposition 14]{KeeSer04}). 
    Thus, we establish the following convergences in $\ml$:
    \begin{equation} \label{eq: convs in sin deg case}
        \lim_{n \to \infty} \pl_+(Q_{-2n-1, 0}) = \pl_+(Q_{-\infty, 0} )  \quad \text{and} \quad 
        \lim_{n \to \infty} \pl_+(Q_{-2n-2, 0}) = \pl_+(Q_{-\infty, 0} ).
    \end{equation}
    
    Here, using the invariance under the action of the mapping class group, the ratio can be rewritten as follows:
    \[
        \frac{i(\gamma, \pl_+(Q_{-n-1, n+1}))}{i(\gamma, \pl_+(Q_{-n, n+1}))} 
        = \frac{i(\psi^{-(n+1)}\gamma, \pl_+(Q_{-2n-2, 0}))}{i(\psi^{-(n+1)}\gamma, \pl_+(Q_{-2n-1, 0}))}.
    \]
    By the north--south dynamics of pseudo-Anosov maps, $[ \psi^{-(n+1)}\gamma]$ converges to the unstable foliation $[\Lambda_-]$ in $\pml$. 
    Therefore, there exists a sequence of positive numbers $c_n$ such that $c_n \psi^{-(n+1)}\gamma$ converges in $\ml$ 
    to a non-zero measured lamination $\bar{\Lambda}_-$ representing $[\Lambda_-]$.
    
    Using the homogeneity of the intersection number, the ratio satisfies
    \begin{equation*}
        \frac{i(\psi^{-(n+1)}\gamma,\pl_+(Q_{-2n-2, 0}))}{i(\psi^{-(n+1)}\gamma,  \pl_+(Q_{-2n-1, 0}))} 
        = \frac{i(c_n \psi^{-(n+1)}\gamma,\pl_+(Q_{-2n-2, 0}))}{i(c_n \psi^{-(n+1)}\gamma, \pl_+(Q_{-2n-1, 0}))}.
    \end{equation*}
    
    Combining the convergence $c_n \psi^{-(n+1)}\gamma \to \bar{\Lambda}_-$ with \eqref{eq: convs in sin deg case}, we obtain
    \[
        \lim_{n \to \infty} \frac{i(c_n \psi^{-(n+1)}\gamma, \pl_+(Q_{-2n-2, 0}))}{i(c_n \psi^{-(n+1)}\gamma, \pl_+(Q_{-2n-1, 0}))} 
        = \frac{i(\bar{\Lambda}_-, \pl_+(Q_{-\infty, 0}))}{i(\bar{\Lambda}_-, \pl_+(Q_{-\infty, 0}))}.
    \]
    
    It remains to show that
    \[
        i(\bar{\Lambda}_-,\pl_+(Q_{-\infty,0}))>0.
    \]
    Suppose, to the contrary, that this intersection number is zero. 
    For each $k\ge 0$, by the equivariance of the pleating lamination under the mapping class group, we have
    \[
        \psi^k\pl_+(Q_{-\infty,0})
        =
        \pl_+(Q_{-\infty,k}).
    \]
    Moreover, since $\psi^k\bar{\Lambda}_-$ is a positive scalar multiple of $\bar{\Lambda}_-$, the invariance and homogeneity of the intersection number imply that
    \[
        i(\bar{\Lambda}_-,\pl_+(Q_{-\infty,k}))=0
    \]
    for every $k\ge 0$. As $k\to\infty$, the sequence $\psi^k X$ converges to $[\Lambda_+]$ in the Thurston compactification. 
    By the continuity of the Keen--Series pleating map on the non-degenerate side, we have
    \[
        [\pl_+(Q_{-\infty,k})]\to [\Lambda_+]
        \quad\text{in }\pml.
    \]
    Choosing positive constants $a_k$ so that
    \[
        a_k\pl_+(Q_{-\infty,k})\to \bar{\Lambda}_+
    \]
    in $\ml$, we obtain by continuity of the intersection number
    \[
        0
        =
        \lim_{k\to\infty}
        i(\bar{\Lambda}_-,a_k\pl_+(Q_{-\infty,k}))
        =
        i(\bar{\Lambda}_-,\bar{\Lambda}_+).
    \]
    This contradicts the fact that 
    the stable and unstable laminations of a pseudo-Anosov mapping class 
    have positive intersection number. 
    Therefore
    \[
        i(\bar{\Lambda}_-,\pl_+(Q_{-\infty,0}))>0.
    \]

\end{proof}

\begin{prop}\label{prop: growth_in_num}
    Let $\gamma$ be an essential simple closed curve. 
    For any $\varepsilon > 0$, there exists an integer $N \in \mathbb N$ such that for all $n \geq N$,
        \[
            \left| \log \frac{i( \gamma, \pl_{\pm, n+1} )} {i( \gamma, \pl_{\pm, n} )}- \log \dil(\psi) \right| < \varepsilon.
        \]
\end{prop}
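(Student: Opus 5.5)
We treat the $+$ case; the $-$ case is symmetric after exchanging the roles of $\psi$ and $\psi^{-1}$ and of the two ends. The idea is to pass from $Q_n = Q_{-n,n}$ to $Q_{n+1}=Q_{-n-1,n+1}$ through the intermediate manifold $Q_{-n,n+1}$. This changes one end at a time, and we telescope:
\[
\log \frac{i(\gamma,\pl_{+,n+1})}{i(\gamma,\pl_{+,n})}
= \log \frac{i(\gamma,\pl_+(Q_{-n-1,n+1}))}{i(\gamma,\pl_+(Q_{-n,n+1}))}
+ \log \frac{i(\gamma,\pl_+(Q_{-n,n+1}))}{i(\gamma,\pl_+(Q_{-n,n}))}.
\]
It then suffices to make each term on the right small, or close to $\log\dil(\psi)$, for large $n$.

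The first term is exactly the quantity in Lemma~\ref{lem:independence_of_left_boundary}. That lemma says the ratio tends to $1$, so its logarithm is less than $\varepsilon/2$ in absolute value for all $n\ge N_1$.

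The second term has the form required by the second inequality of Proposition~\ref{prop: growth_in_intersectionnum}, with $X' = \psi^{-n}X$ and $Y' = \psi^n X$, since $Q_{-n,n+1} = Q(X',\psi Y')$ and $Q_{-n,n}=Q(X',Y')$. Apply that proposition with $\varepsilon/2$ to obtain neighborhoods $V_\pm$ of $[\Lambda_\pm]$ in $\overline{\T}^{\mathrm{Th}}$. By North--South dynamics (Corollary~\ref{coro:NSdym}), $\psi^{\pm n}X \in V_\pm$ for all $n\ge N_2$. Hence the second term lies within $\varepsilon/2$ of $\log\dil(\psi)$, and taking $N=\max(N_1,N_2)$ gives the claim by the triangle inequality.

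For the $-$ sign, we insert $Q_{-n-1,n}$ in the middle instead. The term $\log\bigl(i(\gamma,\pl_-(Q_{-n-1,n}))/i(\gamma,\pl_-(Q_{-n,n}))\bigr)$ is controlled by the first inequality of Proposition~\ref{prop: growth_in_intersectionnum}, applied with $X=\psi^{-n}X$ and the ratio inverted. This term is close to $+\log\dil(\psi)$, consistent with the statement carrying no $\mp$. The remaining term is the ratio $i(\gamma,\pl_-(Q_{-n-1,n+1}))/i(\gamma,\pl_-(Q_{-n-1,n}))$, which requires the mirror version of Lemma~\ref{lem:independence_of_left_boundary}: varying the right end has negligible effect on the left pleating lamination.

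The main obstacle is this mirror lemma. We would prove it by repeating the proof of Lemma~\ref{lem:independence_of_left_boundary} verbatim with $\psi$ replaced by $\psi^{-1}$. That is, we pull back by $\psi^{n+1}$ to get $Q_{0,2n+1}$ and $Q_{0,2n}$, use strong convergence to the b-group $Q_{0,\infty}$ and the Keen--Series continuity on the nondegenerate side, and then show $i(\bar\Lambda_+,\pl_-(Q_{0,\infty}))>0$ by the same contradiction argument using $i(\bar\Lambda_-,\bar\Lambda_+)>0$.
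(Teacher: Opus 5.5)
Your proof is correct and is essentially the paper's own argument. For the $+$ case the paper telescopes through the same intermediate manifold $Q_{-n,n+1}$, controls the first factor by Lemma~\ref{lem:independence_of_left_boundary} and the second by the second inequality of Proposition~\ref{prop: growth_in_intersectionnum}, and dismisses the $-$ case as ``analogous with the roles of the two ends interchanged,'' which is exactly the mirror argument you spell out (up to a harmless index slip: applying $\psi^{n+1}$ to $Q_{-n-1,n+1}$ and $Q_{-n-1,n}$ gives $Q_{0,2n+2}$ and $Q_{0,2n+1}$, not $Q_{0,2n+1}$ and $Q_{0,2n}$, which changes nothing since both right ends still tend to $[\Lambda_+]$).
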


\begin{proof}
    For sufficiently large $n$, the dynamical properties of $\psi$ on the Thurston boundary guarantee that 
    $\psi^{-n}X \in V_-$ and $\psi^n X \in V_+$, satisfying the requirement of Proposition \ref{prop: growth_in_intersectionnum}.

    Finally, we rewrite the ratio as follows:
        \begin{align*}
            \frac{i( \gamma, \pl_{+, n+1} )} {i( \gamma, \pl_{+, n} )} 
            &= \frac{i( \gamma, \pl_{+}(Q_{-n-1, n+1}) )} {i( \gamma, \pl_{+}(Q_{-n, n}) )} \\
            &= \frac{i( \gamma, \pl_{+}(Q_{-n-1, n+1}) )} {i( \gamma, \pl_{+}(Q_{-n, n+1}) )}
            \cdot \frac {i( \gamma, \pl_{+}(Q_{-n, n+1}) )} {i( \gamma, \pl_{+}(Q_{-n, n}) )}.
        \end{align*}
    The conclusion then follows 
    by applying Lemma \ref{lem:independence_of_left_boundary} to the first factor 
    and Proposition \ref{prop: growth_in_intersectionnum} to the second factor.

    The case of $\pl_{-,n}$ is proved analogously, 
    with the roles of the two ends interchanged.
\end{proof}

\begin{defi}[Scope of combinatorial inflexibility]
\label{def: scope_of_inflexibility}
By Theorem \ref{thm:combinatorial_inflexibility}, the number of fundamental
domains of $\psi$ contained in the intersection of the geodesics
\[
    [\pl_{-,n},\pl_{+,n}]
    \quad\text{and}\quad
    [\Lambda_-,\Lambda_+]
\]
grows by at least two each time $n$ is incremented. Consequently, after
increasing the error constant if necessary, there exists an integer
$B\geq 1$, independent of $n$, such that, for all sufficiently large $n$,
this intersection contains a symmetric segment consisting of at least
\[
    2n-2B=2(n-B)
\]
fundamental domains of $\psi$.

We define
\[
    n':=n-B.
\]
Thus, $n'$ is the number of fundamental domains that can be safely retained
on each side of the midpoint.
We call $n'$ {\em the scope of combinatorial inflexibility}.

Within this scope, we index the edges $\{e_i\}$ crossed by the geodesic
$[\pl_-(Q_n),\pl_+(Q_n)]$ so that the subsegment lying on the axis of $\psi$
corresponds to
\[
    i\in\{-n'l,-n'l+1,\ldots,-1,0,1,\ldots,n'l\}.
\]

For $Q_n=Q_{-n,n}$, let $\phi_i^\pm(n)$ denote the intersection numbers
defined in Subsection \ref{subsec.gueritaud}.
\end{defi}

We first bound the intersection numbers at the endpoint indices
$\pm n'l$ of the scope of combinatorial inflexibility.

\begin{lem}
\label{lem:boundary_intersection_bound}
There exist positive constants $C_{\min}$ and $C_{\max}$, independent of
$n$, such that, for all sufficiently large $n$,
\[
    C_{\min}
    \leq
    \phi_{-n'l}^-(n)
    \leq
    C_{\max}
\]
and
\[
    C_{\min}
    \leq
    \phi_{n'l}^+(n)
    \leq
    C_{\max}.
\]
Thus, the intersection numbers near the two endpoints of the scope of
combinatorial inflexibility are uniformly bounded from above and below by positive constants.
\end{lem}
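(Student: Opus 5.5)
The plan is to use the $\psi$-equivariance to move the endpoint of the scope of combinatorial inflexibility back to a fixed position. This turns each quantity into the intersection number of a \emph{fixed} curve with the pleating lamination of a sequence of quasi-Fuchsian manifolds. That sequence converges to a singly degenerate limit, whose pleating lamination on the geometrically finite side is a nonzero measured lamination. I treat $\phi^-_{-n'l}(n)$; the case of $\phi^+_{n'l}(n)$ is symmetric, with $\psi^{-n}$ and $Q_{-2n,0}\to Q_{-\infty,0}$ in place of $\psi^{n}$ and $Q_{0,2n}\to Q_{0,\infty}$.

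\emph{Step 1 (identify the curve).} By Theorem~\ref{thm:combinatorial_inflexibility} and Definition~\ref{def: scope_of_inflexibility}, the edges $e_i$ with $|i|\le n'l$ are edges of the axis $[\Lambda_-,\Lambda_+]$. Hence the vertex $q^{-n'l}_-$ entering the definition of $\phi^-_{-n'l}(n)$ is the axis vertex $q^{-n'l}_-(\infty)$. One caveat: $q^{-n'l}_-$ is defined as the vertex of the triangle containing $e_{-n'l}$ on the $\pl_{-,n}$ side. I would shrink the scope by one extra period, absorbing this into $B$, so that this triangle also lies on the common segment. By the periodicity proposition for $M_\psi$, $\psi^{n}(q^{-n'l}_-(\infty))=q^{-n'l+nl}_-(\infty)=q^{Bl}_-(\infty)=:q_*$. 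This is a fixed simple closed curve independent of $n$. The key point is that the indexing of Definition~\ref{def: scope_of_inflexibility} is symmetric about a fixed base vertex, so the shift by $nl$ is exact.

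\emph{Step 2 (equivariance and convergence).} Using invariance of $i$ under $\mcg$ and equivariance $\psi^{n}\pl_-(Q_{-n,n})=\pl_-(Q_{0,2n})$ as measured laminations (pleating measures are natural), we get
\[
\phi^-_{-n'l}(n)=i\bigl(q_*,\pl_-(Q_{0,2n})\bigr).
\]
As $n\to\infty$, $\psi^{2n}X\to[\Lambda_+]$. By McMullen's strong convergence \cite[Theorem 3.11]{McM96} and the continuity of the Keen--Series coordinates on the non-degenerate side \cite[Theorem 15, Proposition 14]{KeeSer04}, as already used in the proof of Lemma~\ref{lem:independence_of_left_boundary}, we have $\pl_-(Q_{0,2n})\to\pl_-(Q_{0,\infty})$ in $\ml$ itself, not merely projectively. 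The length coordinate of the limit is positive (Remark~\ref{rmk:KS}), so the limit is nonzero. By continuity of $i$, $\phi^-_{-n'l}(n)\to i(q_*,\pl_-(Q_{0,\infty}))<\infty$, which gives $C_{\max}$ for large $n$.

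\emph{Step 3 (positivity of the limit; the main obstacle).} We need $i(q_*,\pl_-(Q_{0,\infty}))>0$. This can fail only if $\pl_-(Q_{0,\infty})$ is a multiple of the curve $q_*$ itself. To avoid this case analysis I would use Proposition~\ref{prop:Gue_phi}(3): consecutive $\phi^-_i$ differ by a factor in $(1,2)$. Hence replacing the index $-n'l$ by $-n'l+k$ for a fixed $k\le l$ changes $\phi^-$ by at most a factor $2^{l}$, uniformly in $n$. Among the finitely many axis vertices $q^{Bl+k}_-(\infty)$ with $0\le k\le l$ there are at least two distinct curves, and a nonzero measured lamination on $S_{1,1}$ has zero intersection with at most one simple closed curve. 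So for some such $k$ the limit $i(q^{Bl+k}_-(\infty),\pl_-(Q_{0,\infty}))$ is positive. Running Step 2 for that vertex gives a positive lower bound for $\phi^-_{-n'l+k}(n)$ for large $n$, and the factor-$2^l$ comparison transfers it to $\phi^-_{-n'l}(n)$. The same comparison also harmlessly covers the indexing caveat of Step 1. Taking the minimum and maximum of the constants obtained for both ends yields $C_{\min}$ and $C_{\max}$.
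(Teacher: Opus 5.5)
Your proposal is correct, and Steps 1--2 are the paper's argument. The paper also pushes $q^{-n'l}_-$ forward by $\psi^n$ to the fixed curve $q^{Bl}_-$, writes $\phi^-_{-n'l}(n)=i(q^{Bl}_-,\pl_-(Q_{0,2n}))$, and uses $\pl_-(Q_{0,2n})\to\pl_-(Q_{0,\infty})$ to get a finite limit, with the symmetric argument at the $+$ end.

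The only real difference is how positivity of the limit is secured.

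\begin{itemize}
\item The paper enlarges $B$ once and for all, so that the fixed curve itself satisfies $i(q^{Bl}_-,\pl_-(Q_{0,\infty}))>0$. Its one-line justification is that distinct values of $B$ give distinct rational vertices.
\item You keep $B$ and find a nearby index $-n'l+k$, with $0\le k\le l$, whose pushed-forward curve $q^{Bl+k}_-$ has positive intersection with the limit. You then transfer the lower bound back to $\phi^-_{-n'l}(n)$ using $1<\phi^-_i/\phi^-_{i-1}<2$ from Proposition~\ref{prop:Gue_phi}(3), losing at most a factor $2^{l}$.
\end{itemize}

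The paper's route is slightly simpler and needs no Gu\'eritaud input beyond the definitions. Since increasing $B$ only shrinks the scope, re-tuning it is harmless.

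Your route has two advantages. It does not revisit the constant $B$ after the scope has been fixed. It also handles a point the paper passes over: $q^{-n'l}_-$ is an axis vertex only if $e_{-n'l-1}$ also lies on the common segment, and the same factor-$2$ comparison absorbs this.

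You also make explicit two facts the paper's justification silently relies on. First, $\pl_-(Q_{0,\infty})$ is nonzero, because its Keen--Series length coordinate is positive. Second, a nonzero measured lamination on $S_{1,1}$ has zero intersection with at most one simple closed curve.
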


\begin{proof}
Recall from Definition~\ref{def: scope_of_inflexibility} that
\[
    n'=n-B.
\]

After increasing $B$ if necessary, we may assume that
\[
    i(q^{Bl}_-,\pl_-(Q_{0,\infty}))>0,
    \qquad
    i(q^{-Bl}_+,\pl_+(Q_{-\infty,0}))>0.
\]
This does not affect the asymptotic estimates and is possible since, as
$B$ varies, the vertices $q^{Bl}_-$ and $q^{-Bl}_+$ each run through
distinct rational vertices. We henceforth fix such a $B$; in particular,
$q^{Bl}_-$ and $q^{-Bl}_+$ are independent of $n$.

Within the scope of combinatorial inflexibility, the sequence of Farey
edges agrees with the periodic sequence determined by the axis
$[\Lambda_-,\Lambda_+]$ of $\psi$. Hence the corresponding vertices satisfy
\[
    q^{j+l}_-=\psi(q^j_-),
    \qquad
    q^{j-l}_+=\psi^{-1}(q^j_+)
\]
whenever the indices involved lie in the scope. Since $n'=n-B$, for all
sufficiently large $n$, iterating these identities $n$ times gives
\[
    \psi^n(q^{-n'l}_-)=q^{Bl}_-,
    \qquad
    \psi^{-n}(q^{n'l}_+)=q^{-Bl}_+.
\]

We first consider $\phi_{-n'l}^{-}(n)$. By the definition of
$\phi_{-n'l}^-(n)$, the invariance of the intersection number, and the
identities above, we have
\begin{align*}
    \phi_{-n'l}^-(n)
    &=
    i(q^{-n'l}_-,\pl_-(Q_{-n,n}))\\
    &=
    i\bigl(\psi^n(q^{-n'l}_-),
           \psi^n(\pl_-(Q_{-n,n}))\bigr)\\
    &=
    i(q^{Bl}_-,\pl_-(Q_{0,2n})).
\end{align*}
As $n\to\infty$, the conformal structure $\psi^{2n}X$ converges to
$[\Lambda_+]$ in the Thurston compactification. Hence, as in the proof of
Proposition~\ref{prop: growth_in_intersectionnum},
\[
    \pl_-(Q_{0,2n})
    \longrightarrow
    \pl_-(Q_{0,\infty})
    \qquad
    \text{as } n\to\infty.
\]
It follows that
\[
    \phi_{-n'l}^-(n)
    \longrightarrow
    i(q^{Bl}_-,\pl_-(Q_{0,\infty})).
\]

By the same argument at $\phi_{n'l}^+(n)$, applying $\psi^{-n}$ instead
of $\psi^n$, we obtain
\[
    \phi_{n'l}^+(n)
    =
    i(q^{-Bl}_+,\pl_+(Q_{-2n,0}))
    \longrightarrow
    i(q^{-Bl}_+,\pl_+(Q_{-\infty,0})).
\]

Therefore, we have
\[
    C_{\min}
    \leq
    \phi_{-n'l}^-(n)
    \leq
    C_{\max},
    \qquad
    C_{\min}
    \leq
    \phi_{n'l}^+(n)
    \leq
    C_{\max}
\]
for all sufficiently large $n$.
\end{proof}

\begin{coro} \label{coro:exponential_growth_of_intersection_numbers}
    For any $\varepsilon > 0$, there exist an integer $N \in \mathbb N$ and a positive constant $C > 0$, independent of $n$, such that for all $n \ge N$ and all indices $i$ with $|i| \le n'l$, the following holds:
    \begin{equation}
        \min \{ \phi_i^+(n), \phi_i^-(n) \} \ge C \left( \dil(\psi)^{\frac{1}{l} - \varepsilon} \right)^{n'l - |i|}.
    \end{equation}
\end{coro}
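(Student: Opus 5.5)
We prove the bound for $\phi_i^-(n)$; the bound for $\phi_i^+(n)$ is symmetric, with the roles of $\pm n'l$ exchanged. The two ingredients are the anchor at the endpoint of the scope, given by Lemma~\ref{lem:boundary_intersection_bound}, and an exponential growth rate across each fundamental domain inside the scope.

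For the growth, we use the periodicity within the scope. For $-n'l \le j$ and $j+l \le n'l$ we have $q^{j+l}_- = \psi(q^j_-)$. By invariance of the intersection number,
\[
    \phi^-_{j+l}(n) = i\bigl(\psi(q^j_-), \pl_{-,n}\bigr) = i\bigl(q^j_-, \psi^{-1}\pl_{-,n}\bigr).
\]
Since $[\pl_{-,n}]$ lies in a small neighborhood $U_-$ of $[\Lambda_-]$, Lemma~\ref{lem:intersectionnum-control} should give $\phi^-_{j+l}(n)/\phi^-_j(n) \ge \dil(\psi)^{1-\varepsilon l}$.

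The delicate point is uniformity in the curve. Lemma~\ref{lem:intersectionnum-control} is stated for a fixed $\gamma$, whereas here $q^j_-$ varies with $j$. Only $l$ curves $q^j_-$ with $0\le j<l$ are needed, together with their $\psi$-translates. So I would instead fix $\gamma=q^{j_0}_-$ in a single fundamental domain and transport it by powers of $\psi$: writing $q^{j_0+kl}_- = \psi^k \gamma$, we get
\[
    \phi^-_{j_0+kl}(n) = i\bigl(\gamma, \psi^{-k}\pl_{-,n}\bigr).
\]
The classes $[\psi^{-k}\pl_{-,n}]$ stay near $[\Lambda_-]$, because $\psi^{-1}$ moves points of $U_-$ toward $[\Lambda_-]$, after shrinking $U_-$ and taking $n$ large via Lemma~\ref{lem:pl-lam-control}. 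Hence each step in $k$ multiplies the intersection number by a factor in $\bigl(\dil(\psi)e^{-\varepsilon}, \dil(\psi)e^{\varepsilon}\bigr)$. Doing this for each of the $l$ residues $j_0$ gives
\[
    \phi^-_{j+kl}(n) \ge \bigl(\dil(\psi) e^{-\varepsilon}\bigr)^k \phi^-_j(n).
\]

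To handle indices that are not multiples of $l$ from the anchor, we use the monotonicity and the ratio bounds $1<\phi^-_i/\phi^-_{i-1}<2$ from Proposition~\ref{prop:Gue_phi}(1),(3). Within one period $\phi^-$ changes by at most a factor $2^l$, and this is absorbed into $C$. Combining with $\phi^-_{-n'l}(n)\ge C_{\min}$ gives
\[
    \phi^-_i(n) \ge C\, \bigl(\dil(\psi)^{1/l-\varepsilon'}\bigr)^{i+n'l}.
\]
Since $i+n'l \ge n'l-|i|$ whenever $i\ge 0$, this covers half the range. For $i<0$ we have $i+n'l = n'l-|i|$ exactly, which covers the other half. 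The symmetric argument for $\phi^+_i(n)$ anchored at $n'l$ then yields the stated bound on the minimum.

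The main obstacle is making the $\varepsilon$-control uniform over all $k$ up to about $2n'$. For this I would rely on the nesting $\psi^{-1}(U_-)\subset U_-$, obtained by choosing $U_-$ as a $\psi^{-1}$-forward-invariant neighborhood, for instance a Farey interval bounded by axis vertices, so that every $\psi^{-k}\pl_{-,n}$ remains in the region where Lemma~\ref{lem:intersectionnum-control} applies.
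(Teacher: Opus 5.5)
Your skeleton is the paper's: anchor at $-n'l$ via Lemma~\ref{lem:boundary_intersection_bound}, per-period growth via Lemma~\ref{lem:intersectionnum-control}, and monotonicity in between. You are also right that uniformity in the curve is the delicate point. The paper passes over it by applying Lemma~\ref{lem:intersectionnum-control} ``to each residue class modulo $l$'', although $q^j_-$ is not determined by $j \bmod l$.

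Your resolution, however, iterates in the wrong direction. For the lower bound on $\phi^-_i$ you need the indices $-n'l\le i\le 0$.

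If $\gamma=q^r_-$ with $0\le r<l$ is a genuinely fixed curve, those indices correspond to $k\le 0$. There $\psi^{-k}\pl_{-,n}=\psi^{|k|}\pl_{-,n}$ is pushed toward $[\Lambda_+]$. The invariance $\psi^{-1}(U_-)\subset U_-$ only controls $k\ge 0$, where monotonicity already suffices.

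If instead you base $\gamma$ at the left end so that $k$ runs over $[0,2n']$, then $\gamma=\psi^{-n'}(q^r_-)$ depends on $n$. The neighborhood supplied by Lemma~\ref{lem:intersectionnum-control} then shrinks with $n$.

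In fact no choice of $U_-$ rescues a uniform per-period bound. Since $\psi^{n'}\pl_{-,n}=\pl_-(Q_{-B,2n-B})\to\pl_-(Q_{-B,\infty})$,
\[
    \frac{\phi^-_{-n'l+l}(n)}{\phi^-_{-n'l}(n)}
    \longrightarrow
    \frac{i\bigl(\psi(q^{Bl}_-),\pl_-(Q_{0,\infty})\bigr)}{i\bigl(q^{Bl}_-,\pl_-(Q_{0,\infty})\bigr)}.
\]
This limit is determined by $B$, which is fixed before $\varepsilon$, and has no reason to be at least $\dil(\psi)e^{-\varepsilon}$ for small $\varepsilon$.

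The repair is to let the lamination, not the curve, carry the dependence on the period, and to give up control on boundedly many periods. For $0\le m<n'$ and $0\le r<l$,
\[
    \frac{\phi^-_{-n'l+(m+1)l+r}(n)}{\phi^-_{-n'l+ml+r}(n)}
    =\frac{i(q^r_-,\mu_{m+1})}{i(q^r_-,\psi\mu_{m+1})},
    \qquad
    \mu_m:=\psi^{n'-m}\pl_{-,n}=\pl_-(Q_{-B-m,\,2n-B-m}).
\]

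Take $U_-$ to be the intersection of the neighborhoods from Lemma~\ref{lem:intersectionnum-control} for the $l$ fixed curves $q^0_-,\dots,q^{l-1}_-$. Choose $N_1$, depending only on $\varepsilon$, so that Lemma~\ref{lem:pl-lam-control} applies with $\psi^{\pm k}X\in V_\pm$ for all $k\ge N_1$.

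Then $[\mu_{m+1}]\in U_-$ whenever $B+m+1\ge N_1$; the other end satisfies $2n-B-m-1\ge n$ automatically. For those $m$ the ratio exceeds $\dil(\psi)e^{-\varepsilon}$.

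Equivalently, since $\mu_{m+1}=\psi^{-1}\mu_m$, your $\psi^{-1}$-invariance argument works if it is started at $\mu_{m_0}$ with $m_0=\max(0,N_1-B)$. It cannot be started at $\pl_{-,n}=\mu_{n'}$.

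On the at most $\max(0,N_1-B)$ initial periods, use only $\phi^-_{j+l}>\phi^-_j$ from Proposition~\ref{prop:Gue_phi}(1). Absorb the lost factor $\bigl(\dil(\psi)e^{-\varepsilon}\bigr)^{\max(0,N_1-B)}$ into $C$. With this change, the rest of your argument and the symmetric one for $\phi^+$ go through.
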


\begin{proof}
    Fix $\varepsilon > 0$. 
    Choose $\delta>0$ sufficiently small so that
    \[
        (\dil(\psi)-\delta)^{1/l}
        \ge
        \dil(\psi)^{1/l-\varepsilon}.
    \]

    For $\phi^{-}_j$'s, using the invariance of the intersection number, we have
    \[
        \frac{\phi_{j+l}^-(n)}{\phi_j^-(n)}
        =
        \frac{i(q_{j+l}^-,\pl_{-,n})}{i(q_j^-,\pl_{-,n})}
        =
        \frac{i(\psi(q_j^-),\pl_{-,n})}{i(q_j^-,\pl_{-,n})}
        =
        \frac{i(q_j^-,\psi^{-1}\pl_{-,n})}{i(q_j^-,\pl_{-,n})}.
    \]
    Since $[\pl_{-,n}]$ converges to $[\Lambda_-]$ in $\pml$ as
    $n\to\infty$, we may apply Lemma~\ref{lem:intersectionnum-control} to each residue class modulo $l$.
    There are only finitely many such residue classes, so the estimate can be
    made uniform over all of them. Hence, after increasing $N$ if necessary,
    for all $n\ge N$ and all admissible indices $j$ with $j,j+l$ in the scope,
    we have
    \[
        \frac{\phi_{j+l}^-(n)}{\phi_j^-(n)}
        \ge
        \dil(\psi)-\delta.
    \]

    Similarly, after increasing $N$ if necessary, the same argument gives
    \[
        \frac{\phi_{j-l}^+(n)}{\phi_j^+(n)}
        \ge
        \dil(\psi)-\delta
    \]
    for all $n\ge N$ and all admissible indices $j$ with $j,j-l$ in the scope.

    We now estimate $\phi_i^-(n)$. By Proposition \ref{prop:Gue_phi}~(1), the
    sequence $(\phi_i^-(n))_i$ is strictly increasing with respect to $i$.
    By Lemma \ref{lem:boundary_intersection_bound}, there exists
    $C_{\min}>0$, independent of $n$, such that
    \[
        \phi_{-n'l}^-(n)\ge C_{\min}
    \]
    for all sufficiently large $n$.

    Let $i$ be an index with $-n'l\le i\le n'l$, and set
    \[
        k:=\left\lfloor \frac{i+n'l}{l}\right\rfloor.
    \]
    Then
    \[
        -n'l+kl\le i<-n'l+(k+1)l.
    \]
    Since $\phi_i^-(n)$ is increasing in $i$, we have
    \[
        \phi_i^-(n)
        \ge
        \phi_{-n'l+kl}^-(n).
    \]
    Iterating the period-$l$ growth estimate from the left endpoint gives
    \[
        \phi_{-n'l+kl}^-(n)
        \ge
        \phi_{-n'l}^-(n)(\dil(\psi)-\delta)^k.
    \]
    Therefore,
    \[
        \phi_i^-(n)
        \ge
        C_{\min}(\dil(\psi)-\delta)^k.
    \]
    Since
    \[
        k\ge \frac{i+n'l}{l}-1,
    \]
    we obtain
    \[
        \phi_i^-(n)
        \ge
        C_{\min}(\dil(\psi)-\delta)^{-1}
        \left((\dil(\psi)-\delta)^{1/l}\right)^{n'l+i}.
    \]
    By the choice of $\delta$, after absorbing the fixed factor
    $(\dil(\psi)-\delta)^{-1}$ into the constant, there exists a constant
    $C_->0$, independent of $n$, such that
    \[
        \phi_i^-(n)
        \ge
        C_-
        \left(\dil(\psi)^{1/l-\varepsilon}\right)^{n'l+i}.
    \]

        Similarly, there exists \(C_+>0\), independent of \(n\), such that
    \[
        \phi_i^+(n)
        \ge
        C_+
        \left(\dil(\psi)^{1/l-\varepsilon}\right)^{n'l-i}.
    \]

    Combining the two estimates, we have
    \[
        \phi_i^-(n)
        \ge
        C_-
        \left(\dil(\psi)^{1/l-\varepsilon}\right)^{n'l+i},
    \]
    and
    \[
        \phi_i^+(n)
        \ge
        C_+
        \left(\dil(\psi)^{1/l-\varepsilon}\right)^{n'l-i}.
    \]
    Since $|i|\le n'l$, we have
    \[
        n'l+i\ge n'l-|i|,
        \qquad
        n'l-i\ge n'l-|i|.
    \]
    Setting
    \[
        C:=\min\{C_-,C_+\},
    \]
    we obtain
    \[
        \min\{\phi_i^+(n),\phi_i^-(n)\}
        \ge
        C
        \left(\dil(\psi)^{1/l-\varepsilon}\right)^{n'l-|i|}.
    \]
\end{proof}

\begin{lem} \label{lem:gradient_exponential_growth}
    Under the same assumptions as Corollary \ref{coro:exponential_growth_of_intersection_numbers}, 
    there exists a uniform constant $C' > 0$ such that, for all sufficiently large $n$ and all indices $i$ with $|i| \le n'l$, we have
    \[
        \min\{|\nabla \phi_i^-(n)|,|\nabla \phi_i^+(n)|\}
        \ge 
        C' \left( \dil(\psi)^{\frac{1}{l} - \varepsilon} \right)^{n'l - |i|}.
    \]
\end{lem}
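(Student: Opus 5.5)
We reduce the statement to Corollary~\ref{coro:exponential_growth_of_intersection_numbers} by showing that $|\nabla\phi_i^\pm(n)|$ is comparable to $\phi_i^\pm(n)$ up to a universal multiplicative constant. Only the purely combinatorial facts in Proposition~\ref{prop:Gue_phi}~(1) and~(3) are needed; they hold for every quasi-Fuchsian group and every index.

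First consider $\phi^-$. By Proposition~\ref{prop:Gue_phi}~(1), the sequence $(\phi^-_i)$ is strictly increasing in $i$, so $\nabla\phi^-_i=\phi^-_{i-1}-\phi^-_i<0$ and $|\nabla\phi^-_i|=\phi^-_i-\phi^-_{i-1}$. By Proposition~\ref{prop:Gue_phi}~(3) we have $\phi^-_{i-1}<\phi^-_i/2\cdot 2$, and we use this bound to control the ratio $\phi^-_{i-1}/\phi^-_i$. The difficulty is that the ratio bound $1<\phi^-_i/\phi^-_{i-1}<2$ only gives $\phi^-_i-\phi^-_{i-1}>0$, with no positive lower bound proportional to $\phi^-_i$. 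So we need a sharper relation, and we extract it from the integrality structure. Write $\phi^-_i=i(q^i_-,\pl_{-})$. The Farey edge $e_i$ has endpoints $a,b$ and the triangles on its two sides have third vertices $q^i_-$ and $q^i_+$. Because intersection with a fixed lamination is linear on the Farey structure, $\phi$ at the far vertex equals the sum or difference of $\phi(a)$ and $\phi(b)$. Consequently each successive difference $\phi^-_i-\phi^-_{i-1}$ equals $\phi^-$ evaluated at some vertex of the triangle adjacent to $e_i$, namely the vertex at which $e_{i-1}$ and $e_i$ meet. More precisely, $\phi^-_i-\phi^-_{i-1}$ is the value of $i(\cdot,\pl_-)$ at a Farey vertex lying on the boundary of the region crossed by the geodesic at step $i$. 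Since $e_i\cap e_{i-1}$ is a common vertex $v$ and $\phi$ is additive, $\phi^-_i=\phi^-_{i-1}+i(v,\pl_-)$ in the appropriate sign configuration. We check this against the recursion in Proposition~\ref{prop:Gue_phi}~(2).

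Granting this identity, $|\nabla\phi^-_i(n)|=i(v_i,\pl_{-,n})$, where $v_i$ is a vertex of a triangle adjacent to $e_i$. Inside the scope, this vertex is itself one of the $q$-vertices $q^{j}_\pm$ with $|j-i|$ bounded by a constant depending only on the combinatorics of $\psi$. Indeed, it is one of the two vertices adjacent to the axis at index $i$ or $i-1$. Therefore $|\nabla\phi^-_i|\ge\min(\phi^-_{i-1},\ldots)$, or more robustly $|\nabla\phi^-_i|\ge \phi^-_{i-k}$ for some bounded $k\le l$. Applying the ratio bound~(3) $k$ times then gives $|\nabla\phi^-_i|\ge 2^{-l}\phi^-_i$. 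The case of $\phi^+$ is symmetric.

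Combining this with Corollary~\ref{coro:exponential_growth_of_intersection_numbers} gives the claim, with $C'=2^{-l}C$ after also adjusting for the shift of index by at most $l$. The shift changes $n'l-|i|$ by at most $l$, which only alters the constant by a factor $\dil(\psi)^{\pm 1}$. The main obstacle is the identification step: we must verify that $\phi^\pm_i-\phi^\pm_{i-1}$ is exactly the intersection number of $\pl_\pm$ with a nearby Farey vertex, and that this vertex is controlled by an index at bounded distance within the periodic region. The first thing to try is a case split into hinge and non-hinge indices using Proposition~\ref{prop:Gue_phi}~(2). In the non-hinge case the difference is constant across the run, and equals $\phi$ at the pivot vertex, which is the last hinge vertex. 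In the hinge case the difference equals $\phi_{i\pm1}$ directly.
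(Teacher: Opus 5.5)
Your route differs from the paper's. It is sound away from the two ends of the scope, but as written it does not cover the indices near $\pm n'l$.

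\textbf{Comparison with the paper.} The paper never compares $|\nabla\phi_i^\pm|$ with $\phi_i^\pm$. It reruns the proof of Corollary~\ref{coro:exponential_growth_of_intersection_numbers} with the pivot vertices $p_i=e_{i-1}\cap e_i$ in place of $q_i^\pm$. The ingredients are:
\begin{enumerate}
\item the identity $|\nabla\phi_i^\pm(n)|=i(p_i,\pl_{\pm,n})$;
\item the periodicity $p_{i+l}=\psi(p_i)$;
\item the per-period dilatation estimate of Lemma~\ref{lem:intersectionnum-control};
\item a separate base case $|\nabla\phi^-_{-n'l}(n)|=i(p_{Bl},\pl_-(Q_{0,2n}))\to i(p_{Bl},\pl_-(Q_{0,\infty}))>0$.
\end{enumerate}
You instead prove a combinatorial comparison $|\nabla\phi_i^\pm|\ge c_l\,\phi_i^\pm$ and then quote the Corollary. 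This avoids a second dynamical and convergence argument and gives an explicit $C'$.

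\textbf{The identification you call the main obstacle is easy.} It follows from Proposition~\ref{prop:Gue_phi}(1),(2) alone, with no Farey geometry. Write $d_i:=\phi^-_i-\phi^-_{i-1}=|\nabla\phi^-_i|$. A non-hinge at $i$ gives $d_{i+1}=d_i$. A hinge at $i$ gives $d_{i+1}=\phi^-_{i-1}$. Hence $|\nabla\phi^-_i|=\phi^-_{h-1}$, where $h$ is the last hinge with $h\le i-1$. Symmetrically, $|\nabla\phi^+_i|=\phi^+_{h'+1}$, where $h'$ is the first hinge with $h'\ge i$. The LR word of $\psi$ contains both letters, so any $l$ consecutive indices of the periodic pattern contain a hinge. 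Thus $i-(h-1)\le l+1$, and (3) gives $|\nabla\phi^-_i|>2^{-(l+1)}\phi^-_i$.

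Some details of your sketch need correcting:
\begin{enumerate}
\item The pivot $p_i$ is an endpoint of $e_i$, not one of $q^i_\pm$ or $q^{i-1}_\pm$.
\item At a hinge $i-1$ one gets $|\nabla\phi^-_i|=\phi^-_{i-2}$, not $\phi_{i\pm1}$.
\item The constant is $2^{-(l+1)}$, not $2^{-l}$.
\item You should use (3) rather than applying the Corollary at the shifted index $h-1$, which may lie outside $|j|\le n'l$.
\end{enumerate}

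\textbf{The gap.} Your bounded-shift claim needs the hinge pattern of $Q_n$ to be the periodic one on the $l+1$ indices preceding $i$ (or following $i$, for $\phi^+$). For $i$ within about $l$ of $-n'l$, the last hinge before $i$ may sit at an index $<-n'l$. There the Farey path of $Q_n$ need not follow the axis of $\psi$. A long non-hinge run entering the scope from outside makes $|\nabla\phi^-_i|/\phi^-_i$ of order $1/(\text{run length})$. Nothing in your argument bounds that run length uniformly in $n$, so the assertion that the shift is bounded by a constant depending only on $\psi$ ``inside the scope'' is unjustified there. The same issue arises for $\phi^+$ near $+n'l$. Since the right-hand side is bounded on these $O(l)$ indices, what you need there is a uniform positive lower bound for $|\nabla\phi^\pm_i|$.

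\textbf{How to fix it.} The simplest fix is to increase $B$ in Definition~\ref{def: scope_of_inflexibility} by a bounded amount (two suffices). This is permitted exactly as in Lemma~\ref{lem:boundary_intersection_bound}. The path of $Q_n$ then follows the axis for at least $l+1$ further edges beyond each end of the scope. Every hinge you use lies in the periodic region, and your comparison holds for all $|i|\le n'l$. Alternatively, treat the end indices by the paper's convergence argument.
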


\begin{proof}
For each index $i$, let $p_i$ denote the common endpoint of the two
consecutive Farey edges $e_{i-1}$ and $e_i$. By the computation in the proof
of \cite[Lemma 3.2]{Gue09},
\[
    \nabla\phi_i^+(n)=i(p_i,\pl_{+,n}),
    \qquad
    -\nabla\phi_i^-(n)=i(p_i,\pl_{-,n}).
\]
In particular,
\[
    |\nabla\phi_i^\pm(n)|=i(p_i,\pl_{\pm,n}).
\]

We prove the estimate for $\phi_i^-(n)$. Within the scope of combinatorial
inflexibility, the vertices $p_i$ satisfy
\[
    p_{i+l}=\psi(p_i).
\]
Recall that $n'=n-B$. At $\phi_{-n'l}^-(n)$, invariance of the
intersection number gives
\[
    |\nabla\phi_{-n'l}^-(n)|
    =
    i(p_{Bl},\pl_-(Q_{0,2n})).
\]
Since
\[
    \pl_-(Q_{0,2n})\longrightarrow\pl_-(Q_{0,\infty}),
\]
we obtain
\[
    |\nabla\phi_{-n'l}^-(n)|
    \longrightarrow
    i(p_{Bl},\pl_-(Q_{0,\infty}))>0.
\]
The positivity follows from the same Farey-tree argument as in
Lemma~\ref{lem:boundary_intersection_bound}. Hence
$|\nabla\phi_{-n'l}^-(n)|$ is uniformly bounded away from zero.

We may now repeat the period-$l$ iteration in the proof of
Corollary~\ref{coro:exponential_growth_of_intersection_numbers}, replacing
$q_i^-$ by $p_i$. After absorbing the finitely many residue classes modulo
$l$ into the constant, we obtain a constant $C'_->0$ such that
\[
    |\nabla\phi_i^-(n)|
    \geq
    C'_-
    \left(\dil(\psi)^{\frac1l-\varepsilon}\right)^{n'l+i}
\]
for all $|i|\leq n'l$ and all sufficiently large $n$.

The estimate for $\phi_i^+(n)$ is obtained in the same way, starting from
the right endpoint $i=n'l$ and applying $\psi^{-n}$. Thus there exists
$C'_+>0$ such that
\[
    |\nabla\phi_i^+(n)|
    \geq
    C'_+
    \left(\dil(\psi)^{\frac1l-\varepsilon}\right)^{n'l-i}.
\]
Since
\[
    n'l+i\geq n'l-|i|,
    \qquad
    n'l-i\geq n'l-|i|,
\]
the conclusion follows by setting
\[
    C':=\min\{C_-',C_+'\}.
\]
\end{proof}

\begin{rmk} \label{rem:sign_of_nabla}
The inequalities in Proposition \ref{prop:Gue_phi},
\[
    1<\frac{\phi_i^-}{\phi_{i-1}^-}<2,
    \qquad
    1<\frac{\phi_{i-1}^+}{\phi_i^+}<2
\]
imply
\[
    \nabla\phi_i^-<0,
    \qquad
    \nabla\phi_i^+>0.
\]
Hence, whenever
\[
    \min\{|\nabla\phi_i^-|,|\nabla\phi_i^+|\}\ge \pi,
\]
we have
\[
    \nabla\phi_i^- \le -\pi,
    \qquad
    \nabla\phi_i^+ \ge \pi.
\]
\end{rmk}

\section{Bounding the Convex Core Volume}

We are now ready to state our main theorem. 

\begin{thm}[Asymptotics of convex core volumes for $S_{1,1}$]\label{thm:Kojima-McShane}
    Let $S = S_{1,1}$ and let $\psi \colon S \to S$ be a pseudo-Anosov map. Then, 
        \[
            | \Vol(\core(Q_n)) - 2n \Vol(M_\psi) | = O(1) \quad (\text{as\ } n \to \infty ).
        \]
\end{thm}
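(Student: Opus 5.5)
The plan is to obtain matching lower and upper bounds, each comparing Gu\'eritaud's variational problem for $Q_n$ (Theorem~\ref{thm:Gue_geomreal_qf}) with the one for $M_\psi$ (Theorem~\ref{thm:Gue_geomreal_map_torus}) on the scope of combinatorial inflexibility $|i|\le n'l$. Throughout I use that $2n'\Vol(M_\psi)=2n\Vol(M_\psi)-2B\Vol(M_\psi)$, so replacing $n$ by $n'$ only costs $O(1)$. I also use that the volume functional is a sum of Lobachevsky terms, one per tetrahedron. Each term is nonnegative and continuous on the closure of the admissible sets, so the maximum over $W(Q_n)$ equals the supremum over its closure.

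\emph{Lower bound.} Let $(w^*_i)$ be the periodic (period $l$) critical point realizing $M_\psi$, rescaled by the factor $2$ between \cite{Gue06} and \cite{Gue09}. By Theorem~\ref{thm:combinatorial_inflexibility}, on $|i|\le n'l$ the hinge/non-hinge pattern of $Q_n$ coincides with that of $\psi$. Hence $w^*$ satisfies the hinge and convexity conditions there. It also satisfies $0<w^*_i<\pi$ and $|\nabla w^*_i|<\pi$.

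By Corollary~\ref{coro:exponential_growth_of_intersection_numbers}, Lemma~\ref{lem:gradient_exponential_growth} and Remark~\ref{rem:sign_of_nabla}, there is a constant $K$ independent of $n$ with the following property. For $|i|\le n'l-K$ we have $\phi^\pm_i\ge\pi$, $\nabla\phi^-_i\le-\pi$ and $\nabla\phi^+_i\ge\pi$. So the $\phi$-constraints of $W(Q_n)$ are automatically satisfied by $w^*$ on this range.

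Outside this range I must extend $w^*$ to an element of $\overline{W(Q_n)}$. I would first try a cutoff of the form
\[
w_i:=\min\{w^*_i,\ \alpha\phi^-_i,\ \alpha\phi^+_i\}
\]
for a small fixed $\alpha\in(0,1)$, adjusted near the junction. The motivation is that the sequences $\phi^\pm$ satisfy the hinge and non-hinge relations with equality (Proposition~\ref{prop:Gue_phi}(2)). Taking a minimum preserves the concavity-type non-hinge inequality. The bounds $w_i<\phi^\pm_i$ and the gradient bounds then follow from $\alpha<1$ together with the monotonicity in Proposition~\ref{prop:Gue_phi}(1),(3).

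Since all tetrahedron volumes are $\ge 0$, evaluating the functional gives
\[
\Vol(\core(Q_n))\ \ge\ \sum_{|i|\le n'l-K}\Vol(\Delta_i(w^*))\ \ge\ 2n'\Vol(M_\psi)-O(1).
\]

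\emph{Upper bound.} Let $(w_i)$ be the maximizer for $Q_n$. By Proposition~\ref{prop:Gue_phi}(5) and Lemma~\ref{lem:boundary_intersection_bound}, the tetrahedra with $|i|>n'l$ contribute at most $2C\,C_{\max}=O(1)$.

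I cut the range $|i|\le n'l$ into $2n'$ blocks of length $l$ and form the averaged sequence
\[
\bar w_j:=\frac{1}{2n'}\sum_k w_{j+kl},
\]
read cyclically modulo $l$. The defining constraints of $W(\psi)$ are convex: they are linear inequalities, and $|w_{i+1}-w_{i-1}|<w_i$ is an intersection of two half-spaces. The combinatorial type is also periodic on the scope. Hence every constraint of $W(\psi)$ holds for $\bar w$, except those straddling the wrap-around index. For those, the mismatch telescopes, and its total size is at most $O(1/n')$, because the $w_i$ are bounded by $\pi$.

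The volume functional is concave, so the inside volume satisfies
\[
\sum_{|i|\le n'l}\Vol(\Delta_i)\ \le\ 2n'\,\mathcal V(\bar w)+O(1),
\]
where $\mathcal V$ denotes the volume functional over one period. It then remains to show that $\mathcal V\le\Vol(M_\psi)+O(\eta)$ on the $\eta$-relaxation of $W(\psi)$. This follows from uniform continuity of $\mathcal V$ on a compact set, together with the fact that the critical point in Theorem~\ref{thm:Gue_geomreal_map_torus} is the maximum over the closure of $W(\psi)$. Choosing $\eta=O(1/n')$ gives $2n'\mathcal V(\bar w)\le 2n'\Vol(M_\psi)+O(1)$, which is what is needed.

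\emph{Main obstacle.} I expect the hardest step to be the tail construction in the lower bound. The cutoff must make the hinge condition $|w_{i+1}-w_{i-1}|<w_i$ hold across the transition from $w^*$ to multiples of $\phi^\pm$, and taking a minimum need not preserve it. If the simple minimum fails, I would interpolate over a fixed number of periods by a convex combination
\[
w_i=t_i\,w^*_i+(1-t_i)\,\alpha\phi^\mp_i,
\]
with slowly varying weights $t_i\in[0,1]$. Both summands satisfy the hinge and non-hinge inequalities, and a slowly varying $t_i$ should preserve them up to an error controlled by the strict slack of $w^*$. A secondary point in the upper bound is that the $\eta$-relaxation estimate must be uniform in $\eta$; this uses the fact that $\Vol(M_\psi)$ is the maximum of $\mathcal V$ over the closure of $W(\psi)$, not merely a critical value.
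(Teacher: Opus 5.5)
Your plan follows the paper's overall strategy: plant the periodic weights on the central block for the lower bound, and for the upper bound control the tails by Proposition~\ref{prop:Gue_phi}(5) and Lemma~\ref{lem:boundary_intersection_bound} and compare the middle with $M_\psi$. However, each half has a step that does not go through as written.

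\emph{Lower bound.} The tails are where all the work lies, and your fallback does not close the gap. The sequences $\alpha\phi^\pm_i$ satisfy the hinge and non-hinge relations with \emph{equality} (Proposition~\ref{prop:Gue_phi}(2)), so they have zero slack.

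Write $w_i=\alpha\phi^\pm_i+t_i d_i$ with $d_i=w^*_i-\alpha\phi^\pm_i$. Each defining linear inequality $L$ then gives $L(w)=t_iL(w^*)+(1-t_i)L(\alpha\phi^\pm)+E_i$, where $E_i$ collects the terms $(t_{i\pm1}-t_i)d_{i\pm1}$. The middle term vanishes, so $E_i$ can only be absorbed by $t_i$ times the slack of $w^*$. This lets $t_i$ decay at most geometrically.

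Yet $t_i$ must become exactly $0$ before $w^*$ stops being admissible for $Q_n$, i.e., before it leaves the region where the hinge pattern and the $\phi$-constraints are respected. At the step where $t$ drops to $0$, the term $t_{i-1}d_{i-1}$ is first order, and there is no slack on either side to absorb it. The plain minimum fails at the junction for the same reason.

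What is missing is a tail sequence with \emph{strict} slack that is \emph{nearly constant} where it meets $w^{\mathrm{per}}$. The paper uses $u^\pm_i=f_\eta(\phi^\pm_i)$ with $f_\eta(t)=\eta(1-e^{-t})$:
\begin{itemize}
\item Strict concavity gives the non-hinge inequality strictly.
\item The identity $f_\eta(a+b)-f_\eta(b)=e^{-b}f_\eta(a)<f_\eta(a)$ gives the hinge inequality strictly.
\item The bounds $f_\eta(t)<\eta t$ and $0<\nabla u^+_i<\eta\nabla\phi^+_i$ give the $\phi$-constraints once $\eta<1$ (Lemma~\ref{lem:matsuda-eta}).
\end{itemize}
Since $u^\pm_i\approx\eta$ deep inside, gluing reduces to matching one value and checking three local inequalities (Lemmas~\ref{lem:concatenate_admissible_sequences} and~\ref{lem:non-hinge_to_hinge}). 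Afterwards $\eta$ is pushed below $1$ with Lemma~\ref{lem:reduce_eta}.

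\emph{Upper bound.} Averaging over blocks is a genuinely different route from the paper, which simply asserts that the restriction of the maximizer to each complete period block lies in $W(\psi)$. You rightly notice that this restriction is not literally admissible: the cyclic wrap-around constraints fail, and the end tetrahedra of a block depend on weights in the neighbouring blocks.

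However, your key estimate $\mathcal V\le\Vol(M_\psi)+O(\eta)$ on the $\eta$-relaxation does not follow from uniform continuity. Continuity only gives $\Vol(M_\psi)+\omega(\eta)$. Since $\Lambda'(x)=-\log|2\sin x|$ is unbounded as $x\to 0$, the modulus of continuity near degenerate tetrahedra is only $O(\eta\log(1/\eta))$. With $\eta\sim 1/n'$ and $2n'$ blocks, this yields an $O(\log n)$ error, not $O(1)$.

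The same objection applies to your step $\sum_{|i|\le n'l}\Vol(\Delta_i)\le 2n'\mathcal V(\bar w)+O(1)$. There you replace the non-cyclic averaged configuration by the cyclic $\bar w$, whose wrap-around angles may even be slightly negative, outside the domain where $\mathcal V$ is concave.

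The fix is to use concavity through the tangent hyperplane at the interior critical point instead of continuity:
\begin{itemize}
\item Let $G$ be the volume of one block, as a function of its $l$ weights and its two neighbours. Then $G$ is concave and smooth at the periodic point $\hat w^*$, and $G(\hat w^*)=\Vol(M_\psi)$.
\item The tangent inequality gives $G(u)\le G(\hat w^*)+\nabla G(\hat w^*)\cdot(u-\hat w^*)$.
\item The gradient of the cyclic functional vanishes at $w^*$. Hence, summing this inequality over the $2n'$ blocks of the $Q_n$-maximizer, the linear terms telescope to two boundary terms of size at most $\pi|\nabla G(\hat w^*)|$.
\end{itemize}
This gives $2n'\Vol(M_\psi)+O(1)$ directly, with no averaging needed. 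It also repairs the wrap-around issue in the paper's blockwise comparison.
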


Before proving Theorem \ref{thm:Kojima-McShane}, 
we state a corollary that follows directly from the commensurability between $S_{1,1}$ and $S_{0,4}$.

\begin{coro}[Asymptotics for $S_{0,4}$]
    Let $S=S_{0,4}$ be a four-punctured sphere, and let $\psi \in \mcg(S_{0,4})$ be a pseudo-Anosov mapping class. 
    For any $X \in \T(S_{0,4})$, let $Q_n = Q(\psi^{-n}X, \psi^n X)$ be the sequence of quasi-Fuchsian manifolds. 
    Then, there exists a constant $C > 0$ independent of $n$ such that
        \[
             | \Vol(\core(Q_n)) - 2n \Vol(M_\psi) | = O(1) \quad (\text{as\ } n \to \infty ).
        \]
\end{coro}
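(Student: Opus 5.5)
The statement to prove is the corollary for $S_{0,4}$. I will deduce it from Theorem~\ref{thm:Kojima-McShane} through the classical commensurability between $S_{1,1}$ and $S_{0,4}$.

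\emph{Step 1: the covering.} The elliptic involution $\iota$ of the once-punctured torus acts as $-I$ on homology. It commutes with every mapping class, since $\mcg(S_{1,1})\cong\mathrm{SL}(2,\mathbb Z)$ modulo its center. Filling the puncture, the quotient $T^2/\iota$ is a sphere with four cone points of order $2$: the three non-puncture fixed points together with the image of the puncture. Puncturing all four of these points gives $S_{0,4}$. Equivalently, there is a degree-$4$ orbifold correspondence: the four-punctured torus $T^2\setminus T^2[2]$ double covers $S_{0,4}$ and is itself a four-fold cover of $S_{1,1}$ through multiplication by $2$. I will work with the cleaner route through the orbifold $S_{1,1}/\iota$, because Teichm\"uller space and mapping class groups behave well under this quotient. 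One has $\T(S_{1,1})\cong\T(S_{0,4})$ equivariantly, and $\mcg(S_{0,4})$ is commensurable with $\mcg(S_{1,1})$, as both act on $\mathbb H$ through $\mathrm{PSL}(2,\mathbb Z)$ up to finite index and finite kernel.

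\emph{Step 2: transfer of $\psi$ and $X$.} Take a pseudo-Anosov $\psi\in\mcg(S_{0,4})$. Some power $\psi^k$ lies in the finite-index subgroup that lifts to a pseudo-Anosov $\tilde\psi$ of a fixed finite cover $\hat S$. That cover is either $S_{1,1}$ itself, via the identification above, or a finite cover of $S_{1,1}$ on which a power of $\tilde\psi$ descends. I lift $X$ to $\tilde X$. Quasi-Fuchsian manifolds lift compatibly: $Q(\psi^{-n}X,\psi^nX)$ is covered with degree $d$ by $Q(\tilde\psi^{-n/k}\tilde X,\tilde\psi^{n/k}\tilde X)$ when $k\mid n$, because the cover of conformal boundaries determines the covering group. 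Convex cores lift to convex cores, so $\Vol(\core)$ multiplies by $d$. The same holds for mapping tori: $\Vol(M_{\tilde\psi})=d\,k\Vol(M_\psi)$ after the appropriate power.

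\emph{Step 3: applying the main theorem.} Applying Theorem~\ref{thm:Kojima-McShane} to $\tilde\psi$ on $S_{1,1}$ gives the estimate along the subsequence $n\in k\mathbb Z$. Dividing by $d$ gives $|\Vol(\core(Q_n))-2n\Vol(M_\psi)|=O(1)$ on each residue class $n=km+r$. For a nonzero residue $r$, I replace $X$ by $\psi^{r}X$ on one side. I then handle the one-sided shift $Q(\psi^{-n}X,\psi^{n}X)$ versus $Q(\psi^{-n-r}X,\psi^{n-r}X)$ by equivariance; that is, I apply the theorem with base point $\psi^{-r}X$ and shifted index.

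\emph{Main obstacle.} The main difficulty is the residue bookkeeping. Symmetric windows $(-n,n)$ are not preserved under reindexing by $k$. The cleanest way around this is to note that the proof of Theorem~\ref{thm:Kojima-McShane} works for $Q_{-n,m}$ with $n,m\to\infty$, giving $\Vol(\core(Q_{-n,m}))=(n+m)\Vol(M_\psi)+O(1)$. Each residue then only changes the constant. A further point to check is that lifting $\psi^k$ to a cover of $S_{1,1}$ other than $S_{1,1}$ itself would leave the scope of the main theorem. I must therefore choose the correspondence so that the lift lives on $S_{1,1}$, namely through the $\iota$-quotient, where every element of $\mcg(S_{0,4})$ fixing the cone point that is the image of the puncture lifts. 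That subgroup has finite index, which justifies passing to $\psi^k$.
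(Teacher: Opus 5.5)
Your overall strategy, commensurability plus multiplicativity of volume under finite covers, is the paper's. But the covering you end up committing to does not exist, so Steps~2--3 fail as written.

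\textbf{The claimed cover.} No quasi-Fuchsian once-punctured torus manifold covers a quasi-Fuchsian four-punctured sphere manifold. A degree-$d$ cover of $S_{0,4}\times\mathbb R$ is $\hat S\times\mathbb R$ with $\chi(\hat S)=-2d$, whereas $\chi(S_{1,1})=-1$. Relatedly, $S_{1,1}/\iota$ is not $S_{0,4}$. It is the orbifold $O=S^2(2,2,2,\infty)$, with three cone points of angle $\pi$ and one cusp. Deleting the cone points is a purely topological operation and does not turn a hyperbolic structure on $O$ into one on $S_{0,4}$.

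The recipe ``lift to $T^2\setminus T^2[2]$ and fill in three punctures'' does give a homomorphism from a puncture stabilizer in $\mcg(S_{0,4})$ to $\mcg(S_{1,1})$. However, it is not induced by any covering of hyperbolic manifolds, so ``convex cores lift to convex cores, volume multiplies by $d$'' cannot be applied to it.

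What pushed you there was the worry that a genuine cover ``would leave the scope of the main theorem.'' That worry is unfounded. In the paper's route, $T^2\setminus T^2[2]$ double covers $S_{0,4}$ and four-fold covers $S_{1,1}$. A lift of $\psi$ (or of a power of $\psi$) to this common cover descends to a pseudo-Anosov $\psi_1$ of $S_{1,1}$. Theorem~\ref{thm:Kojima-McShane} is then applied to $\psi_1$ on $S_{1,1}$, never on the common cover. You do still have to check one thing: the structure on the common cover lifted from $X\in\T(S_{0,4})$ must be invariant under the deck group of the cover to $S_{1,1}$. This holds because the Klein four-group in $\mcg(S_{0,4})$ fixes $\T(S_{0,4})$ pointwise.

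\textbf{The residue bookkeeping.} Your fix here does not close either. If you pass to $\psi^k$, then for $n=km+r$ the two ends are $(\psi^k)^{-m}(\psi^{-r}X)$ and $(\psi^k)^{m}(\psi^{r}X)$. Unless $k\mid 2r$, these are not of the form $\psi_1^{-a}Y,\ \psi_1^{b}Y$ for a single base point $Y$. So an asymmetric-window statement for $Q_{-a,b}$ is not enough; you would need a two-base-point version for $Q(\psi_1^{-m}A,\psi_1^{m}B)$, which the paper does not state.

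The clean way out is to avoid powers altogether by using $O$ as a common \emph{quotient}:
\begin{itemize}
\item $\pi_1(S_{1,1})$ and $\pi_1(S_{0,4})$ are torsion-free subgroups of index $2$ and $4$ in $\pi_1^{\mathrm{orb}}(O)$.
\item Every element of $\mcg(S_{0,4})$ descends to $O$. The Klein four-group $V$ is the kernel of the action on $\T(S_{0,4})$, hence normal, and $\mcg(S_{0,4})/V\cong\mathrm{PSL}(2,\mathbb Z)\cong\mcg(O)$.
\item Every mapping class of $O$ lifts to $S_{1,1}$, by Birman--Hilden.
\item Since $V$ fixes every point of $\T(S_{0,4})$, uniqueness in Bers' theorem extends the group of $Q(\psi^{-n}X,\psi^nX)$ to a quasi-Fuchsian orbifold group containing it with index $4$.
\item The index-$2$ torus subgroup of that orbifold group is the group of $Q(\psi_1^{-n}X_1,\psi_1^{n}X_1)$, under the equivariant identification $\T(S_{0,4})\cong\T(O)\cong\T(S_{1,1})$.
\item The same holds for mapping tori: realize $\psi$ by its Teichm\"uller map, which normalizes $V$. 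Then $M_\psi$ and $M_{\psi_1}$ cover the orbifold mapping torus with degrees $4$ and $2$.
\end{itemize}
Hence, for every $n$,
\[
\Vol(\core(Q_n))=2\Vol\bigl(\core(Q(\psi_1^{-n}X_1,\psi_1^{n}X_1))\bigr)
\quad\text{and}\quad
\Vol(M_\psi)=2\Vol(M_{\psi_1}).
\]
The corollary then follows from Theorem~\ref{thm:Kojima-McShane} with no residue classes at all.
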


\begin{proof}
    It is a well-known fact that the once-punctured torus $S_{1,1}$ 
    and the four-punctured sphere $S_{0,4}$ are commensurable.
    They share a four-punctured torus as a common finite cover. 
    This topological commensurability induces a natural correspondence 
    between their Teichm\"uller spaces and mapping class groups. 
    By passing to this common finite cover, 
    the asymptotic volume estimate for $S_{0,4}$ can be deduced directly from the $S_{1,1}$ case, 
    as the hyperbolic volume is multiplicative under finite coverings. 
    For more detailed discussions on this commensurability, we refer the reader to Komori--Sugawa \cite{KomSug04}.
\end{proof}

We turn to the proof of Theorem~\ref{thm:Kojima-McShane}, estimating
$\Vol(\core(Q_n))$ from below and above.

\subsection{Bounding the convex core volume from below}

\begin{prop} \label{prop:from_below}
	There exists a constant $C_0 > 0$, independent of $n$, 
	such that for all sufficiently large $n$, 
	the following holds:
	\[
		\Vol(\core(Q_n)) \ge 2n \Vol(M_{\psi}) - C_0.
	\]
\end{prop}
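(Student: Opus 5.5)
The plan is to use the variational characterization in Theorem~\ref{thm:Gue_geomreal_qf}. The realized angle sequence of $\core(Q_n)$ maximizes the volume functional $\mathcal V$ (the sum over $i$ of the Lobachevsky volumes of $\Delta_i$ with dihedral angles $\nabla w_i, -w_{i-1}, w_i$) over $W(Q_n)$. So it suffices to exhibit one admissible point $(w_i)\in W(Q_n)$ whose volume is at least $2n\Vol(M_\psi)-C_0$. Let $(w^\psi_i)_{i\in\mathbb Z}$ be the $l$-periodic critical point from Theorem~\ref{thm:Gue_geomreal_map_torus}; one period of it has volume exactly $\Vol(M_\psi)$. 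By Theorem~\ref{thm:combinatorial_inflexibility} and Definition~\ref{def: scope_of_inflexibility}, on the indices $|i|\le n'l$ the hinge/non-hinge pattern of $Q_n$ agrees with that of $M_\psi$. Hence $(w^\psi_i)$ satisfies the hinge and non-hinge inequalities of $W(Q_n)$ there.

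The candidate will be a damped version of $w^\psi$. Set $w_i=w^\psi_i$ for $|i|\le n'l-K$, where $K$ is a constant independent of $n$. On the tails, outside this window, choose $w_i$ to be any point of $W(Q_n)$ restricted to those indices, for instance a small multiple of Gu\'eritaud's own admissible configuration, so that it satisfies the constraints. The remaining constraints of $W(Q_n)$ for $|i|\le n'l-K$ are $w_i<\min\{\phi_i^\pm,\pi\}$ and $\nabla\phi_i^-<\nabla w_i<\nabla\phi_i^+$. They hold automatically once $\phi_i^\pm>\pi$ and $\nabla\phi^-_i\le -\pi$, $\nabla\phi^+_i\ge \pi$, because $0<w^\psi_i<\pi$ and $|\nabla w^\psi_i|<\pi$. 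By Corollary~\ref{coro:exponential_growth_of_intersection_numbers}, Lemma~\ref{lem:gradient_exponential_growth} and Remark~\ref{rem:sign_of_nabla}, these bounds hold as soon as $C(\dil(\psi)^{1/l-\varepsilon})^{n'l-|i|}\ge\pi$. That is guaranteed by $n'l-|i|\ge K$ for a fixed $K$ depending only on $\varepsilon$, $\psi$, $X$. The middle block therefore lies in the admissible region, and it consists of $2n'-2K/l=2n-O(1)$ full periods. It contributes $(2n-O(1))\Vol(M_\psi)$, since volumes of ideal tetrahedra are nonnegative and each is bounded by the regular volume.

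\textbf{Main obstacle:} gluing the periodic block to the tail configurations so that the coupled constraints at the junction indices $\pm(n'l-K)$ still hold. These are the hinge/non-hinge inequalities involving $w_{i\pm1}$, and $\nabla\phi^-_i<\nabla w_i<\nabla\phi^+_i$ across the cut. I would first try to avoid an explicit gluing. Since $W(Q_n)$ is convex and the constraints are strict, take a convex combination $(1-t)w^{\psi}+t\,w^{\mathrm{adm}}$ localized by a cutoff, or simply interpolate linearly over a bounded transition window of length $O(l)$. There the slack from $\phi^\pm_i\gg\pi$ (exponential growth again) absorbs the error. The tetrahedra in the transition window and the tails have nonnegative volume, so discarding them only lowers the estimate. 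If linear interpolation breaks the hinge inequality $|w_{i+1}-w_{i-1}|<w_i$, I would instead scale the periodic block by a factor $s\in(0,1)$ near the ends, using that the hinge and non-hinge cones are invariant under positive scaling. I would then taper $s\to 0$ geometrically, so that it matches the decaying bounds $\phi_i^\pm$ on the tails. That costs only a bounded number of periods.

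Finally, $\Vol(\core(Q_n))=\max_{W(Q_n)}\mathcal V\ge\mathcal V(w)\ge(2n'-2K/l-O(1))\Vol(M_\psi)$. Since $n'=n-B$ with $B,K$ independent of $n$, this gives $\Vol(\core(Q_n))\ge 2n\Vol(M_\psi)-C_0$.
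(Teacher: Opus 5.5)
\paragraph{Where the proposal matches the paper and where it stops.} Your global strategy is the paper's. You bound the maximum of the volume functional over $W(Q_n)$ from below by a test point that equals the periodic weights $w^{\mathrm{per}}$ of $M_\psi$ on $|i|\le n'l-K$. There, Corollary~\ref{coro:exponential_growth_of_intersection_numbers} and Lemma~\ref{lem:gradient_exponential_growth} make $w_i<\min\{\phi^\pm_i,\pi\}$ and $\nabla\phi^-_i<\nabla w_i<\nabla\phi^+_i$ automatic. Using a multiple of the geometric solution as the tail is also legitimate. The gap is the junction. You correctly flag it as the main obstacle, but the mechanisms you offer do not work as stated.

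\paragraph{Why the proposed gluing mechanisms fail.} The constraints coupling the central block to the tail are the hinge and non-hinge inequalities $|w_{i+1}-w_{i-1}|<w_i$ and $w_{i+1}+w_{i-1}<2w_i$. These are homogeneous cone conditions that do not involve $\phi^\pm_i$ at all, so the exponential size of $\phi^\pm_i$ gives no slack for them. Convexity of $W(Q_n)$ does not help either, because $w^{\mathrm{per}}$ is not a point of $W(Q_n)$. An index-dependent cutoff $w_i=(1-t_i)w^{\mathrm{per}}_i+t_iw^{\mathrm{adm}}_i$ adds to each three-term inequality an error of order $|t_{i+1}-t_i|\,|w^{\mathrm{per}}-w^{\mathrm{adm}}|$. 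Near the end of the transition window, where $t_i\approx 1$, this error must be beaten by the cone slack of $w^{\mathrm{adm}}$ alone. So you need a lower bound on that slack near index $n'l-K$, uniform in $n$.
\begin{itemize}
\item For $t\,w^{\mathrm{core}}$, the slack is $t$ times that of the unknown geometric solution there. Nothing in your argument controls it; doing so would require convergence of the triangulations of $Q_n$ near the ends of the scope.
\item For explicit tails like the paper's, the non-hinge slack is of order at most $e^{-\phi^+_{i+1}}$.
\end{itemize}
Your fallback, a geometric taper of a rescaled periodic block, does preserve the cone inequalities for a ratio close to $1$. However, it is only defined where the hinge pattern is periodic, i.e.\ inside the scope. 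So it moves the junction rather than removing it.

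\paragraph{The missing ingredient.} You need a tail that is admissible for an \emph{arbitrary} hinge pattern and whose local shape near the junction is explicitly known. The paper takes $u^\pm_i=f_\eta(\phi^\pm_i)$ with $f_\eta(t)=\eta(1-e^{-t})$ (Lemma~\ref{lem:matsuda-eta}).
\begin{itemize}
\item Strict concavity and $f_\eta(a+b)-f_\eta(b)<f_\eta(a)$ turn Gu\'eritaud's identities $\phi_{i-1}+\phi_{i+1}=2\phi_i$ and $\phi_{i-1}=\phi_i+\phi_{i+1}$ (Proposition~\ref{prop:Gue_phi}(2)) into the strict non-hinge and hinge inequalities.
\item The bounds $0<u^+_i<\eta\phi^+_i$ and $0<\nabla u^+_i<\eta\nabla\phi^+_i$ give the remaining constraints.
\end{itemize}
Where $\phi^\pm_i$ is huge, this tail is nearly the constant $\eta$, which makes the matching explicit:
\begin{itemize}
\item Cut at the first hinge $i_0$ after a non-hinge run, and choose $\eta$ with $f_\eta(\phi^+_{i_0-1})=w^{\mathrm{per}}_{i_0-1}$.
\item The hinge inequality at $i_0$ then follows from near-constancy.
\item The inequalities at $i_0-1$ and $i_0+1$ are settled by a case split on whether $f_\eta(\phi^+_{i_0})\le w^{\mathrm{per}}_{i_0}$ (Lemmas~\ref{lem:concatenate_admissible_sequences} and~\ref{lem:non-hinge_to_hinge}).
\item The all-hinge pattern, where $w^{\mathrm{per}}$ is constant, is treated separately.
\item Lemma~\ref{lem:reduce_eta} finally brings $\eta$ below $1$.
\end{itemize}
Without some such construction, your argument proves the lower bound only modulo the existence of the glued admissible point. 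That existence is the real content of the proposition.
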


\begin{proof}
    Let $\Delta$ be the canonical ideal triangulation of $M_{\psi}$ obtained by
    Gu\'eritaud (see Subsection~\ref{subsec.gueritaud}).
    The weights corresponding to the complete hyperbolic structure of $M_{\psi}$
    form a periodic sequence
    \[
        \mathbf w^{\mathrm{per}}
        =
        (w_i^{\mathrm{per}})_{i\in\mathbb Z}
    \]
    with period $l$, where $l$ is the number of tetrahedra in one fundamental
    domain.

    Set
    \[
        W_{\max}:=\max_{0\le i<l} w_i^{\mathrm{per}}.
    \]
    Since $\mathbf w^{\mathrm{per}}$ is periodic and
    $w_i^{\mathrm{per}}\in(0,\pi)$ for all $i$, we have
    $0<W_{\max}<\pi$.

    Recall from Definition~\ref{def: scope_of_inflexibility} 
    the half-length $n'$ of the scope of combinatorial inflexibility. 
    Within the scope $|i|\le n'l$, the edges along the axis of $\psi$ are indexed by
    \[
        i\in\{-n'l,-n'l+1,\dots,n'l\}.
    \]

    We need to show that the periodic weights $\mathbf w^{\mathrm{per}}$ satisfy
    the constraint conditions for $Q_n$ over a sufficiently large central region.

    Let
    \[
        K:=\dil(\psi)^{1/l-\varepsilon}>1
    \]
    for a sufficiently small $\varepsilon>0$.
    By Corollary~\ref{coro:exponential_growth_of_intersection_numbers}
    and Lemma~\ref{lem:gradient_exponential_growth}, there exist constants
    $C,C'>0$, independent of $n$, such that for all sufficiently large $n$ and
    all $|i|\le n'l$, we have
    \[
        \min\{\phi_i^+(n),\phi_i^-(n)\}
        \ge
        C K^{n'l-|i|}
    \]
    and
    \[
        \min\{|\nabla\phi_i^+(n)|,|\nabla\phi_i^-(n)|\}
        \ge
        C' K^{n'l-|i|}.
    \]

    Choose a sufficiently large integer $k$, independent of $n$, such that
    \[
        CK^k>W_{\max}
        \qquad\text{and}\qquad
        C'K^k\ge \pi.
    \]
    Then, for all sufficiently large $n$ and all indices satisfying
    $|i|\le n'l-k$, we have
    \[
        \min\{\phi_i^+(n),\phi_i^-(n)\}>W_{\max}
    \]
    and
    \[
        \min\{|\nabla\phi_i^+(n)|,|\nabla\phi_i^-(n)|\}\ge \pi.
    \]

    At this point, one has to construct an admissible sequence
    which agrees with the periodic weight sequence on a large central region
    and is cut off near the two ends while preserving all hinge and non-hinge
    inequalities. Since the construction is technical and independent of the main
    geometric argument, we postpone it to Appendix~\ref{appendix.margin}.

    Applying the construction in Appendix~\ref{appendix.margin}, we obtain an
    admissible weight sequence $\mathbf w^{(n)}$ for $Q_n$ such that
    $\mathbf w^{(n)}$ agrees with $\mathbf w^{\mathrm{per}}$ on a central block
    of length $2nl-O(1)$.

    Therefore, using the volume maximization principle associated with the canonical
    triangulation and comparing the central block with the periodic
    triangulation of the mapping torus, we obtain
    \[
        \Vol(\core(Q_n))
        \ge
        2n\Vol(M_\psi)-C_0
    \]
    for some constant $C_0>0$ independent of $n$.
\end{proof}

\subsection{Bounding the convex core volume from above}
\begin{prop} \label{prop:from_above}
    There exists a constant $C_1 > 0$, independent of $n$, such that for all sufficiently large $n$, the following holds:
    \[
        \Vol(\core(Q_n)) \le 2n \Vol(M_{\psi}) + C_1.
    \]
\end{prop}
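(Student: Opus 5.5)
The plan is to reverse the roles of $Q_n$ and $M_\psi$, as announced in the strategy. Let $\mathbf w^{(n)}=(w_i)$ be the volume-maximizing weight sequence for $Q_n$ from Theorem~\ref{thm:Gue_geomreal_qf}, and write $\Delta_i$ for the corresponding tetrahedra.

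\textbf{Step 1: split the triangulation.} We divide the tetrahedra of $\core(Q_n)$ into three groups:
\begin{itemize}
\item the tails $i<-n'l$ and $i>n'l$;
\item the central scope $|i|\le n'l$ from Definition~\ref{def: scope_of_inflexibility}.
\end{itemize}

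\textbf{Step 2: the tails.} By Proposition~\ref{prop:Gue_phi}~(5),
\[
\sum_{j\le -n'l}\Vol(\Delta_j)\le C\phi^-_{-n'l}(n)\le C\,C_{\max},
\]
where the last inequality uses Lemma~\ref{lem:boundary_intersection_bound}. The same bound holds on the right. So the tails contribute $O(1)$. The tetrahedra with $n'l-k<|i|\le n'l$ number only $2k$, and each ideal tetrahedron has volume at most $v_3$, so these also contribute $O(1)$. Since $n'=n-B$, it remains to bound the central part by $2n'\Vol(M_\psi)+O(1)$.

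\textbf{Step 3: block decomposition.} We cut the range $|i|\le n'l$ into $2n'$ consecutive blocks of length $l$. Combinatorial inflexibility (Theorem~\ref{thm:combinatorial_inflexibility}) says each block has exactly the hinge/non-hinge pattern of one period of $M_\psi$. The restriction of $\mathbf w^{(n)}$ to a block satisfies the constraints $0<w_i<\pi$ and the hinge/non-hinge inequalities of $W(\psi)$ at every interior index.

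The difficulty is at the block boundary, where periodicity requires $w_{-1}$ and $w_l$ to be replaced by the block's own $w_{l-1}$ and $w_0$. We intend to use Gu\'eritaud's concavity: the volume functional on $W(\psi)$ is strictly concave, and its maximum equals $\Vol(M_\psi)$ (the critical point is the maximum). Take the functional $V_l$ summing the volumes of $l$ consecutive tetrahedra, viewed as a function of the $l+2$ weights $w_{-1},\dots,w_l$ subject to the local constraints. We aim to show
\[
V_l(w_{-1},\dots,w_l)\le \Vol(M_\psi)+\langle \text{boundary correction}\rangle,
\]
where the correction is a telescoping term such as $g(w_{l-1},w_l)-g(w_{-1},w_0)$ for a bounded function $g$.

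\textbf{Step 4: averaging to get exact periodicity.} The cleanest route to Step 3 is to average the $2n'$ block restrictions into a single sequence $\bar w$ of length $l$, set $\bar w_j=\frac{1}{2n'}\sum_m w_{j+ml}$, and extend it periodically. Averaging does not preserve every constraint exactly, since the periodic constraint at index $0$ uses $\bar w_{-1}=\bar w_{l-1}$. However, the averaged boundary neighbours equal the true averages of the neighbours up to $O(\pi/n')$, coming from the first and last block. This is enough because all the constraints are linear and the admissible set is convex. By concavity (Jensen), the sum of the block volumes is at most $2n'$ times the volume of $\bar w$. The volume of $\bar w$ is at most $\Vol(M_\psi)$ provided $\bar w\in \overline{W(\psi)}$, with an error of $O(1/n')$ per block coming from the boundary mismatch. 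Over $2n'$ blocks this error totals $O(1)$.

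\textbf{Main obstacle.} The hard part is making this last estimate rigorous. The volume functional, a sum of Lobachevsky functions, is concave but not Lipschitz near the boundary of the domain. So an $O(1/n')$ perturbation of $\bar w$ needed to enter $\overline{W(\psi)}$ does not obviously cost only $O(1/n')$ in volume. We would first try to avoid perturbation altogether by exploiting linearity. Summing the constraints over all blocks with cyclic indices, the discrepancies telescope to terms involving only $w_{-n'l-1}$ and $w_{n'l+1}$. Alternatively, we would first apply a cutoff as in Appendix~\ref{appendix.margin} to modify $\mathbf w^{(n)}$ near the ends so that it is exactly periodic-compatible there, at $O(1)$ volume cost. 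Then $\bar w$ lies exactly in $\overline{W(\psi)}$, and the upper semicontinuous extension of the volume to the closure, which is still maximized at the interior critical point, gives the bound.
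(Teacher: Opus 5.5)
\textbf{There is a genuine gap: the block estimate is stated as a goal but never proved.} Your diagnosis in Step~3 is correct, and the paper's own proof passes over this point. The paper simply asserts that each block restriction $(w_j)_{rl\le j<(r+1)l}$ lies in $W(\psi)$ and therefore contributes at most $\Vol(M_\psi)$. But the angles of $\Delta_j$ are affine in $(w_{j-1},w_j,w_{j+1})$. So the volume of block $r$ is $G(x^{(r)})$ for a function $G$ of the $l+2$ weights $x^{(r)}=(x_{-1},\dots,x_l)=(w_{rl-1},\dots,w_{rl+l})$. This $G$ agrees with the periodic functional $\mathcal V$ of $W(\psi)$ only when $x_{-1}=x_{l-1}$ and $x_l=x_0$.

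Your proposal does not close this gap either. Step~4 does not yield $O(1)$ as set up. Jensen gives $\sum_r G(x^{(r)})\le 2n'\,G(\bar x)$, but pushing $\bar x$ by $O(1/n')$ onto the periodic locus is controlled only by the $O(\delta\log(1/\delta))$ modulus of continuity of the Lobachevsky function. That costs up to $O(\log n'/n')$ per block, so this route only gives an error of $O(\log n)$; this is exactly the obstacle you name. The cutoff alternative is not carried out. It is also not evident that one can force $w_{-n'l-1}=w_{n'l-1}$ and $w_{n'l}=w_{-n'l}$ while keeping all angles of the extreme block tetrahedra nonnegative.

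\textbf{The missing idea is first-order concavity at the maximizer, not at $\bar x$.} By combinatorial inflexibility, $G$ is the same function for every block in the scope. It is concave on the convex set of $x\in\mathbb R^{l+2}$ giving nonnegative angles. The point $x^*=(w^{\mathrm{per}}_{l-1},w^{\mathrm{per}}_0,\dots,w^{\mathrm{per}}_{l-1},w^{\mathrm{per}}_0)$ is interior, so $G(x)\le G(x^*)+\nabla G(x^*)\cdot(x-x^*)$ on the whole domain. Set $a:=\partial_{-1}G(x^*)$ and $b:=\partial_lG(x^*)$. Since $w^{\mathrm{per}}$ is a critical point of $\mathcal V(w)=G(w_{l-1},w_0,\dots,w_{l-1},w_0)$, the chain rule gives $\partial_jG(x^*)=0$ for $1\le j\le l-2$, together with $\partial_{l-1}G(x^*)=-a$ and $\partial_0G(x^*)=-b$. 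Hence
\[
G(x)\le \Vol(M_\psi)+a\,(x_{-1}-x_{l-1})+b\,(x_l-x_0).
\]
This is exactly your telescoping correction with $g$ linear. Since $x^{(r)}_{l-1}=x^{(r+1)}_{-1}$ and $x^{(r)}_{l}=x^{(r+1)}_{0}$, summing over consecutive blocks leaves a correction of at most $(|a|+|b|)\pi$, using $0<w_i<\pi$; here $a,b$ depend only on $\psi$. Only differentiability at the interior point $x^*$ is used. So no averaging or perturbation into $\overline{W(\psi)}$ is needed, and the non-Lipschitz behaviour near the boundary of the domain never enters.

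With this inequality in place, your Steps~1--3 prove the proposition. The same inequality also supplies the justification missing from the paper's block estimate.
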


\begin{proof}
    In Definition \ref{def: scope_of_inflexibility}, 
    we set $n' := n-B$ to be the half-length of the scope of combinatorial inflexibility.
    We decompose the total volume of the convex core into the central stable part and the two boundary tails:
    \begin{align*}
        \Vol(\core(Q_n)) &= \sum_{-n'l \le j < n'l} \Vol(\Delta_j(n)) + 
        \sum_{j < -n'l} \Vol(\Delta_j(n))  + \sum_{j \ge n'l} \Vol(\Delta_j(n)).
    \end{align*}
    Here, $\Delta_j(n)$ denotes the $j$-th ideal tetrahedron in Gu\'eritaud's 
    triangulation associated with the quasi-Fuchsian manifold $Q_n$; see Subsection~\ref{subsec.gueritaud}.
    
    For the boundary tails, Gu\'eritaud's volume bound (Proposition~\ref{prop:Gue_phi}(5)) implies that there exists a universal constant $C > 0$ such that
    \[
        \sum_{j \leq -n'l} \Vol(\Delta_j(n)) \le C \phi^-_{-n'l}(n) \quad \text{and} \quad \sum_{j \ge n'l} \Vol(\Delta_j(n)) \le C \phi^+_{n'l}(n).
    \]
    By Lemma \ref{lem:boundary_intersection_bound}, the intersection numbers $\phi^-_{-n'l}(n)$ and $\phi^+_{n'l}(n)$ are uniformly bounded by a constant $C_{\max}$ independent of $n$. 
    Thus, the total volume of the two boundary tails is bounded by $2C C_{\max}$.

    We next consider the volume of the central part. 
    Let
    \[
        \mathbf w^{\mathrm{core}}=(w_i)
    \]
    be the weight sequence realizing the complete hyperbolic structure on
    $\core(Q_n)$.

    Choose $k$ as in the preceding admissibility argument, and consider only the
    complete period blocks contained in the safe central region
    \[
        |j|\le n'l-k.
    \]

    Moreover, by Theorem \ref{thm:combinatorial_inflexibility}, the hinge and non-hinge pattern
    on the scope of combinatorial inflexibility agrees with the periodic Farey
    path associated with $\psi$. Therefore, on each complete period block in the
    safe central region, the defining angle inequalities are exactly those of the
    periodic parameter space $W(\psi)$. Since the geometric weights satisfy
    $0<w_i<\pi$, the restriction of $\mathbf w^{\mathrm{core}}$ to each such block
    defines an admissible point of $W(\psi)$:
    \[
        (w_j)_{rl\le j<(r+1)l}\in W(\psi).
    \]
    Since the complete hyperbolic structure on $M_\psi$ maximizes the volume
    functional over $W(\psi)$, each such complete period block contributes at most
    $\Vol(M_\psi)$:
    \[
        \sum_{rl\le j<(r+1)l}\Vol(\Delta_j(n))
        \le
        \Vol(M_\psi).
    \]
    There are at most $2n'$ such blocks. The remaining tetrahedra in the central
    region lie within the two end regions
    \[
        n'l-k<|j|<n'l,
    \]
    together with at most one incomplete period block at each end. Hence the
    number of tetrahedra is bounded above by a constant depending only on $k$ and $l$. Since the
    volume of any ideal tetrahedron is at most $v_3$, their total volume is bounded
    above by a constant $C_{\mathrm{cen}}$, depending only on $k$ and $l$, and in
    particular independent of $n$. Hence
    \[
        \sum_{-n'l\le j<n'l}\Vol(\Delta_j(n))
        \le
        2n'\Vol(M_\psi)+C_{\mathrm{cen}}.
    \]

    Combining this with the boundary-tail estimate and using $n'\le n$, we get
    \[
        \Vol(\core(Q_n))
        \le
        2n\Vol(M_\psi)+C_{\mathrm{cen}}+2CC_{\max}.
    \]
    Setting
    \[
        C_1:=C_{\mathrm{cen}}+2CC_{\max},
    \]
    we obtain
    \[
        \Vol(\core(Q_n))
        \le
        2n\Vol(M_\psi)+C_1.
    \]
    Here $C_{\mathrm{cen}}$ depends only on the fixed constants $k$ and $l$, while
    $C$ and $C_{\max}$ are independent of $n$. Hence $C_1$ is independent of $n$.
\end{proof}

Theorem~\ref{thm:Kojima-McShane} follows immediately from Propositions~\ref{prop:from_below} and~\ref{prop:from_above}.

\section{Geometric Application: Convergence of Ideal Triangulations}

We have studied the asymptotic behavior of the convex core volume for
once-punctured torus groups using the combinatorial properties of the Farey
tessellation and the volume maximization principle due to Gu\'eritaud
\cite{Gue06,Gue09}. The same combinatorial description also gives a geometric
application: the ideal triangulations of the convex cores converge, in a local
sense, to the lifted periodic triangulation of the infinite cyclic cover of the
mapping torus.

We first recall Gu\'eritaud's straightening theorem and fix notation. For a
quasi-Fuchsian once-punctured torus group, the pleating laminations on the two
boundary components of the convex core determine a finite path in the Farey
tree, equivalently a finite sequence of left and right turns. This sequence
defines a combinatorial ideal triangulation, which we denote by
$\mathcal D^{\mathrm{Comb}}$. Let $\mathcal D^{\mathrm{Geom}}$ denote the
corresponding geometric ideal triangulation of the interior of the convex core.

\begin{thm}[{\cite[Theorem 0.1]{Gue09}}] \label{thm.Gu}
    If $V$ is the interior of the convex core of a quasi-Fuchsian punctured
    torus group, then the ideal triangulation $\mathcal D^{\mathrm{Geom}}$ of
    $V$ is the totally geodesic straightening of
    $\mathcal D^{\mathrm{Comb}}$.
\end{thm}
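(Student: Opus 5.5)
The plan follows Gu\'eritaud's variational approach: realize $\mathcal D^{\mathrm{Comb}}$ geometrically by angle structures, and identify the resulting hyperbolic manifold with the interior $V$ of the convex core. The one identification we cannot avoid is between the pleating data of the glued manifold and that of the given group. First, from $[\pl_\pm(Q)]$ we read off the Farey path $\{e_i\}$ and the hinge/non-hinge pattern, and build the abstract layered triangulation $\{\Delta_i\}$. We then compute the intersection numbers $\phi_i^\pm$ from the pleating measures as in \eqref{def: phi_seq}.

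Next, we parametrize positive angle structures on $\{\Delta_i\}$ by sequences $(w_i)$ assigning the angles $\nabla w_i$, $-w_{i-1}$, $w_i$. The conditions defining $W(Q)$ in Theorem~\ref{thm:Gue_geomreal_qf} have the following roles:
\begin{itemize}
\item the hinge and non-hinge inequalities make all angles positive;
\item the bounds involving $\phi_i^\pm$ and $\nabla\phi_i^\pm$ encode that the exterior dihedral angles along the two boundary pleated surfaces equal the prescribed pleating measures.
\end{itemize}
Proposition~\ref{prop:Gue_phi}(1)--(4) gives that $W(Q)$ is a nonempty bounded convex polytope. The volume functional, a sum of Lobachevsky functions, is strictly concave on it, so it has a unique maximizer on the closure $\overline{W(Q)}$.

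The main step, and the one I expect to be hardest, is to show that this maximizer lies in the interior of $W(Q)$. I would argue by contradiction, proceeding in this order:
\begin{itemize}
\item Suppose the maximizer lies on a face of $\overline{W(Q)}$. Then some tetrahedron is flattened, with an angle equal to $0$ or $\pi$.
\item Push along a direction tangent to $W(Q)$ that reopens that tetrahedron. The derivative of the Lobachevsky function $\Lambda(\theta)$ is $-\log|2\sin\theta|$, which tends to $+\infty$ as $\theta\to0$.
\item This infinite gain must dominate the finite losses in the neighbouring tetrahedra. Checking this needs a case analysis over the hinge and non-hinge configurations, together with the inequalities of Proposition~\ref{prop:Gue_phi}(2),(3).
\item Only tetrahedra that degenerate in a controlled way, with the three angles $(0,0,\pi)$, need separate treatment; I would handle them by checking that adjacent angle sums force a contradiction with the $\phi$-constraints.
\end{itemize}

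Once the maximizer is interior, the Casson--Rivin principle applies. The Lagrange conditions at the critical point are exactly Thurston's edge (holonomy) gluing equations for the interior edges, so the tetrahedra glue to a hyperbolic structure on the interior. Since the edges crossed by the Farey path only meet the puncture in a standard way, the same equations give completeness at the cusp. Along the two outer layers the faces assemble into pleated surfaces whose bending measures are $\pl_\pm$ by construction. This produces a hyperbolic manifold with boundary whose developing map extends to a convex quasi-Fuchsian structure. Its convex core has pleating invariants equal to those of $Q$: the projective classes are fixed by the combinatorics, and the lengths are recovered from the glued metric.

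By the Keen--Series Theorem~\ref{thm:ks}, the resulting marked group is $Q$ itself. The glued manifold therefore coincides with $\core(Q)$, and its interior with $V$. Finally, all tetrahedra are positively oriented and glued consistently, so the developing map is a local isometry that is injective on each tetrahedron. The geometric triangulation is then a genuine, non-degenerate ideal triangulation isotopic to $\mathcal D^{\mathrm{Comb}}$, i.e.\ its totally geodesic straightening, and it coincides with $\mathcal D^{\mathrm{Geom}}$.
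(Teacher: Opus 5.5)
The paper does not prove this statement; it quotes Gu\'eritaud's Theorem~0.1 from \cite{Gue09}. Your outline (angle structures, volume maximization, Casson--Rivin, identification with $Q$) has the right general shape. The steps you treat as routine are where the content lies, and several fail as written.

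The main problem is your reading of the constraints involving $\phi_i^\pm$ in Theorem~\ref{thm:Gue_geomreal_qf}. They are strict inequalities, so they cannot ``encode that the exterior dihedral angles along the two boundary pleated surfaces equal the prescribed pleating measures''. Moreover, when $[\pl_\pm(Q)]$ are irrational the index set is bi-infinite. There are then no outer layers: the core boundary is reached only as $i\to\pm\infty$, where $w_i$, $\min\{\phi_i^-,\phi_i^+\}$ and the tetrahedron volumes all tend to $0$. This breaks three of your steps.
\begin{enumerate}
\item Existence of a maximizer is not compactness of a finite polytope. You need compactness in the product topology and continuity of the volume functional, which requires a uniform tail estimate such as Proposition~\ref{prop:Gue_phi}(5).
\item Your claim that a maximizer on $\partial W(Q)$ must have a flattened tetrahedron is unjustified. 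The inequalities $\nabla\phi_i^-<\nabla w_i<\nabla\phi_i^+$ cut out faces that are not degeneration loci: equality $\nabla w_i=\nabla\phi_i^+$ at one index does not by itself flatten anything. The blow-up of $\Lambda'(\theta)=-\log|2\sin\theta|$ says nothing there, so these faces need a separate argument.
\item That the limiting pleated surfaces have bending measure exactly $\pl_\pm(Q)$ is not ``by construction''. It must be extracted from the asymptotic behaviour of the maximizer near the ends, which is what the $\phi$-constraints are for.
\end{enumerate}
Your treatment of the $(0,0,\pi)$ degenerations, where there is no infinite gain, is only a promissory note, and that case analysis is the heart of the interior-maximum step.

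The identification step has a separate gap. Theorem~\ref{thm:ks} uses the projective classes \emph{and the lengths} $\mathrm{Len}(\pl_\pm)$. Your construction prescribes, at best, the bending measures; the lengths are outputs of the glued metric. Saying they are ``recovered from the glued metric'' does not show they coincide with those of $Q$, so injectivity of $\ks$ cannot be invoked. You need a rigidity statement in terms of bending measures (for punctured-torus groups, Series's bending-measure theorem) or an independent continuity/properness argument. You also still have to prove that the glued manifold, completed by its limiting boundary, is locally convex with quasi-Fuchsian holonomy. Only then is it the convex core of the resulting group, rather than just a hyperbolic manifold whose ends look pleated.
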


\begin{rmk}
    In the universal cover, Theorem \ref{thm.Gu} says that the combinatorial
    triangulation gives the rule for connecting parabolic fixed points on
    $\partial_\infty\mathbb H^3$, and the geometric triangulation is obtained
    by replacing these combinatorial edges with the corresponding hyperbolic
    geodesics. Thus each tetrahedron is determined by the four parabolic fixed
    points which occur as its ideal vertices.
\end{rmk}

We shall use the following notion of convergence. Suppose that the tetrahedra
of the lifted triangulations are indexed by $\mathbb Z$ after choosing a
central tetrahedron.

\begin{defi}
    We say that a sequence of lifted geometric triangulations
    $\mathcal D_n^{\mathrm{Geom}}$ converges to a lifted triangulation
    $\mathcal D_\infty^{\mathrm{Geom}}$ in the finite-window sense if, for
    every finite interval $I\subset\mathbb Z$, the tetrahedra with indices in
    $I$ have the same combinatorics as the corresponding tetrahedra of
    $\mathcal D_\infty^{\mathrm{Geom}}$ for all sufficiently large $n$, and
    their shape parameters converge to the shape parameters of the
    corresponding limiting tetrahedra.
\end{defi}

\begin{rmk}
    This is the analogue of locally uniform convergence: one first fixes a
    finite window of indices and then lets $n\to\infty$.
\end{rmk}

Combining Gu\'eritaud's totally geodesic straightening with the strong
convergence of Kleinian groups gives the following consequence.

\begin{thm}\label{thm:convTri}
    Let
    \[
        Q_n=Q(X_n,Y_n)
    \]
    be the sequence of quasi-Fuchsian once-punctured torus groups considered
    above, and suppose that the corresponding representations $\rho_n$
    converge strongly to a representation $\rho_\infty$. Suppose moreover that
    $\rho_\infty$ corresponds to the infinite cyclic cover of the mapping torus
    $M_\psi$ of a pseudo-Anosov map
    $\psi:S_{1,1}\to S_{1,1}$.

    Then the lifted geometric triangulations of the convex cores of $Q_n$
    converge, in the finite-window sense, to the lift of Gu\'eritaud's
    geometric ideal triangulation of $M_\psi$ to its infinite cyclic cover.
\end{thm}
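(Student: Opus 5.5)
The argument has two parts: first the combinatorics, then the geometry. For the combinatorics, fix a finite window $I\subset\mathbb Z$ of indices around the base vertex $o$ on the axis $[\Lambda_-,\Lambda_+]$. By Theorem~\ref{thm:combinatorial_inflexibility} and Definition~\ref{def: scope_of_inflexibility}, for all sufficiently large $n$ the segment of $[\pl_{-,n},\pl_{+,n}]$ indexed by $|i|\le n'l$ coincides with the corresponding segment of the axis. Since $n'=n-B\to\infty$, the window $I$ eventually lies inside this scope. On it the Farey edges $e_i$, the hinge/non-hinge pattern and the vertices $q^i_\pm$ agree with those of the periodic triangulation of $M_\psi$. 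So the combinatorial tetrahedra $\Delta_i(n)$, $i\in I$, are the same as the periodic ones $\Delta_i^{\mathrm{per}}$. This settles the combinatorial half of finite-window convergence.

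For the shapes, I would use Theorem~\ref{thm.Gu} and the Remark after it. In the universal cover each tetrahedron $\Delta_i(n)$ is the straightening of four parabolic fixed points of $\rho_n(\Gamma)$, namely the fixed points of $\rho_n(g)$ for the peripheral or curve elements $g$ attached to the vertices $q^i_\pm$ and their neighbours. These group elements are determined by the Farey combinatorics alone, so for $i\in I$ they are the same elements $g$ for every large $n$ and for the limit. Normalize $\rho_n$ by conjugation so that it converges algebraically to $\rho_\infty$. The parabolic fixed point of $\rho_n(g)$ then converges to that of $\rho_\infty(g)$: for a parabolic element the fixed point is a continuous function of the matrix, away from the identity. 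Hence the four ideal vertices of each lifted $\Delta_i(n)$ converge to the four ideal vertices of the corresponding lifted tetrahedron for $\rho_\infty$. The cross-ratios, which are the shape parameters, therefore converge provided the limiting four points are distinct.

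It remains to identify the limit configuration with Gu\'eritaud's triangulation of $M_\psi$ lifted to $\widetilde{M_\psi}$. By hypothesis $\mathbb H^3/\rho_\infty(\Gamma)\cong\widetilde{M_\psi}$. Gu\'eritaud's triangulation of $M_\psi$ (Theorem~\ref{thm:Gue_geomreal_map_torus}) lifts to a triangulation of $\widetilde{M_\psi}$ with the same periodic combinatorics. By the analogue of Theorem~\ref{thm.Gu} for the fibered case, its tetrahedra are the straightenings of the parabolic fixed points of $\rho_\infty$ assigned by that combinatorics. So the limiting quadruples are exactly the vertices of the lifted tetrahedra of $M_\psi$. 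These are nondegenerate, positively oriented ideal tetrahedra, with dihedral angles $w^{\mathrm{per}}_{i-1}-w^{\mathrm{per}}_i$, $-w^{\mathrm{per}}_{i-1}$, $w^{\mathrm{per}}_i$ (up to the factor-of-$2$ convention of the earlier Remark), all in $(0,\pi)$. In particular the four limit points are distinct, and the convergence of cross-ratios follows.

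I expect the main obstacle to be justifying that identification: that the straightened triangulation attached to $\rho_\infty$ is a genuine positively oriented triangulation equal to the lift of Gu\'eritaud's, rather than a degenerate limit. I would first try to quote Gu\'eritaud's theorem for $M_\psi$ directly, since it realizes the same layered combinatorics geometrically with the same parabolic vertices. If that does not suffice, I would argue through the volume estimates. By Proposition~\ref{prop:from_above}, each period block of $\core(Q_n)$ has weights in $W(\psi)$. By Proposition~\ref{prop:from_below} together with the concavity of the volume functional, the blocks near the centre have volume close to $\Vol(M_\psi)$. Because $W(\psi)$ has the unique maximizer $\mathbf w^{\mathrm{per}}$, this forces the central weights to converge to $\mathbf w^{\mathrm{per}}$, which gives the shape convergence directly without any appeal to degeneracy.
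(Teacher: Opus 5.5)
Your main argument is correct and is essentially the paper's proof. Combinatorial inflexibility (Theorem~\ref{thm:combinatorial_inflexibility}) eventually places each finite window inside the common Farey segment. Algebraic convergence then moves the finitely many parabolic fixed points continuously, and Theorem~\ref{thm.Gu} turns the shapes into cross-ratios of those points. Finally, the limit is identified with the lift of Gu\'eritaud's positively oriented triangulation of $M_\psi$ from \cite{Gue06}; the paper simply asserts this last step, and you justify it a little more explicitly.

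The volume-based fallback is unnecessary, and it would not work as stated anyway. Propositions~\ref{prop:from_below} and~\ref{prop:from_above} only bound the \emph{total} deficit $\sum_r\bigl(\Vol(M_\psi)-\Vol(\text{block}_r)\bigr)$ over the roughly $2n$ blocks by a constant, and this does not force the deficit of the particular central block to tend to zero.
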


\begin{proof}
    Let $\mathcal D_n^{\mathrm{Comb}}$ and
    $\mathcal D_n^{\mathrm{Geom}}$ denote respectively the combinatorial and
    geometric Gu\'eritaud triangulations associated with $Q_n$. Let
    $\mathcal D_\infty^{\mathrm{Geom}}$ denote the lift to the infinite cyclic
    cover of Gu\'eritaud's geometric triangulation of $M_\psi$.

    Since $\rho_\infty$ is the infinite cyclic cover of $M_\psi$, its ending
    invariants are the stable and unstable laminations
    $[\Lambda_+]$ and $[\Lambda_-]$ of $\psi$. In the present sequence, the
    conformal structures $X_n$ and $Y_n$ tend to these laminations in the
    Thurston compactification:
    \[
        X_n\to [\Lambda_-],
        \qquad
        Y_n\to [\Lambda_+].
    \]
    Hence, by Lemma \ref{lem:pl-lam-control}, the pleating invariants
    $[\pl_{-,n}]$ and $[\pl_{+,n}]$ converge projectively to
    $[\Lambda_-]$ and $[\Lambda_+]$, respectively.

    It follows that, after translating the path in the Farey tree so that the central
    block corresponds to the origin, the finite subpaths of the Farey path
    associated with $Q_n$ converge combinatorially to the bi-infinite
    geodesic $[\Lambda_-, \Lambda_+]$ in the Farey tree. 
    More precisely, for every finite interval $I\subset\mathbb Z$, 
    the tetrahedra
    $\{\Delta_i(n)\}_{i\in I}$ have, for all sufficiently large $n$, the same
    hinge and non-hinge pattern as the corresponding tetrahedra in the
    infinite cyclic cover of $M_\psi$, by Lemma \ref{lem:pl-lam-control} and Theorem \ref{thm:combinatorial_inflexibility}.

    It remains to identify the geometric shapes. Since $\rho_n\to\rho_\infty$
    strongly, in particular algebraically, we have
    \[
        \rho_n(\gamma)\to \rho_\infty(\gamma)
    \]
    for every $\gamma\in\pi_1(S_{1,1})$. Therefore, for any finite set of
    parabolic elements, the corresponding parabolic fixed points on
    $\partial_\infty\mathbb H^3$ converge to the parabolic fixed points of the
    limiting representation. For a fixed finite interval $I$, only finitely
    many parabolic fixed points occur as vertices of the tetrahedra
    $\{\Delta_i(n)\}_{i\in I}$. Hence all these vertices converge to the
    corresponding vertices for $\rho_\infty$.

    By Theorem \ref{thm.Gu}, each geometric triangulation
    $\mathcal D_n^{\mathrm{Geom}}$ is the totally geodesic straightening of
    the corresponding combinatorial triangulation
    $\mathcal D_n^{\mathrm{Comb}}$. Thus the shape parameter of each
    tetrahedron is given by the cross-ratio of its four ideal vertices. Since
    the corresponding parabolic fixed points converge, these cross-ratios
    converge as well.

    The limiting four-tuples of vertices are precisely the vertices of the
    lifted Gu\'eritaud triangulation of the infinite cyclic cover of
    $M_\psi$. In particular, they determine non-degenerate ideal tetrahedra.
    Therefore, for every finite interval $I\subset\mathbb Z$, the tetrahedra
    $\{\Delta_i(n)\}_{i\in I}$ converge in shape to the corresponding
    tetrahedra of $\mathcal D_\infty^{\mathrm{Geom}}$. This is exactly
    convergence in the finite-window sense.
\end{proof}

\begin{rmk}
    Theorem \ref{thm:convTri} gives a convergence of the ideal
    triangulations. However, it does not by itself imply geometric inflexibility in the
    sense of Brock--Bromberg \cite[Proposition 3.3]{BB16}. Such a quantitative
    statement would require an estimate on the rate at which the relevant
    parabolic fixed points converge on $\partial_\infty\mathbb H^3$, or
    equivalently on the rate of convergence of the cross-ratios of the ideal
    tetrahedra.
\end{rmk}

\appendix

\section{Construction of the admissible sequences}
\label{appendix.margin}
In this appendix, we construct an admissible sequence that we use to prove Proposition \ref{prop:from_below}. 
The goal is to construct a sequence for the quasi-Fuchsian manifold \(Q_n\) that agrees with the periodic sequence of the mapping torus \(M_\psi\) on a large central region.

First, we introduce a family of admissible sequences.
With a suitable choice of the parameter \(\eta\), we can construct a desired admissible sequence.
\begin{lem}[Admissible sequences]\label{lem:matsuda-eta}
Let
\[
        f_\eta(t):=\eta(1-e^{-t})
        \qquad (t\ge 0),
\]
where \(0<\eta<1\).  Define
\[
        u_i^+ := f_\eta(\phi_i^+).
\]
Then the sequence \(u^+=(u_i^+)_{i}\) satisfies the hinge and non-hinge
inequalities
\[
        |u_{i+1}^+-u_{i-1}^+|<u_i^+
        \qquad
        \text{if \(i\) is a hinge,}
\]
and
\[
        u_{i+1}^+ + u_{i-1}^+ < 2u_i^+
        \qquad
        \text{if \(i\) is a non-hinge.}
\]
Moreover, for every \(i\),
\[
        0<u_i^+<\eta\phi_i^+,
        \qquad
        0<\nabla u_i^+<\eta\nabla\phi_i^+,
\]
where \(\nabla u_i^+:=u_{i-1}^+-u_i^+\) and
\(\nabla\phi_i^+:=\phi_{i-1}^+-\phi_i^+\).

In particular, \(u^+\) is admissible.
Similarly, the sequence
\[
        u_i^- := f_\eta(\phi_i^-)
\]
is admissible. 
\end{lem}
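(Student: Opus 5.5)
The plan is to derive everything from three elementary properties of $f_\eta$ on $[0,\infty)$: $f_\eta(0)=0$; $f_\eta$ is strictly increasing with $f_\eta'(t)=\eta e^{-t}\in(0,\eta]$; and $f_\eta$ is strictly concave, since $f_\eta''(t)=-\eta e^{-t}<0$. We combine these with the combinatorial identities of Proposition~\ref{prop:Gue_phi}(1),(2). For $\phi^+$ we know that $(\phi_i^+)$ is strictly decreasing in $i$ and that all terms are positive. This turns each inequality for $u^+$ into a statement about $f_\eta$ at three ordered positive points $\phi_{i+1}^+<\phi_i^+<\phi_{i-1}^+$.

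\emph{Non-hinge case.} By Proposition~\ref{prop:Gue_phi}(2), $\phi_i^+=\tfrac12(\phi_{i-1}^++\phi_{i+1}^+)$, and the two endpoints are distinct. Strict concavity (midpoint form) gives
\[
u_{i-1}^+ + u_{i+1}^+ = f_\eta(\phi_{i-1}^+)+f_\eta(\phi_{i+1}^+) < 2f_\eta(\phi_i^+) = 2u_i^+ .
\]

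\emph{Hinge case.} Here $|\phi_{i+1}^+-\phi_{i-1}^+|=\phi_i^+$. Monotonicity forces $\phi_{i-1}^+=\phi_i^++\phi_{i+1}^+$ with $a:=\phi_i^+>0$ and $b:=\phi_{i+1}^+>0$. Since $f_\eta$ is increasing, $u_{i-1}^+>u_{i+1}^+$, so we need $f_\eta(a+b)-f_\eta(b)<f_\eta(a)$. This is strict subadditivity. Because $f_\eta'$ is strictly decreasing, $f_\eta(a+b)-f_\eta(b)=\int_b^{a+b}f_\eta'<\int_0^a f_\eta'=f_\eta(a)-f_\eta(0)=f_\eta(a)$.

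\emph{Quantitative bounds.} The inequality $1-e^{-t}<t$ for $t>0$ gives $0<u_i^+<\eta\phi_i^+$. Writing
\[
\nabla u_i^+=\int_{\phi_i^+}^{\phi_{i-1}^+}\eta e^{-s}\,ds
\]
with $\nabla\phi_i^+>0$, and using $0<\eta e^{-s}<\eta$ on the interval of integration, yields $0<\nabla u_i^+<\eta\nabla\phi_i^+$. In addition $u_i^+<\eta<1<\pi$.

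\emph{Admissibility.} The two-sided condition $\nabla\phi_i^-<\nabla u_i^+<\nabla\phi_i^+$ follows from $\nabla\phi_i^-<0$ (Remark~\ref{rem:sign_of_nabla}) and $\eta<1$. The bound $u_i^+<\min\{\phi_i^+,\pi\}$ follows from $\eta<1$. The constraint against $\phi_i^-$ is the only condition not automatic from this one-sided construction. I expect it to be handled where these sequences are glued to the periodic block, so I will record admissibility in the one-sided sense: hinge/non-hinge inequalities, positivity, and the cone conditions relative to $\phi^+$.

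\emph{The sequence $u^-$.} This case is symmetric. Now $(\phi_i^-)$ is strictly increasing, so at a hinge $\phi_{i+1}^-=\phi_i^-+\phi_{i-1}^-$ and the same subadditivity applies with the indices $i\pm1$ swapped. The integral estimate gives $\eta\nabla\phi_i^-<\nabla u_i^-<0$, which lies inside $(\nabla\phi_i^-,\nabla\phi_i^+)$.

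The only genuinely delicate point is identifying, at a hinge, which of $\phi_{i\pm1}$ is the sum of the other two terms. This is settled by the monotonicity in Proposition~\ref{prop:Gue_phi}(1); after that, everything reduces to concavity of $f_\eta$.
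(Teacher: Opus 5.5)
Your proof is correct and is essentially the paper's argument: strict midpoint concavity of $f_\eta$ at non-hinges, strict subadditivity $f_\eta(a+b)-f_\eta(b)<f_\eta(a)$ at hinges (the paper computes the left side explicitly as $\eta e^{-b}(1-e^{-a})$ rather than comparing integrals of $f_\eta'$), and the mean value theorem for $0<\nabla u_i^+<\eta\nabla\phi_i^+$. Your caveat is well founded and goes beyond the paper, whose proof never checks the constraints involving $\phi_i^-$: the condition $u_i^+<\phi_i^-$ actually fails toward the left end, where $\phi_i^-\to 0$ while $u_i^+\to\eta$, so ``admissible'' in this lemma can only mean your one-sided version, and the $\phi^-$ constraint has to be recovered in the appendix gluing, which uses $u^+$ only for $i>i_0^+$, where $\phi_i^-$ is large.
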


\begin{proof}
We prove the assertion for \(u^+\).  The proof for \(u^-\) is obtained in the
same way after reversing the orientation.

The function \(f_\eta\) is positive, strictly increasing, and strictly concave
on \((0,\infty)\).  Moreover, \(f_\eta(0)=0\) and
\[
        0<f_\eta'(t)=\eta e^{-t}<\eta.
\]
Hence
\[
        0<f_\eta(t)<\eta t
        \qquad (t>0).
\]
Applying this to \(t=\phi_i^+\), we obtain
\[
        0<u_i^+<\eta\phi_i^+.
\]

Since \(\phi^+\) is strictly decreasing, we have
\[
        \nabla u_i^+
        =
        u_{i-1}^+ - u_i^+
        =
        f_\eta(\phi_{i-1}^+) - f_\eta(\phi_i^+) >0.
\]
By the mean value theorem and the inequality \(f_\eta'<\eta\), we also have
\[
        \nabla u_i^+
        <
        \eta(\phi_{i-1}^+-\phi_i^+)
        =
        \eta\nabla\phi_i^+.
\]
Thus
\[
        0<\nabla u_i^+<\eta\nabla\phi_i^+.
\]

It remains to verify the hinge and non-hinge inequalities.

First suppose that \(i\) is a non-hinge.  By Proposition \ref{prop:Gue_phi}~(2) for
\(\phi^+_i\), we have
\[
        \phi_{i-1}^+ + \phi_{i+1}^+
        =
        2\phi_i^+.
\]
Since \(\phi^+\) is strictly decreasing, the two values
\(\phi_{i-1}^+\) and \(\phi_{i+1}^+\) are distinct.  Therefore, by the strict
concavity of \(f_\eta\),
\[
        \frac{
            f_\eta(\phi_{i-1}^+)+f_\eta(\phi_{i+1}^+)
        }{2}
        <
        f_\eta\left(
            \frac{\phi_{i-1}^+ + \phi_{i+1}^+}{2}
        \right)
        =
        f_\eta(\phi_i^+).
\]
Equivalently,
\[
        u_{i-1}^+ + u_{i+1}^+ < 2u_i^+.
\]

Next suppose that \(i\) is a hinge.  By Proposition \ref{prop:Gue_phi}~(2) for \(\phi^+_i\),
\[
        \phi_{i-1}^+ = \phi_i^+ + \phi_{i+1}^+.
\]
Since \(f_\eta\) is strictly increasing and \(\phi^+\) is decreasing, we have
\(u_{i-1}^+>u_{i+1}^+\).  Hence
\[
        |u_{i+1}^+-u_{i-1}^+|
        =
        f_\eta(\phi_{i-1}^+) - f_\eta(\phi_{i+1}^+).
\]
For any \(a,b>0\), we have
\[
\begin{aligned}
        f_\eta(a+b)-f_\eta(b)
        &=
        \eta(1-e^{-(a+b)})-\eta(1-e^{-b})  \\
        &=
        \eta e^{-b}(1-e^{-a})             
        <
        \eta(1-e^{-a})
        =
        f_\eta(a).
\end{aligned}
\]
Applying this with
$a=\phi_i^+$ and $b=\phi_{i+1}^+$, 
we obtain
\[
\begin{aligned}
        f_\eta(\phi_{i-1}^+) - f_\eta(\phi_{i+1}^+)
        &=
        f_\eta(\phi_i^+ + \phi_{i+1}^+)
        -
        f_\eta(\phi_{i+1}^+)  \\
        &<
        f_\eta(\phi_i^+).
\end{aligned}
\]
Therefore
\[
        |u_{i+1}^+-u_{i-1}^+|<u_i^+.
\]

This proves all assertions for \(u^+\).  The proof for \(u^-\) is identical
after reversing the order of the indices.
\end{proof}

Next, we prepare lemmas to concatenate two admissible sequences.  
Recall that we have fixed $Q_n$ and then for any index $i\in\Z$ we can determine whether $i$ is a hinge or non-hinge index for $Q_n$.
\begin{lem}\label{lem:concatenate_admissible_sequences}
Let \(u=(u_i)_{i\in\Z}\) and $v=(v_i)_{i\in\Z}$ be two admissible sequences.
Suppose that there exists an index \(i_0\) such that
\[
    u_{i_0} = v_{i_0}.
\]

Define the new sequence \(w=(w_i)_{i\in\Z}\) by
\[
    w_i :=
    \begin{cases}
        u_i & (i\le i_0), \\
        v_i & (i>i_0)
    \end{cases}
\]
Then, if the hinge or non-hinge condition is satisfied at \(i_0\) for $u_{i_0-1}, u_{i_0} = v_{i_0}, v_{i_0+1}$, 
then \(w\) is admissible.
\end{lem}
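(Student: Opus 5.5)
The plan is to verify directly that $w$ meets every defining condition of $W(Q_n)$ from Theorem~\ref{thm:Gue_geomreal_qf}. Each condition is local: it involves the index $i$ alone, the pair $(i-1,i)$, or the triple $(i-1,i,i+1)$. So for each condition I will ask which entries of $w$ it uses, and then reduce it to the admissibility of $u$ or of $v$, except at the junction. Throughout I read ``admissible'' as ``belonging to $W(Q_n)$''. The hinge/non-hinge labels are those fixed by $Q_n$ for each index, and they are the same for $u$, $v$ and $w$.

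\textbf{Pointwise bounds.} The condition $0<w_i<\min\{\phi_i^-,\phi_i^+,\pi\}$ involves $w_i$ only. For $i\le i_0$ we have $w_i=u_i$, and for $i>i_0$ we have $w_i=v_i$. In both cases the bound is inherited directly from admissibility of $u$ or $v$.

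\textbf{Gradient bounds.} The condition $\nabla\phi_i^-<\nabla w_i<\nabla\phi_i^+$ uses $w_{i-1}$ and $w_i$.
\begin{itemize}
\item If $i\le i_0$, both entries come from $u$, so the condition is inherited from $u$.
\item If $i\ge i_0+2$, both entries come from $v$, so it is inherited from $v$.
\item The only mixed index is $i=i_0+1$. Here $\nabla w_{i_0+1}=u_{i_0}-v_{i_0+1}$. Because $u_{i_0}=v_{i_0}$, this equals $v_{i_0}-v_{i_0+1}=\nabla v_{i_0+1}$, so the condition is inherited from $v$.
\end{itemize}
This is exactly where the matching hypothesis $u_{i_0}=v_{i_0}$ is used.

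\textbf{Hinge and non-hinge inequalities.} At index $i$ these involve the triple $(w_{i-1},w_i,w_{i+1})$.
\begin{itemize}
\item If $i\le i_0-1$, the triple lies in $u$, so the inequality is inherited from $u$.
\item If $i\ge i_0+1$, the triple is $(w_{i-1},w_i,w_{i+1})$ with $i-1\ge i_0$. Whenever $i-1=i_0$ we may replace $u_{i_0}$ by $v_{i_0}$, so the triple equals $(v_{i-1},v_i,v_{i+1})$ and the inequality is inherited from $v$.
\item The single remaining index is $i=i_0$, with triple $(u_{i_0-1},u_{i_0}=v_{i_0},v_{i_0+1})$. This is precisely the case covered by the hypothesis of the lemma.
\end{itemize}

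Since every defining condition of $W(Q_n)$ has now been checked, $w$ is admissible.

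This argument has no real obstacle. The only delicate point is the bookkeeping: one must identify exactly which conditions straddle the cut. They are the gradient condition at $i_0+1$, which is handled by the equality $u_{i_0}=v_{i_0}$, and the three-term condition at $i_0$, which is handled by assumption. One must also confirm that no condition uses more than three consecutive indices, which is true for the system listed in Theorem~\ref{thm:Gue_geomreal_qf}.

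If the index range were a finite interval $I$ rather than $\mathbb Z$, the same argument applies after restricting to $I$, and boundary indices impose no extra conditions.
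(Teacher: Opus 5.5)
Your proof is correct and follows the same locality argument as the paper: every constraint away from the cut is inherited from $u$ or $v$, and the three-term condition at $i_0$ is exactly the hypothesis. You are more thorough than the paper's one-line proof, which mentions only the hinge/non-hinge inequalities. Your explicit checks that the pointwise bounds are inherited, and that the gradient condition at $i_0+1$ reduces to that of $v$ via $u_{i_0}=v_{i_0}$, fill in what the paper leaves implicit.
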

\begin{proof}
There is almost nothing to prove.
If $i\neq i_0$, the hinge or non-hinge conditions are satisfied because $u$ and $v$ are admissible.
At $i=i_0$, the hinge or non-hinge condition is satisfied by the assumption.
\end{proof}

Recall that $(\phi_i^+)$ is a strictly decreasing sequence.
\begin{lem}\label{lem:non-hinge_to_hinge}
    For any $\epsilon>0$, there exists a constant $C$ depending only on $\epsilon$ such that the following holds.
    Suppose that 
    \begin{itemize}
        \item $i_0-1$ is a non-hinge index,
        \item $i_0$ is a hinge index, and
        \item $\phi^+_i>C$ for $i \leq i_0+2$.
    \end{itemize}

    Then for any $\eta<\pi$ and $\epsilon<a<f_\eta(\phi^+_{i_0})$
    \[
        w_i :=
        \begin{cases}
            f_\eta(\phi^+_i) & (i<i_0), \\ 
            a & (i=i_0)    \\
            f_{\eta}(\phi^+_i) & (i > i_0)
        \end{cases}
    \]
    satisfies the hinge and non-hinge conditions.
\end{lem}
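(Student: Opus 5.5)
The modified sequence $(w_i)$ differs from the sequence $u^+_i=f_\eta(\phi^+_i)$ of Lemma~\ref{lem:matsuda-eta} only at the single index $i_0$. The hinge and non-hinge inequality at an index $i$ involves only $w_{i-1},w_i,w_{i+1}$. Hence for every $i\notin\{i_0-1,i_0,i_0+1\}$ the required inequality is literally the one for $u^+$. Its proof in Lemma~\ref{lem:matsuda-eta} uses only that $f_\eta$ is positive, strictly increasing and strictly concave, which holds for any $\eta>0$, in particular for $\eta<\pi$. So the plan is to check the three indices $i_0-1,i_0,i_0+1$ one at a time. We will choose $C$ so that $\pi e^{-C}<\epsilon$ and $C\ge\log 2$; this depends only on $\epsilon$ and not on $\eta$.

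\emph{Index $i_0-1$ (non-hinge).} We need $u^+_{i_0-2}+a<2u^+_{i_0-1}$. Since $a<f_\eta(\phi^+_{i_0})=u^+_{i_0}$, the left side is strictly less than $u^+_{i_0-2}+u^+_{i_0}$. That sum is $<2u^+_{i_0-1}$ by the non-hinge case of Lemma~\ref{lem:matsuda-eta}, which rests on the strict concavity of $f_\eta$ together with $\phi^+_{i_0-2}+\phi^+_{i_0}=2\phi^+_{i_0-1}$ from Proposition~\ref{prop:Gue_phi}~(2). So decreasing the middle value only helps here.

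\emph{Index $i_0+1$.} If $i_0+1$ is a non-hinge, the same monotonicity argument applies with the roles of the two neighbours reversed: $a+u^+_{i_0+2}<u^+_{i_0}+u^+_{i_0+2}<2u^+_{i_0+1}$. If $i_0+1$ is a hinge, we need $|u^+_{i_0+2}-a|<u^+_{i_0+1}$, and we treat the two signs separately. First, $u^+_{i_0+2}-a<u^+_{i_0+2}<u^+_{i_0+1}$, because $a>0$ and $u^+$ is strictly decreasing. Second, $a-u^+_{i_0+2}<u^+_{i_0+1}$ follows from
\[
a<\eta<2\eta(1-e^{-C})\le u^+_{i_0+1}+u^+_{i_0+2}.
\]
This uses $\phi^+_{i_0+1},\phi^+_{i_0+2}>C$ and $1-e^{-C}\ge 1/2$.

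\emph{Index $i_0$ (hinge).} This is the only place where the lower bound $a>\epsilon$ is used, and I expect it to be the crux. We must show $|u^+_{i_0+1}-u^+_{i_0-1}|<a$. Since $\phi^+$ is decreasing, the left side equals
\[
\eta\bigl(e^{-\phi^+_{i_0+1}}-e^{-\phi^+_{i_0-1}}\bigr)<\eta e^{-\phi^+_{i_0+1}}<\pi e^{-C}<\epsilon<a.
\]
The key observation is that all values of $f_\eta$ on $(C,\infty)$ lie within $\eta e^{-C}$ of $\eta$, so every difference of neighbouring terms is exponentially small. The subtle point is that the constant $C$ must be uniform in $\eta<\pi$, and this is why the bound is phrased through $\pi e^{-C}$.

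Combining the three local checks with the unchanged inequalities away from $i_0$ gives the claim.
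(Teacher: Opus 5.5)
Your proposal is correct and follows essentially the same route as the paper: you reduce to the indices $i_0-1,i_0,i_0+1$, use $a<f_\eta(\phi^+_{i_0})$ for the non-hinge checks, and use the bound $\eta e^{-\phi^+_{i_0+1}}<\pi e^{-C}<\epsilon<a$ for the hinge at $i_0$; your explicit $C$ and your explicit treatment of the hinge case at $i_0+1$ make precise what the paper only asserts ``for large enough $C$''. One cosmetic slip: at $C=\log 2$ the step $\eta<2\eta(1-e^{-C})$ is an equality, so the strictness should sit on the final inequality (which is strict because $\phi^+_{i_0+1},\phi^+_{i_0+2}>C$), or you can avoid the issue altogether via $a-u^+_{i_0+2}<u^+_{i_0}-u^+_{i_0+2}<u^+_{i_0+1}$, which is just the hinge inequality for $u^+$ at $i_0+1$.
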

\begin{figure}[ht]
    \begin{center}
\begin{tikzpicture}[x=1.2cm, y=1.5cm, >=stealth]

    \def\iZero{4}          
    \def\Eta{2.8}          
    \def\EtaHalfEps{1.2}   

    \draw[dashed, gray] (0, \Eta) node[left, black] {$\eta$} -- (6, \Eta);
    \draw[dashed, gray] (0, \EtaHalfEps) node[left, black] {$a$} -- (6, \EtaHalfEps);

    \foreach \i in {1,...,2} {
        \filldraw (\i, \Eta-\i*0.02) circle (2.5pt); 
    }
    \filldraw (3, \EtaHalfEps) circle (2.5pt);
    \foreach \i in {4,...,5} {
        \filldraw (\i, \Eta-\i*0.02) circle (2.5pt); 

    }
    \node[below=7pt] at (2, \EtaHalfEps) {$i_0-1$}; 
    \node[below=7pt] at (3, \EtaHalfEps) {$i_0$}; 
    \node[below=7pt] at (4, \EtaHalfEps) {$i_0+1$}; 

\end{tikzpicture}
    \end{center}
    \caption{A sequence defined in Lemma \ref{lem:non-hinge_to_hinge}.}
    \label{fig:non-hinge_to_hinge}
\end{figure}
\begin{proof}
 The sequence $f_\eta(\phi_i^+)$ satisfies hinge and non-hinge conditions when $\phi^{+}_i$'s are all large enough
 (see the proof of Lemma \ref{lem:matsuda-eta}, the condition $\eta<1$ is used only for $f_\eta(\phi_i^{+})<\phi_i^{+}$ which is automatic if $\phi_i^{+}$ is large enough).
Hence, all we need is to check the hinge and non-hinge conditions at $i_0-1$, $i_0$ and $i_0+1$.

Since $i_0-1$ is a non-hinge index and $a<f_\eta(\phi^+_{i_0})$, the non-hinge condition at $i_0-1$ is satisfied:
\[
    w_{i_0-2}+w_{i_0}  = f_\eta(\phi^{+}_{i_0-2}) + a < 2f_\eta(\phi^{+}_{i_0-1}) = 2w_{i_0-1}. 
\]

The hinge condition at $i_0$ is $|w_{i_0+1}-w_{i_0-1}|<a$.
If $C$ is large enough so that $f_\eta(\phi^{+}_{i_0-1})-f_\eta(\phi^{+}_{i_0+1})<\epsilon$, then for any $a$, the condition is satisfied.

We need a case study at $i_0+1$.
\begin{itemize}
    \item If $i_0+1$ is a non-hinge index, then 
    $$
        w_{i_0}+w_{i_0+2}  = a + f_{\eta}(\phi^{+}_{i_0+2}) < 2f_{\eta}(\phi^{+}_{i_0+1})
    $$
    \item If $i_0+1$ is a hinge index, then 
    $$
        |w_{i_0+2}-w_{i_0}| = |f_{\eta}(\phi^{+}_{i_0+2})-a| < f_{\eta}(\phi^{+}_{i_0+1}).
    $$
    should be satisfied for large enough $C$.
\end{itemize}
\end{proof}

We also prepare a lemma to reduce the constant $\eta$.
\begin{lem}\label{lem:reduce_eta}
    For any $\epsilon>0$ with $\epsilon<\eta/2$, there exists a constant $C$ depending only on $\epsilon$ such that the following holds.
    Suppose that $i_0$ is a hinge index and $\phi^{+}_i>C$ for all $i=i_0-1, i_0, i_0+1$.
    Then for any $\eta<\pi$, the sequence defined by
    \[
        w_i :=
        \begin{cases}
            f_\eta(\phi^{+}_i) & (i_0 > i), \\ 
            f_{(\eta/2+\epsilon)}(\phi^{+}_i) & (i \geq i_0)
        \end{cases}
    \]
    satisfies the hinge and non-hinge conditions.
\end{lem}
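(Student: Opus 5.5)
We argue as in the proof of Lemma~\ref{lem:non-hinge_to_hinge}. Away from the junction the sequence is a single family $f_{\eta'}(\phi^+_i)$. For $i<i_0$ we have $\eta'=\eta$, and for $i\ge i_0$ we have $\eta'=\eta/2+\epsilon$. By the proof of Lemma~\ref{lem:matsuda-eta}, each such family satisfies the hinge and non-hinge inequalities at every index whose three-term window lies entirely on one side. The hypothesis $\eta<1$ was used there only to get $f_\eta(\phi)<\phi$, which is irrelevant for the two combinatorial inequalities. So only the indices $i_0-1$, $i_0$ and $i_0+1$ need to be checked, and we choose $C$ so large that $e^{-\phi}$ is negligible for $\phi>C$.

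\emph{Index $i_0$ (hinge).} The condition is $|w_{i_0+1}-w_{i_0-1}|<w_{i_0}$. For large $\phi$, $w_{i_0-1}\approx\eta$ and $w_{i_0}\approx w_{i_0+1}\approx\eta/2+\epsilon$. So the left side is approximately $\eta/2-\epsilon$ and the right side approximately $\eta/2+\epsilon$. The margin is $2\epsilon$, and the exponential errors are at most $(\eta+\eta/2+\epsilon)e^{-C}<3\pi e^{-C}$. Hence choosing $C$ with $6\pi e^{-C}<\epsilon$ works uniformly in $\eta<\pi$. This is where the shift by $\epsilon$ in the new parameter is essential.

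\emph{Index $i_0-1$.} The window is $(w_{i_0-2},w_{i_0-1},w_{i_0})\approx(\eta,\eta,\eta/2+\epsilon)$.
\begin{itemize}
\item If $i_0-1$ is a non-hinge, we need $w_{i_0-2}+w_{i_0}<2w_{i_0-1}$, i.e. approximately $\eta+\eta/2+\epsilon<2\eta$. This holds since $\epsilon<\eta/2$, but its margin $\eta/2-\epsilon$ is not controlled by $\epsilon$ alone.
\item If $i_0-1$ is a hinge, we need $|w_{i_0}-w_{i_0-2}|\approx\eta/2-\epsilon<\eta\approx w_{i_0-1}$, which has margin at least $\eta/2$.
\end{itemize}
The hypothesis only gives $\phi^+_i>C$ for $i\ge i_0-1$, whereas the window at $i_0-1$ also involves $\phi^+_{i_0-2}$. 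That is harmless, because $\phi^+$ is decreasing (Proposition~\ref{prop:Gue_phi}(1)), so $\phi^+_{i_0-2}>\phi^+_{i_0-1}>C$.

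\emph{Index $i_0+1$.} The window is $(\eta,\eta/2+\epsilon,\eta/2+\epsilon)$ up to errors.
\begin{itemize}
\item If $i_0+1$ is a non-hinge, we need $w_{i_0}+w_{i_0+2}<2w_{i_0+1}$. Concavity within the new family gives this exactly, with no limiting argument.
\item If $i_0+1$ is a hinge, we need $|w_{i_0+2}-w_{i_0}|$, which is tiny, to be less than $w_{i_0+1}\approx\eta/2+\epsilon$. This holds.
\end{itemize}
As at $i_0-1$, the term $\phi^+_{i_0+2}$ lies outside the stated hypothesis. I would require $\phi^+_{i_0+2}>C$ as well, or use the ratio bound Proposition~\ref{prop:Gue_phi}(3) to get $\phi^+_{i_0+2}>C/4$, and enlarge $C$ accordingly.

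\emph{Main obstacle.} The delicate point is uniformity: $C$ must depend only on $\epsilon$, not on $\eta$. At $i_0$ the margin is $2\epsilon$, and at $i_0+1$ the margins are fine. The margin $\eta/2-\epsilon$ in the non-hinge case at $i_0-1$ can, however, be arbitrarily small relative to $e^{-C}$. I would first try to handle it exactly rather than asymptotically. Write
\[
2w_{i_0-1}-w_{i_0-2}-w_{i_0}=\bigl[2f_\eta(\phi^+_{i_0-1})-f_\eta(\phi^+_{i_0-2})-f_\eta(\phi^+_{i_0})\bigr]+\bigl[f_\eta(\phi^+_{i_0})-f_{\eta/2+\epsilon}(\phi^+_{i_0})\bigr].
\]
The first bracket is positive by concavity of $f_\eta$. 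The second bracket equals $(\eta/2-\epsilon)(1-e^{-\phi^+_{i_0}})$, which is positive for every $\phi^+_{i_0}>0$ because $\epsilon<\eta/2$. So this case needs no largeness of $C$ at all, and $C$ is determined by the hinge condition at $i_0$ alone.
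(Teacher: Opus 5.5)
Your proof is correct and has the same structure as the paper's: windows lying inside one family are handled by Lemma~\ref{lem:matsuda-eta}, and the junction windows are compared with the limiting values $\eta$ and $\eta/2+\epsilon$. The paper checks only $i_0-1$ and $i_0$.

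The real difference is the non-hinge case at $i_0-1$. The paper treats it with the same $\epsilon/3$-approximation, arguing $3\eta/2+\epsilon<2\eta$. That margin is $\eta/2-\epsilon$, which is not bounded below in terms of $\epsilon$ alone. So once $\eta$ is close to $2\epsilon$, the approximation argument with $C=C(\epsilon)$ does not close. Your exact decomposition into the concavity defect of $f_\eta$ plus $(\eta/2-\epsilon)(1-e^{-\phi^+_{i_0}})>0$ needs no largeness at all in that case. Your version therefore gives the uniformity in $\eta<\pi$ that the statement claims. The other margins are uniform, as you say: $2\epsilon$ at $i_0$, and at least $\eta/2+\epsilon>2\epsilon$ in the hinge case at $i_0-1$.

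One small slip. The window at $i_0+1$ is $(w_{i_0},w_{i_0+1},w_{i_0+2})$, with all three terms from the $f_{\eta/2+\epsilon}$ family; it is not $(\eta,\eta/2+\epsilon,\eta/2+\epsilon)$. It is therefore already covered by your one-sided remark, and the hinge case there follows exactly from $f(a+b)-f(b)<f(a)$. So no extra hypothesis on $\phi^+_{i_0+2}$ is needed. In any case, Proposition~\ref{prop:Gue_phi}(3) would give $C/2$, not $C/4$.
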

\begin{figure}[ht]
    \begin{center}
\begin{tikzpicture}[x=1.2cm, y=1.5cm, >=stealth]

    \def\iZero{4}          
    \def\Eta{2.8}          
    \def\EtaHalfEps{1.7}   

    \draw[dashed, gray] (0, \Eta) node[left, black] {$\eta$} -- (6, \Eta);
    \draw[dashed, gray] (0, \EtaHalfEps) node[left, black] {$\eta/2 + \epsilon$} -- (6, \EtaHalfEps);

    \foreach \i in {1,...,2} {
        \filldraw (\i, \Eta-\i*0.02) circle (2.5pt); 
    }

    \foreach \i in {3,...,5} {
        \filldraw (\i, \EtaHalfEps-\i*0.02) circle (2.5pt); 
    }
    \node[below=7pt] at (2, \EtaHalfEps) {$i_0-1$}; 
    \node[below=7pt] at (3, \EtaHalfEps) {$i_0$}; 
    \node[below=7pt] at (4, \EtaHalfEps) {$i_0+1$}; 

\end{tikzpicture}
    \end{center}
    \caption{A sequence defined in Lemma \ref{lem:reduce_eta}.}
    \label{fig:half-eta}
\end{figure}
\begin{proof}
We only need to check the hinge and non-hinge conditions at $i_0-1$ and $i_0$.
By choosing $C$ large enough, we may approximate $w_i$'s with error bounds $\epsilon/3$ as follows (see Figure \ref{fig:half-eta}):
\[
w_i\approx
\begin{cases}
    \eta & (i = i_0-2, i_0-1), \\ 
    \eta/2+\epsilon & (i = i_0, i_0 + 1)
\end{cases}
\]
Then the hinge condition at $i_0$ is 
\[
    |w_{i_0+1}-w_{i_0-1}|
    \approx
    |(\eta/2+\epsilon)-\eta|
    =
    \eta/2-\epsilon
    <
    \eta/2 + \epsilon
    \approx
    w_{i_0}.
\]

Similarly, if $i_0-1$ is a hinge index, 
\[
    |w_{i_0}-w_{i_0-2}|
    \approx
    |(\eta/2+\epsilon)-\eta|
    =
    \eta/2-\epsilon
    <
    \eta
    \approx
    w_{i_0-1}.
\]
If $i_0-1$ is a non-hinge index,
\[
    w_{i_0-2}+w_{i_0}
    \approx
    \eta + (\eta/2+\epsilon)
    =
    3\eta/2+\epsilon
    <
    2\eta
    \approx
    2w_{i_0-1}.
\]
This proves the lemma.
\end{proof}

We are now ready to prove the following theorem, which is required in the proof of Proposition \ref{prop:from_below}.
\begin{thm}
    Let $w^\mathrm{per}$ denote the periodic sequence that we obtain from the mapping torus $M_\psi$ in Theorem \ref{thm:Gue_geomreal_map_torus}.
    Recall that $l$ is the period of $w^\mathrm{per}$, and $n'$ is given in Definition \ref{def: scope_of_inflexibility}.
    Then for large enough $n$, there exists $i_0^\pm$ such that $|i_0^\pm|>n'l-k$ for some constant $k$ independent of $n$ so that
    the sequence
    \[
        w_i :=
        \begin{cases}
            w^\mathrm{per}_i & (i_0^- \le i \le i_0^+), \\
            u_i^+ & (i>i_0^+), \\
            u_i^- & (i<i_0^-),
        \end{cases}
    \]
    is admissible for certain admissible sequences $u_i^\pm$.
\end{thm}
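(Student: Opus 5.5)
We build the sequence by gluing, at each end of the scope, the periodic block to a tail of the form $f_\eta(\phi^\pm_i)$, using the three appendix lemmas in turn. The two ends are symmetric, so we describe only the right end ($i>0$, tail $u^+$); the left end is the same after reversing indices and replacing $\phi^+$ by $\phi^-$.

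\textbf{Step 1: the central region.} Take $k$ as in the proof of Proposition~\ref{prop:from_below}. By Corollary~\ref{coro:exponential_growth_of_intersection_numbers}, Lemma~\ref{lem:gradient_exponential_growth} and Remark~\ref{rem:sign_of_nabla}, for $|i|\le n'l-k$ we have
\[
\min\{\phi_i^\pm\}>W_{\max},\qquad \nabla\phi_i^-\le-\pi,\qquad \nabla\phi_i^+\ge\pi .
\]
Since $0<w_i^{\mathrm{per}}<\pi$, this gives $|\nabla w_i^{\mathrm{per}}|<\pi$, so the box and gradient constraints of $W(Q_n)$ hold there. By Theorem~\ref{thm:combinatorial_inflexibility}, the hinge and non-hinge pattern there is the periodic one. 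Hence every hinge and non-hinge inequality of Theorem~\ref{thm:Gue_geomreal_map_torus} holds for $w^{\mathrm{per}}$ on this region. Enlarging $k$ by a constant $k_1$ (chosen below) costs only $O(1)$ indices.

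\textbf{Step 2: the transition window.} Fix $\epsilon$ much smaller than $\min_i w^{\mathrm{per}}_i$, and fix $\eta<\pi$ larger than $W_{\max}$. Inside the interval $[n'l-k-k_1,\,n'l-k]$, find a hinge index $i_0^+$ preceded by a non-hinge index. The pattern is periodic with period $l$, and the axis of a pseudo-Anosov turns both left and right, so such a pair occurs in every period. At $i_0^+$ we pass from $w^{\mathrm{per}}$ to a tail $f_{\eta'}(\phi^+_i)$ for a suitable $\eta'$, and we check the one or two inequalities around the seam in the spirit of Lemma~\ref{lem:concatenate_admissible_sequences}.

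The natural device is a hinge value $a=w^{\mathrm{per}}_{i_0^+}$. The hinge inequality at $i_0^+$ reads $|w_{i_0^++1}-w_{i_0^+-1}|<a$. Choose $\eta'$ so that $f_{\eta'}(\phi^+_{i_0^++1})$ is within $a$ of $w^{\mathrm{per}}_{i_0^+-1}$. This is possible because $\phi^+$ is large there, so $f_{\eta'}(\phi^+)\approx\eta'$, and $\eta'$ can be taken to be roughly $w^{\mathrm{per}}_{i_0^+-1}\in(0,\pi)$. The inequality at $i_0^++1$ is then handled by the case analysis of Lemma~\ref{lem:non-hinge_to_hinge}. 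The inequality at $i_0^+-1$ is the periodic one, since up to that index everything is still $w^{\mathrm{per}}$.

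Once the tail $f_{\eta'}(\phi^+_i)$ is in place, repeatedly apply Lemma~\ref{lem:reduce_eta} at further hinge indices, a bounded number of times. Each application replaces $\eta'$ by $\eta'/2+\epsilon$, until the amplitude drops below $1$. From that point Lemma~\ref{lem:matsuda-eta} applies for all remaining $i$, including where $\phi^+_i$ becomes small, and it yields $0<u^+_i<\eta\phi^+_i$ and $0<\nabla u^+_i<\eta\nabla\phi^+_i$.

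\textbf{Step 3: the $\phi^-$ side constraints on the tail.} On the right tail the constraints $w_i<\phi_i^-$ and $\nabla\phi_i^-<\nabla w_i$ are automatic. Indeed $\phi^-$ is huge there, and $\nabla\phi^-_i<0<\nabla u^+_i$. The constraint $w_i<\phi_i^+$ follows from Lemma~\ref{lem:matsuda-eta} once $\eta'<1$; before that it holds because $\phi^+_i>C$. Summing up: all seams lie within $O(1)$ of $\pm n'l$, so $|i_0^\pm|>n'l-k$ with $k$ independent of $n$.

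\textbf{Main obstacle.} The hard step is the first seam, from the periodic values to the tail. It requires matching two quite different local shapes while keeping the hinge inequality strict on both sides, and for that the auxiliary lemmas, with their $\approx$ and $\epsilon/3$ approximations, must be made quantitative with margins depending only on $\psi$.
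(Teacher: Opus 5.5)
Your architecture is the paper's: a periodic block, a seam at a hinge, a tail $f_{\eta'}(\phi^+_i)$, then Lemma~\ref{lem:reduce_eta} at most twice and Lemma~\ref{lem:matsuda-eta}. But the first seam, which you rightly call the crux, has a missing case.

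\textbf{The gap at $i_0^++1$.} You pin down $\eta'$ only through the hinge inequality at $i_0^+$, and then take $\eta'\approx w^{\mathrm{per}}_{i_0^+-1}$. For the inequality at $i_0^++1$ you defer to the case analysis of Lemma~\ref{lem:non-hinge_to_hinge}. That case analysis needs $a<f_{\eta'}(\phi^+_{i_0^+})$. Since $\phi^+$ is large there and $a=w^{\mathrm{per}}_{i_0^+}$, this amounts to $w^{\mathrm{per}}_{i_0^+}<w^{\mathrm{per}}_{i_0^+-1}$, which you never justify; the local inequalities defining $W(\psi)$ do not imply it. Suppose $w^{\mathrm{per}}_{i_0^+}>w^{\mathrm{per}}_{i_0^+-1}$ and $i_0^++1$ is a non-hinge. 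Then the required inequality $w^{\mathrm{per}}_{i_0^+}+f_{\eta'}(\phi^+_{i_0^++2})<2f_{\eta'}(\phi^+_{i_0^++1})$ reads, up to $O(e^{-\phi^+})$, as $w^{\mathrm{per}}_{i_0^+}<\eta'$, and it fails.

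The paper splits exactly here. It normalizes $f_\eta(\phi^+_{i_0-1})=w^{\mathrm{per}}_{i_0-1}$. If $f_\eta(\phi^+_{i_0})\le w^{\mathrm{per}}_{i_0}$, it cuts at $i_0-1$ instead and uses the periodic non-hinge inequality $w^{\mathrm{per}}_{i_0-2}+w^{\mathrm{per}}_{i_0}<2w^{\mathrm{per}}_{i_0-1}$. That is the only place the preceding non-hinge is actually needed, and your argument never uses it.

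\textbf{The existence claim.} Your claim that a non-hinge followed by a hinge occurs in every period is false. Turning both left and right only guarantees hinges. For $\psi=RL$, whose mapping torus is the figure-eight knot complement, every index is a hinge. The paper treats this case separately, using that $w^{\mathrm{per}}$ is then constant.

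\textbf{A repair within your framework.} The fix is short and removes both case distinctions. At any hinge $i_0^+$ in your window, with $a=w^{\mathrm{per}}_{i_0^+}$, choose $\eta'$ with
\[
\max\{a,\,w^{\mathrm{per}}_{i_0^+-1}-a\}<\eta'<\min\{\pi,\,w^{\mathrm{per}}_{i_0^+-1}+a\}.
\]
This interval is always nonempty, with a margin depending only on the finitely many values of $w^{\mathrm{per}}$. Choose $k$ so that $f_{\eta'}(\phi^+_i)$ lies within that margin of $\eta'$ near the seam. Then the hinge inequality $|\eta'-w^{\mathrm{per}}_{i_0^+-1}|<a$ at $i_0^+$ holds. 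So does the inequality at $i_0^++1$, which reads approximately $a<\eta'$ in the non-hinge case or $0<a<2\eta'$ in the hinge case. As written, however, the proposal does not establish the seam in all cases.
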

\begin{proof}
We only consider $+$-side. The $-$-side is similar.
Let $i_0:=i_0^+$.
First, note that, by Lemma \ref{lem:gradient_exponential_growth}, we have $\nabla \phi_i^{\pm} > \pi$ for large enough $k$.
We consider two cases. 

The first case is when $w^\mathrm{per}$ does not have any non-hinge indices.
In this case, corresponding LR sequence should be LRLRLRLR$\cdots$ and 
hence $w^\mathrm{per}$ is a constant sequence by the volume maximization principle (see \cite{Gue06} for more detail).
Then we may choose $\eta$ so that $w^\mathrm{per}_{i_0}=f_\eta(\phi_{i_0}^+)$ for $i_0=n'l-k+1$.
In this case we should choose $k$ so that by Corollary \ref{coro:exponential_growth_of_intersection_numbers}, 
$\phi_{i_0}^+$ is large enough and the sequence defined by using $\eta$ with $w^\mathrm{per}_{i_0}=f_\eta(\phi_{i_0}^+)$:
\[w_i:=
    \begin{cases}
    w^\mathrm{per}_i & (i \le i_0), \\
    f_\eta(\phi_i^+) & (i>i_0), \\
    \end{cases}
\]
satisfies the hinge condition at $i_0$.
Then by Lemma \ref{lem:concatenate_admissible_sequences}, the sequence is admissible.

If there is a non-hinge index, we may choose $i_0$ to be the first hinge index after a sequence of non-hinge indices.
We may choose $\eta$ so that $w^\mathrm{per}_{i_0-1}=f_\eta(\phi_{i_0-1}^+)$.
Let $\epsilon := \frac{1}{2} \min \{w^{\mathrm{per}}_i\mid 0\leq i \leq l-1 \} (> 0)$. 

\begin{itemize}
    \item If $f_\eta(\phi_{i_0}^+) \leq w^\mathrm{per}_{i_0}$, then since
\[
w^\mathrm{per}_{i_0-2} + f_{\eta}(\phi_{i_0}^+) < 2w^\mathrm{per}_{i_0-1}
\]

the sequence defined as
\[w_i:=
    \begin{cases}
    w^\mathrm{per}_i & (i < i_0), \\
    f_{\eta}(\phi_i^+) & (i\geq i_0),
    \end{cases}
\]
satisfies the hinge and non-hinge conditions.

\item If $f_\eta(\phi_{i_0}^+) > w^\mathrm{per}_{i_0}$, then by Lemma \ref{lem:non-hinge_to_hinge} and Corollary \ref{coro:exponential_growth_of_intersection_numbers}, 
we may choose $k$ so that $\phi_{i_0}^+$ is large enough and the sequence defined as 
\[
w'_i:=
    \begin{cases}
    f_\eta(\phi_i^+) & (i<i_0), \\
    w_{i_0}^\mathrm{per} & (i=i_0), \\
    f_{\eta}(\phi_i^+) & (i>i_0),
    \end{cases}
\]
satisfies the hinge and non-hinge conditions.
The sequence $w_i$ defined above is obtained by concatenating $(w'_i)$ and $(w_{i}^\mathrm{per})$ at $i_0$. 
Therefore, Lemma\ref{lem:concatenate_admissible_sequences} implies that
the sequence $w_i$ also satisfies the hinge and non-hinge conditions.
\end{itemize}
If $\eta$ is bigger than $1$, we may apply Lemma \ref{lem:reduce_eta} at most twice to $w_i$, to construct an admissible sequence.
\end{proof}

\bibliographystyle{alpha} 
\bibliography{references} 
\end{document}